\title{Rewriting techniques for relative coherence}
\author[S.~Mimram]{Samuel Mimram\lmcsorcid{0000-0002-0767-2569}}
\address{LIX, CNRS, École polytechnique, Institut Polytechnique de Paris, 91120 Palaiseau, France}
\email{samuel.mimram@polytechnique.edu}
\keywords{coherence, rewriting system, Lawvere theory}
\newcommand{\enumlabel}[1]{#1}
\renewcommand{\TODO}[1]{}
\begin{document}

\begin{abstract}
  A series of works has established rewriting as an essential tool in order to prove coherence properties of algebraic structures, such as MacLane's coherence theorem for monoidal categories, based on the observation that, under reasonable assumptions, confluence diagrams for critical pairs provide the required coherence axioms. We are interested here in extending this approach simultaneously in two directions. Firstly, we want to take into account situations where coherence is partial, in the sense that it only applies to a subset of the structural morphisms. Secondly, we are interested in structures which are cartesian in the sense that variables can be duplicated or erased. We develop theorems and rewriting techniques in order to achieve this, first in the setting of abstract rewriting systems, and then extend them to term rewriting systems, suitably generalized to take coherence into account. As an illustration of our results, we explain how to recover the coherence theorems for monoidal and symmetric monoidal categories.

\end{abstract}

\maketitle


\section{Introduction}
\subsection{Coherence results}
Coherence results are fundamental in category theory. They can be seen both as a way of formally simplifying computations, by ensuring that we can consider strict algebraic structures without loss of generality, and as a guide for generalizing computations, by ensuring that we have correctly generalized algebraic structures in higher dimensions and taken higher-dimensional cells in account. Such results have been obtained for a wide variety of algebraic structures on categories, including monoidal categories~\cite{maclane1998categories,maclane1963natural}, symmetric monoidal categories~\cite{maclane1998categories,maclane1963natural}, braided monoidal categories~\cite{joyal1993braided}, tortile monoidal categories~\cite{shum1994tortile}, symmetric monoidal closed categories~\cite{JAY1990271,kelly1971coherence,voreadou1977coherence}, biclosed monoidal (multi)categories~\cite{lambek69}, compact closed categories~\cite{kelly1980coherence}, cartesian closed categories~\cite{babaev1982coherence,mints1982simple}, rig categories~\cite{laplaza1972coherence}, weakly distributive categories~\cite{blute1996natural}, bicategories and pseudo-functors~\cite{lucas2017coherence}, cartesian closed bicategories~\cite{fiore2021coherence}, tricategories~\cite{gordon1995coherence,gurski2013coherence}, etc.

The coherence results are often quickly summarized as ``all diagrams commute''. However, this is quite misleading~\cite{kelly1972abstract}: firstly, we only want to consider diagrams made of structural morphisms, and secondly, we actually usually want to consider only a subset of those diagrams.
In the general case, the goal is thus to identify a class of diagrams which commute or, better, decide equality between structural morphisms and provide explicit and combinatorial descriptions of those.
One of the goals of this article is to clarify the situation and the relationships between the various approaches.

\subsection{Coherence from rewriting theory}
A field which provides many computational techniques to show that diagrams commutes is rewriting~\cite{baader1999term,bezem2003term}. Namely, when a rewriting system is terminating and locally confluent, which can be verified algorithmically by computing its critical pairs, it is confluent and thus has the Church-Rosser property, which implies that any two zig-zags can be filled by local confluence diagrams. By properly extending the notion of rewriting system with higher-dimensional cells in order to take coherence into account (those cells specifying which confluence diagrams commute), one is then able to show coherence results of the form ``all diagrams commute'' up to the coherence laws.
This idea of extending rewriting theory in order to handle coherence dates back to pioneering work from people such as Newman~\cite{newman42} (who observed that the 1-complex associated to a convergent abstract rewriting system can be made into a simply connected 2-complex by adding 2-cells corresponding to confluence diagrams), Squier~\cite{lafont1995new,squier1994finiteness} (who observed that the confluence diagrams for critical pairs of a convergent monoid presentation generate the full congruence on the monoidal category associated to the rewriting system), or MacLane~\cite{maclane1963natural} (whose original proof for the coherence theorem for monoidal categories is implicitly based on rewriting techniques). Furthermore, the work of Power~\cite{power1989abstract} and Street~\cite{street1996categorical} have reformulated rewriting in categorical terms, and paved the way for higher-dimensional generalizations of rewriting in the context of computads~\cite{street1976limits}, which are also known as polygraphs~\cite{polygraphs,burroni1993higher}, and have been used to recover various coherence theorems~\cite{guiraud2012coherence}, that can often be interpreted as computing (polygraphic) resolutions in suitable settings~\cite{polygraphs}. More recently, these generalizations were also adapted to homotopy type theory~\cite{kraus2021rewriting}.
We would also like to mention here that this trend of work has motivated extensions of rewriting to various 2-dimensional settings, which should certainly be helpful at some point in relation to coherence: monoidal categories~\cite{lafont2003towards}, and hypergraph categories which allow rewriting modulo symmetries or Frobenius structures~\cite{DBLP:journals/jacm/BonchiGKSZ22,DBLP:journals/mscs/BonchiGKSZ22,DBLP:journals/mscs/BonchiGKSZ22a}.

\subsection{Coherence for monoidal categories}
One of the first and most important instance of a coherence theorem is the one for monoidal categories, originally due to Mac Lane. Since it will be used in the following as one of the main illustrations, we begin by recalling it here, and discuss its various possible formulations.

A \emph{monoidal category} consists of a category~$C$ equipped with a tensor bifunctor and unit object respectively noted
\begin{align*}
  \otimes:C\times C&\to C
  &
  e:1&\to C
\end{align*}
together with natural isomorphisms
\begin{align*}
  \alpha_{x,y,z}:(x\otimes y)\otimes z&\to x\otimes(y\otimes z)
  &
  \lambda_x:e\otimes x&\to x
  &
  \rho_x:x\otimes e&\to x
\end{align*}
called \emph{associator} and \emph{left} and \emph{right unitors}, satisfying
two well-known axioms stating that the diagrams
\[
  \begin{tikzcd}[column sep=50]
    ((x\otimes y)\otimes z)\otimes w\ar[dd,"\alpha_{x\otimes y,z,w}"']\ar[r,"\alpha_{x,y,z}\otimes w"]&(x\otimes (y\otimes z))\otimes w\ar[dr,"\alpha_{x,y\otimes z,w}"]\\
    &&x\otimes ((y\otimes z)\otimes w)\ar[d,"x\otimes\alpha_{y,z,w}"]\\
    (x\otimes y)\otimes (z\otimes w)\ar[rr,"\alpha_{x,y,z\otimes w}"]&&x\otimes (y\otimes (z\otimes w))
  \end{tikzcd}
\]
and
\[
  \begin{tikzcd}
    (x\otimes e)\otimes y\ar[dr,"\rho_x\otimes y"']\ar[rr,"\alpha_{x,e,y}"]&&\ar[dl,"x\otimes\rho_y"]x\otimes(e\otimes y)\\
    &x\otimes y
  \end{tikzcd}
\]
commute for any objects $x$, $y$ and $z$ of~$C$.

Thanks to these axioms, the way tensor expressions are bracketed does not really
matter: we can always rebracket expressions using the structural morphisms
$\alpha$, $\lambda$ and $\rho$, and any two ways of rebracketing an expression
into the other are equal. In fact, and this is an important point in this
article, there are various ways to formalize this~\cite{nlab:mon-coh}:
\newcommand{\mctc}{(M1)}
\newcommand{\mcfd}{(M2)}
\newcommand{\mcst}{(M3)}
\newcommand{\mcae}{(M4)}
\begin{enumerate}[(M3)]
\item[\enumlabel{\mctc}] Every diagram in a free monoidal category made up of~$\alpha$, $\lambda$ and $\rho$ commutes\\\cite[Corollary 1.6]{joyal1993braided}, \cite[Theorem VI.2.1]{maclane1998categories}.
\item[\enumlabel{\mcfd}] Every diagram in a monoidal category made up of~$\alpha$, $\lambda$ and $\rho$ commutes\\
  \cite[Theorem 3.1]{maclane1963natural}, \cite[Theorem XI.3.2]{maclane1998categories}.
\item[\enumlabel{\mcst}] Every monoidal category is monoidally equivalent to a strict monoidal category\\
  \cite[Corollary 1.4]{joyal1993braided}, \cite[Theorem XI.3.1]{maclane1998categories}.
\item[\enumlabel{\mcae}] The forgetful 2-functor from strict monoidal categories to monoidal categories has a left adjoint and the components of the unit are equivalences.
\end{enumerate}
Condition \mcfd{} implies \mctc{} as a particular case and the converse implication can also be shown, so that the two are equivalent. Condition \mcae{} implies \mcst{} as a particular case, and it can be shown that \mcst{} in turn implies \mcfd{}, see~\cite[Theorem XI.3.2]{maclane1998categories}.

\subsection{Coherence for symmetric monoidal categories}
\label{smc-def}
Although fundamental, taking the previous example of a coherence theorem as a guiding example can be misleading as it hides the fact that the coherence results are in general more subtle: usually, we do not want all the diagrams made of structural morphisms to commute. In order to illustrate this, let us consider the following variant of monoidal categories.

A \emph{symmetric monoidal category} is a monoidal category equipped with a natural transformation
\[
  \gamma_{x,y}:x\otimes y\to y\otimes x
\]
called \emph{symmetry} such that the diagrams
\[
  \begin{tikzcd}
    &y\otimes x\ar[dr,"\gamma_{y,x}"]&\\
    x\otimes y\ar[ur,"\gamma_{x,y}"]\ar[rr,"\id_{x\otimes y}"']&&x\otimes y
  \end{tikzcd}
  \qquad\qquad
  \begin{tikzcd}
    x\otimes e\ar[dr,"\rho_x"']\ar[rr,"\gamma_{x,e}"]&&\ar[dl,"\lambda_x"]e\otimes x\\
    &x
  \end{tikzcd}
\]
and
\[
  \begin{tikzcd}
    &(y\otimes x)\otimes z\ar[r,"\alpha_{y,x,z}"]&y\otimes(x\otimes z)\ar[dr,"y\otimes\gamma_{x,z}"]\\
    (x\otimes y)\otimes z\ar[ur,"\gamma_{x,y}\otimes z"]\ar[dr,"\alpha_{x,y,z}"']&&&y\otimes(z\otimes x)\\
    &x\otimes(y\otimes z)\ar[r,"\gamma_{x,y\otimes z}"']&(y\otimes z)\otimes x\ar[ur,"\alpha_{y,z,x}"']
  \end{tikzcd}
\]
commute for every objects $x$, $y$ and~$z$ of~$C$.

Analogous coherence theorems as above hold and can be formulated as follows:
\begin{enumerate}[(S3)]
\item[(S1)] Every ``generic'' diagram in a (free) symmetric monoidal category made up of~$\alpha$, $\lambda$, $\rho$ and $\gamma$ commutes.
\item[(S2)] Every diagram in a (free) symmetric monoidal category made up of~$\alpha$, $\lambda$ $\rho$ and $\gamma$ commutes precisely when the two sides have the same underlying symmetry\\
  \cite[Corollary~2.6]{joyal1993braided}, \cite[Theorem~XI.1.1]{maclane1998categories}.
\item[(S3)] Every (free) symmetric monoidal category is symmetric monoidally equivalent to a strict symmetric monoidal category~\cite[Proposition~4.2]{may1974e1},
  \cite[Theorem~2.5]{joyal1993braided}.
\item[(S4)] The forgetful 2-functor from strict symmetric monoidal categories to symmetric monoidal categories has a left adjoint and the components of the unit are equivalences.
\end{enumerate}
We can see above that the formulations do not anymore require that ``all diagrams commute''. In order to illustrate why it has to be so, observe that the diagram
\begin{equation}
  \label{gamma-id}
  \begin{tikzcd}
    x\otimes x\ar[r,bend left,"\gamma_{x,x}"]\ar[r,bend right,"\id_{x\otimes x}"']&x\otimes x
  \end{tikzcd}
\end{equation}
does \emph{not} commute in general in monoidal categories, although its morphisms are structural ones. For a concrete example, consider the category of sets and functions equipped with cartesian product as tensor product and $x$ to be any set with at least two distinct elements~$a$ and~$b$. We namely have
\[
  \gamma_{x,x}(a,b)=(b,a)\neq(a,b)=\id_{x\otimes x}(a,b)
  \text.
\]
However, note that the two morphisms do not have the same ``underlying symmetry'' ($\gamma_{x,x}$ corresponds to a transposition, whereas $\id_{x\otimes x}$ to an identity on a $2$-element set). In fact, as stated in (S2), restricting to diagrams where the two morphisms induce the same symmetry is enough to have them always commute. Another way to ensure that the diagrams should commute is to require them to be \emph{generic} (or \emph{linear}) as in (S1), by which we roughly mean that all the objects occurring in the source (or target) object should be distinct: this is not the case in~\cref{gamma-id} since the source object is $x\otimes x$, in which~$x$ occurs twice. Intuitively, this condition ensures that the underlying symmetry of the morphisms is uniquely determined by the positions of the variables, and thus that the diagram commutes as a particular instance of (S1). The same subtlety is implicitly present in the condition (S3): for a strict symmetric monoidal category, we do not require that we the symmetric should be strict (only the associator and unitors, such a category is sometimes also called a \emph{permutative category}~\cite{may1974e1}).




\subsection{Coherence for more general theories}
In order to work in a framework which is able to handle many algebraic structures at once (monoidal categories, symmetric monoidal categories, etc.), we axiomatize the notion of structure using Lawvere 2-theories, which are Lawvere theories enriched in groupoids. In such a theory~$\T$, the 0-cells encode the arities, the 1-cells the operations, and the 2-cells the coherence morphisms, so that a product preserving 2-functor $\T\to\Cat$, \ie an algebra for the theory, corresponds to an actual algebraic structure. In order to perform computations on such a theory, it is often convenient to use a description of it by the means of generators and relations, possibly with good properties, and we use here the notion of term rewriting system, extended in order to account for relations between coherences morphisms (which correspond to zig-zags of rewriting steps).

For ``fully coherent'' structures (\eg monoidal categories) one wants to show that all diagrams made of 2-cells commute, \ie the Lawvere 2-theory contains at most one 2-cell between two given 1-cells. For more general situations (\eg symmetric monoidal categories), one wants to show coherence results relatively to a subtheory $\W\subseteq\T$ (for symmetric monoidal categories, this would be the one generated by $\alpha$, $\lambda$ and $\rho$). Writing $\T/\W$ for the quotient theory (the theory obtained from~$\T$ by turning all the 2-cells of $\W$ into identities), previous conditions can then formulated in this setting in the following way:
\begin{enumerate}[(C3)]
\item[(C1)] Identify a class of pairs of 1-cells $(f_i,g_i)$ such that the hom-categories $\T/\W(f_i,g_i)$ contain at most one morphism.
\item[(C2)] Provide an ``explicit'' description of the quotient theory $\T/\W$.
\item[(C3)] Show that every algebra of~$\T$ is equivalent to an algebra of~$\T/\W$.
\item[(C4)] Show that the forgetful 2-functor from algebras of~$\T/\W$ to algebras of~$\T$ has a left adjoint and the components of the unit are equivalences.
\end{enumerate}
Variants of the condition (C2) have been considered in the literature such as the problem of deciding the equality of 2-cells in~$\T$ (or, equivalently, in free algebras), or provide explicit descriptions of free algebras. An important point is that, in order for the above conditions to make sense, one should first make sure that the quotient is a faithful representation of the original theory (in the sense that the canonical 2-functor $\T\to\T/\W$ is a local equivalence) which, as we show, is the case if and only if $\W$ is \emph{2-rigid} (\cref{prop:rigid-2equivalence}), \ie has at most one 2-cell between any pair of 1-cells.
We illustrate here coherence conditions (C1) and (C2) in the case of symmetric monoidal categories, where they are respectively proved in \cref{smc-S1,smc-S2}.
Coherence condition (C3) and (C4) are respectively formulated as \cref{conj:strict-equiv,conj:strict-adj} and left for future work.

\subsection{Other forms of coherence}
%
There are other possible formulations of coherence, involving what are called \emph{unbiased} variants of the structures.
In the case of monoidal categories, an \emph{unbiased monoidal category} is a category equipped with $n$-ary tensor products for every natural number~$n$, satisfying suitable axioms~\cite[Section~3.1]{leinster2004higher}. The following variant of \mcae{} can then be shown:

\begin{itemize}[(M4')]
\item[(M4')] The forgetful 2-functor from strict monoidal categories to unbiased monoidal categories has a left adjoint and the components of the unit are equivalences.
\end{itemize}

\noindent
This result is in fact a particular instance of a very general coherence theorem due to Power~\cite{power1989general}, see also~\cite{lack2002codescent,shulman2012not}, which originates in the following observation: there is a $2$-monad~$T$ on~$\Cat$ whose strict algebras are strict monoidal categories and whose pseudo-algebras are unbiased monoidal categories. Given a $2$-monad~$T$ on a $2$-category, under suitable assumptions (which are satisfied in the case of the monad of monoidal categories), it can be shown that the inclusion $T\text{-\textbf{StrAlg}}\to T\text{-\textbf{PsAlg}}$ of $2$-categories, from the $2$-category of strict $T$-algebras (and strict morphisms) to the $2$-category of pseudo-$2$-algebras (and pseudo-morphisms) admits a left $2$-adjoint (which can be interpreted as a strictification $2$-functor) such
that the components of the unit of the adjunction are internal equivalences in $T$-pseudo-algebras.

We do not insist much on this general route, as our main concern here is the
relationship with rewriting, which provides ways of handling biased notions of
algebras.

\subsection{Contents of the paper}
We first investigate, in~\cref{sec:coh-cat}, an abstract version of the situation and formally compare the various coherence theorems: we show that quotienting a theory by a subtheory~$\W$ gives rise to an equivalent theory if and only if~$\W$ is coherent (or \emph{rigid}), in the sense that all diagrams commute (\cref{prop:rigid-equivalence}). Moreover, this is the case if and only if they give rise to equivalent categories of algebras (\cref{prop:alg-str}), which can be thought of as a strengthened version of (C4).
We then provide, in \cref{sec:ars}, rewriting conditions which allow showing coherence in practice (\cref{prop:ars-cr}).

Those results are extended, in~\cref{sec:coh-lt}, to the setting of Lawvere 2-theories,
where we are able to axiomatize (symmetric) monoidal categories. One of the main novelties here consists in allowing for coherence with respect to a subtheory~$\W$, which is required to handle coherence for symmetric monoidal categories.
This leads us to conjecture that, when the subtheory is rigid, we always have coherence for algebras (\cref{conj:strict-equiv,conj:strict-adj}).
%
The associated rewriting tools are developed in~\cref{sec:trs}, based on a coherent extension of term rewriting systems (\cref{2trs}), following~\cite{beke2011categorification,cohen2009coherence,malbos2016homological}. In particular, we provide rewriting tools to show that this theory is rigid (\cref{2-coh-rigid}). We also show in \cref{smc-coh} that these tools apply in the case of the theory of symmetric monoidal categories, and use those to recover one of the classical coherence theorems in this setting (\cref{smoncat-strong-coh}).

\Cref{sec:coh-cat,sec:coh-lt} develop the general categorical setting and can be read independently from \cref{sec:ars,sec:trs,smc-coh}, which specifically develop rewriting tools and applications.

This article is an extended version of~\cite{catcoh}, and also corrects a few mistakes unfortunately present there.

\subsection{Acknowledgments}
The author would like to thank the anonymous reviewers for their comments, which improved the paper.

















\section{Relative coherence for categories}
\label{sec:coh-cat}

\subsection{Quotient of categories}
\label{sec:ars-quot}
Fix a category~$\C$ together with a set~$W$ of isomorphisms of~$\C$. Although the situation is very generic, and the following explanation is only vague for now, it can be helpful to think of~$\C$ as a theory describing a structure a category can possess, and~$W$ as the morphisms we are interested in strictifying. For instance, if we are interested in the coherence theorem for symmetric monoidal categories, we can think of the objects of~$\C$ as formal iterated tensor products, the morphisms of~$\C$ as the structural morphisms (the composites of~$\alpha$, $\lambda$, $\rho$ and $\gamma$), and we would typically take $W$ as consisting of all instances of~$\alpha$, $\lambda$ and $\rho$ (but not $\gamma$). This will be made formal in~\cref{sec:coh-lt}.

\begin{definition}
  A functor $F:\C\to\D$ is \emph{$W$-strict} when it sends every morphism of $W$ to an identity.
\end{definition}

\begin{definition}
  \label{quotient-category}
  The \emph{quotient}~$\C/W$ of~$\C$ under~$W$ is the category equipped with a $W$-strict functor $\C\to\C/W$, such that every $W$-strict functor $F:\C\to\D$ extends uniquely as a functor $\tilde F:\C/W\to\D$ making the following diagram commute:
  \[
    \begin{tikzcd}
      \C\ar[d]\ar[r,"F"]&\D\\
      \C/W\ar[ur,dotted,"\tilde F"']
    \end{tikzcd}
  \]
\end{definition}

\noindent
Such quotient categories always exist~\cite{bednarczyk1999generalized}, and we provide below an explicit construction in nicely behaved cases (\cref{prop:rigid-quotient}).
We write $\W$ for the subcategory of~$\C$ generated by~$W$. This subcategory will be assimilated to the smallest subset of morphisms of~$\C$ which contains~$W$ is closed under compositions and identities. The category~$\W$ is a groupoid and it can shown that passing from~$W$ to~$\W$ does not change the quotient.

\begin{lemma}
  The categories $\C/W$ and $\C/\W$ are isomorphic.
\end{lemma}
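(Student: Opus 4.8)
The plan is to show that $\C/W$ and $\C/\W$ satisfy literally the same universal property, and then invoke the standard uniqueness of objects defined by a universal property. The crux is a single observation: a functor $F\colon\C\to\D$ is $W$-strict if and only if it is $\W$-strict. One direction is trivial since $W\subseteq\W$. For the converse, recall that $\W$ is, by definition, the smallest set of morphisms of~$\C$ containing~$W$ and closed under identities and composition; so I would argue by induction on this generation that $F(g)$ is an identity for every $g\in\W$. If $g\in W$ this is the hypothesis; if $g=\id_x$ then $F(g)=\id_{F x}$; and if $g=g_2\circ g_1$ with $F(g_1)$ and $F(g_2)$ already known to be identities, then — since $g_1$ and $g_2$ are composable, the target of $g_1$ equals the source of $g_2$, hence $F(g_1)$ and $F(g_2)$ are identities at the same object — their composite $F(g_2)\circ F(g_1)=F(g)$ is again that identity. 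Thus every $W$-strict functor is $\W$-strict.

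Given this, the defining property of $\C/W$ in \cref{quotient-category} (it carries a $W$-strict functor $\C\to\C/W$ through which every $W$-strict functor factors uniquely) becomes word-for-word the defining property of $\C/\W$ after replacing ``$W$-strict'' by the equivalent notion ``$\W$-strict''. Hence both $\C/W$ and $\C/\W$ are initial objects of one and the same category — the category whose objects are $W$-strict functors out of~$\C$ and whose morphisms are functors under~$\C$ — and any two initial objects of a category are isomorphic by a unique isomorphism, which here is moreover compatible with the structural functors from~$\C$. This yields the claimed isomorphism $\C/W\cong\C/\W$.

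I do not expect any real obstacle here: the only point requiring a moment's care is the composition case of the induction, where one must note that composability of $g_1$ and $g_2$ forces their images under~$F$ to be identities on the same object before concluding that the composite is an identity. Everything else is a routine application of the universal property. (One could alternatively phrase the whole argument without induction by observing directly that the class of morphisms sent by a $W$-strict~$F$ to identities is closed under identities and composition and contains~$W$, hence contains~$\W$ by minimality; this is the same proof.)
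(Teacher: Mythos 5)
Your proof is correct and follows essentially the same route as the paper's: establish that a functor is $W$-strict if and only if it is $\W$-strict (the nontrivial direction by functoriality, which you spell out as an induction over the generation of~$\W$), then conclude by uniqueness of the universal property. The paper leaves the functoriality argument implicit where you make it explicit, but the substance is identical.
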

\begin{proof}
  By definition of quotient categories (\cref{quotient-category}), it is enough to show that the category $\C/W$ is a quotient of~$\C$ by~$\W$. It follows easily from the fact that a functor $\C\to\D$ is $W$-strict if and only if it is $\W$-strict. Namely, the left-to-right implication follows from functoriality and the right-to-left implication from the inclusion~$W\subseteq\W$.
\end{proof}

\noindent
Thanks to the above lemma, we will be able to assume, without loss of
generality, that we always quotient categories by a subgroupoid which has the
same objects as~$\C$.


We will see that quotients are much better behaved when the groupoid we quotient
by satisfies the following property.

\begin{definition}
  A groupoid~$\W$ is \emph{rigid} when any two morphisms $f,g:x\to y$ which are parallel (\ie have the same source, and have the same target) are necessarily equal.
\end{definition}

\noindent
Such a groupoid can be thought of as a ``coherent'' sub-theory of~$\C$: it does not have non-trivial geometric structure in the sense of \cref{lem:rigid}
below.

We will need to use the following properties of categories.

\begin{definition}
  A category is
  \begin{itemize}
  \item \emph{discrete} when its only morphisms are identities,
  \item \emph{contractible} when it is equivalent to the terminal category,
  \item \emph{connected} when there is a morphism between any two objects,
  \item \emph{propositional} when it is a rigid and connected groupoid.
  \end{itemize}
\end{definition}

\begin{lemma}
  \label{contractible-prop}
  A propositional category with an object is contractible.
\end{lemma}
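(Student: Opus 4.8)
The plan is to exhibit an explicit equivalence between $\W$ and the terminal category $1$. Fix an object $x_0$ of $\W$, which exists by hypothesis. There is a unique functor $F\colon\W\to 1$, and we define $G\colon 1\to\W$ by sending the unique object to $x_0$. Since $1$ is terminal, $FG$ is literally $\id_1$, so all that remains is to produce a natural isomorphism between $GF$ and $\id_\W$.

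For each object $y$ of $\W$, use connectedness to pick a morphism $\eta_y\colon x_0\to y$; since $\W$ is a groupoid, each $\eta_y$ is automatically an isomorphism (and we may take $\eta_{x_0}=\id_{x_0}$). Since the functor $GF$ sends every object to $x_0$ and every morphism to $\id_{x_0}$, the family $(\eta_y)_y$ is a candidate for the components of a natural transformation $\eta\colon GF\Rightarrow\id_\W$.

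The only point to verify is naturality: given $f\colon y\to z$ in $\W$, we must check $\eta_z\circ GF(f)=f\circ\eta_y$, that is $\eta_z=f\circ\eta_y$ as morphisms $x_0\to z$. But $\eta_z$ and $f\circ\eta_y$ are parallel morphisms in the groupoid~$\W$, so rigidity forces them to be equal. Hence $\eta$ is a natural isomorphism, $G$ is a quasi-inverse to $F$, and $\W$ is contractible.

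I expect no genuine obstacle: the argument is essentially the standard fact that a connected groupoid is equivalent to the one-object groupoid on any of its automorphism groups, combined with the observation that rigidity makes that automorphism group trivial. The only mildly delicate point is the use of choice in selecting the family $(\eta_y)_y$, which is harmless in this context.
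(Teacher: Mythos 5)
Your proof is correct and follows essentially the same route as the paper: the paper simply observes that the terminal functor $\W\to 1$ is full (by connectedness), faithful (by rigidity) and surjective on objects, and your explicit construction of the quasi-inverse $G$ and the natural isomorphism $\eta\colon GF\Rightarrow\id_\W$ is exactly the unfolding of the standard proof that a fully faithful, essentially surjective functor is an equivalence. The naturality check via rigidity of parallel morphisms is the right (and only) place where the hypothesis of rigidity enters, matching the paper's use of it for faithfulness.
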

\begin{proof}
  Given a propositional category~$\C$, the terminal functor $\C\to 1$ is full
  (because~$\C$ is connected), faithful (because~$\C$ is rigid) and surjective
  (because~$\C$ has an object).
\end{proof}

\begin{proposition}
  \label{lem:rigid}
  \label{rigid-characterizations}
  Given a groupoid~$\W$, the following are equivalent
  \begin{enumerate}[(i)]
  \item $\W$ is rigid,
  \item $\W$ has identities as only automorphisms,
  \item $\W$ is equivalent to a discrete category,
  \item $\W$ is a coproduct of contractible categories.
  \end{enumerate}
\end{proposition}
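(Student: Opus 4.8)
The plan is to prove the four conditions equivalent by establishing a cycle of implications, say $(i)\Rightarrow(ii)\Rightarrow(iii)\Rightarrow(iv)\Rightarrow(i)$, taking advantage of the fact that $\W$ is a groupoid throughout. The implication $(i)\Rightarrow(ii)$ is immediate: an automorphism $f:x\to x$ is parallel to $\id_x$, so rigidity forces $f=\id_x$. The converse direction $(ii)\Rightarrow(i)$, which will also be useful, follows because if $f,g:x\to y$ are parallel then $g^{-1}f:x\to x$ is an automorphism, hence the identity, whence $f=g$; so in fact $(i)$ and $(ii)$ are interchangeable and I may use whichever is convenient.

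For $(ii)\Rightarrow(iii)$, I would pick a skeleton of $\W$, i.e. choose one object in each isomorphism class (equivalently, each connected component, since $\W$ is a groupoid). The full subcategory $\W_0$ on these chosen objects is equivalent to $\W$ via the inclusion (this is the standard fact that a category is equivalent to any skeleton). It remains to see $\W_0$ is discrete: a morphism of $\W_0$ between distinct objects would be an isomorphism between objects in distinct components, impossible; a morphism from a chosen object to itself is an automorphism, hence an identity by $(ii)$. So $\W_0$ is discrete and $\W\simeq\W_0$.

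For $(iii)\Rightarrow(iv)$: if $\W\simeq D$ with $D$ discrete, write $D$ as the coproduct $\coprod_{d\in D}1$ of its objects. Equivalence is compatible with coproducts of categories, so $\W\simeq\coprod_{d\in D}1$, and each copy of the terminal category is contractible; thus $\W$ is equivalent to — hence, being a groupoid, I should be slightly careful — a coproduct of contractible categories. Here I should note that the statement $(iv)$ presumably means ``equivalent to'' rather than ``isomorphic to'' a coproduct of contractible categories, matching the level of generality of $(iii)$; alternatively, if literal isomorphism is intended, I would instead argue directly that $\W$ itself decomposes as the coproduct of its connected components and invoke \cref{contractible-prop} to see each component is contractible, since each component of a rigid groupoid is a propositional category with an object. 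Finally, $(iv)\Rightarrow(i)$: suppose $\W\simeq\coprod_i\C_i$ with each $\C_i$ contractible, and let $f,g:x\to y$ be parallel in $\W$. Transporting along the equivalence (which is fully faithful), $x$ and $y$ land in the same summand $\C_i$ — a morphism between them exists, so they are in the same component — and $\C_i$, being equivalent to $1$, has at most one morphism between any two objects; hence the images of $f$ and $g$ coincide, and by faithfulness $f=g$.

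The only genuinely delicate point is the precise reading of condition $(iv)$ (equivalence versus isomorphism, and whether ``coproduct of contractible categories'' is taken up to equivalence): once that is pinned down, every step is a routine application of the elementary category theory recalled just before the statement — skeletons, compatibility of equivalences with coproducts, and \cref{contractible-prop}. I expect the write-up to be short, with the main care needed in the $(iii)\Rightarrow(iv)$ and $(iv)\Rightarrow(i)$ steps to make the handling of coproducts of categories under equivalence clean.
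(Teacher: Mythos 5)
Your proof is correct and follows essentially the same route as the paper: (i)$\Leftrightarrow$(ii) by the same two one-liners, (iii) via the decomposition of $\W$ into connected components (you pass to a skeleton, the paper uses the quotient functor onto the discrete category of components --- full because the target is discrete, faithful by rigidity, surjective on objects --- which is the same idea without needing to choose representatives), and (iv) via \cref{contractible-prop}. On your question about reading (iv): the paper's proof establishes the literal statement, namely $\W\isoto\bigsqcup_x F^{-1}x$ where the summands are the (full subcategories on the) fibers of the equivalence to the discrete category, i.e.\ the connected components, each shown contractible; so your ``direct'' alternative is the intended argument. The one wrinkle there is that in your cycle $(iii)\Rightarrow(iv)$ you justify contractibility of each component by saying it is ``a component of a \emph{rigid} groupoid'', but rigidity is (i), which is not yet available from (iii) at that point of the cycle; the paper avoids this by deriving rigidity of each fiber directly from faithfulness of the equivalence into a discrete category (parallel morphisms in a fiber have equal, identity, images). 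Either insert the one-line implication $(iii)\Rightarrow(i)$ --- which is exactly your $(iv)\Rightarrow(i)$ faithfulness argument --- or use the paper's phrasing; with that patch everything goes through.
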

\begin{proof}
  (i) implies (ii). Given a rigid category, any automorphism $f:x\to x$ is parallel with the identity and thus has to be equal to it.
  \\
  (ii) implies (i). Given two parallel morphisms $f,g:x\to y$, we have $g^{-1}\circ f=\id_x$ and thus $f=g$.
  \\
  (i) implies (iii). Write $\D$ for category of connected components of~$\W$: this is the discrete category whose objects are the equivalence classes~$[x]$ of objects~$x$ of~$\W$ under the equivalence relation identifying $x$ and $y$ whenever there is a morphism $f:x\to y$ in~$\W$. The quotient functor $Q:\W\to\D$ is full because $\D$ is discrete, faithful because $\W$ is rigid, and surjective on objects by construction of~$\D$. It is thus an equivalence of categories.
  \\
  (iii) implies (i). Given an equivalence $F:\W\to\D$ to a discrete category~$\D$, any two parallel morphisms~$f,g:x\to y$ have the same image $Ff=Fg$ (which is an identity) because~$\D$ is discrete, and are thus equal because~$F$ is faithful.
  \\
  (iii) implies (iv). Consider a functor $F:\W\to\D$ which is an equivalence with $\D$ discrete. Given $x\in\D$, we write $F^{-1}x$ for the full subcategory of~$\W$ whose objects are sent to~$x$ by~$F$. Since~$\D$ is discrete, for any morphism $f:x\to y$ in $\D$, we have $Fx=Fy$, from which it follows that $\W\isoto\bigsqcup_{x\in D}F^{-1}x$. Since $\D$ is discrete, each $F^{-1}x$ is non-empty, connected and rigid and thus contractible by \cref{contractible-prop}.
  \\
  (iv) implies (iii). If $\W\isoto\bigsqcup_{i\in I}\W_i$ with $\W_i$ contractible, \ie $\W_i\equivto 1$, then $\W\equivto\bigsqcup_{i\in I}1$ because equivalences are closed under coproducts and thus $\W$ is equivalent to a discrete category.
\end{proof}

\noindent
The fact that $\W\subseteq\C$ is rigid can be thought of here as the fact that coherence condition (C1) holds for~$\C$, relatively to~$\W$: any two parallel structural morphisms are equal. Conditions~(iii) and (iv) can also be interpreted as stating that $\W$ is a set, up to equivalence.

General notions of quotients (with respect to a subcategory, or to a general notion of congruence both on objects and morphisms) have been developed in~\cite{bednarczyk1999generalized}, and are non-trivial to study and construct. However, when quotienting a category~$\C$ by a rigid subgroupoid~$\W$, we have the following simple description. In this case, we define the two following equivalence relations $\sim_\W$, that we often simply write~$\sim$.
\begin{itemize}
\item We write~$\sim_\W$ for the equivalence relation on objects of~$\C$ such that $x\sim y$ whenever there is a morphism $f:x\to y$ in~$\W$. When it exists, such a morphism is unique by rigidity of~$\W$ and noted~$w_{x,y}:x\to y$.
\item We also write~$\sim_\W$ for the equivalence relation on morphisms of~$\C$ such that for $f:x\to y$ and $f':x'\to y'$ we have $f\sim f'$ whenever there exists morphisms $v:x\to x'$ and $w:y\to y'$ in~$\W$ making the following diagram commute:
  \[
    \begin{tikzcd}
      x\ar[d,dashed,"v"']\ar[r,"f"]&y\ar[d,dashed,"w"]\\
      x'\ar[r,"f'"']&y'
    \end{tikzcd}
  \]
\end{itemize}

\begin{definition}
  \label{rigid-quotient-category}
  Let~$\C$ be a category equipped with a rigid subgroupoid~$\W$. We write~$\C/{\sim_{\W}}$ for the category where
  \begin{itemize}
  \item an object~$[x]$ is an equivalence class of an object~$x$ of~$\C$ under~$\sim$,
  \item a morphism $[f]:[x]\to[y]$ is the equivalence class of a morphism $f:x\to y$ in~$\C$ under~$\sim$,
  \item given morphisms $f:x\to y$ and $g:y'\to z$ with $[y]=[y']$, their composition is the morphism $[g]\circ[f]=[g\circ w_{y,y'}\circ f]$:
    \[
      \begin{tikzcd}
        x\ar[r,"f"]&y\ar[r,dashed,"w_{y,y'}"]&y'\ar[r,"g"]&z
      \end{tikzcd}
    \]
  \item the identity on an object~$[x]$ is~$[\id_x]$.
  \end{itemize}
\end{definition}

\noindent
Note that there is a canonical functor $[-]:\C\to\C/{\sim_\W}$ sending (\resp a morphism) to its equivalence class.

\begin{lemma}
  The above category is well-defined.
\end{lemma}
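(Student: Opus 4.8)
The plan is to verify the three data of \cref{rigid-quotient-category} --- composition, identities, and the category axioms --- are independent of the choices of representatives, and then check associativity and unitality. Throughout, the essential tool is the \emph{rigidity} of~$\W$: whenever two morphisms of~$\W$ are parallel they coincide, and in particular the canonical morphisms $w_{x,y}:x\to y$ satisfy $w_{y,z}\circ w_{x,y}=w_{x,z}$ and $w_{x,x}=\id_x$ (both sides are morphisms of~$\W$ between the same objects). I would record these identities first, since every subsequent computation reduces to an application of them.

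First I would check that composition is well-defined. Suppose $f:x\to y$ and $g:y'\to z$ with $[y]=[y']$, and let $f':\bar x\to\bar y$, $g':\bar y'\to\bar z$ be alternative representatives, so $f\sim f'$ via $v:x\to\bar x$, $w:y\to\bar y$ in~$\W$, and $g\sim g'$ via $v':y'\to\bar y'$, $w':z\to\bar z$. I must produce morphisms of~$\W$ exhibiting $g\circ w_{y,y'}\circ f \sim g'\circ w_{\bar y,\bar y'}\circ f'$; the natural candidates are $v:x\to\bar x$ on the source and $w':z\to\bar z$ on the target. The required square commutes because, after pasting the two given commuting squares and inserting $w_{y,y'}$ and $w_{\bar y,\bar y'}$, the only thing to check is an equality between two morphisms of~$\W$ of the form $\bar x\to\bar y\to\bar y'$ (or going the other way around), which holds by rigidity. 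I would also note the degenerate consistency point: when $[y]=[y']$ but $y=y'$, one has $w_{y,y'}=w_{y,y}=\id_y$, so the formula specializes to ordinary composition $[g]\circ[f]=[g\circ f]$, and more generally the class $[g]\circ[f]$ does not depend on which representatives of $[y]=[y']$ one picks to form the composite (replacing $(f,y)$ by $(w_{y,y''}\circ f, y'')$ changes nothing, again by the cocycle identity for the $w$'s).

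Next, well-definedness of identities is immediate: $[\id_x]$ depends only on $[x]$ because if $x\sim x'$ then the square with $f=\id_x$, $f'=\id_{x'}$, $v=w=w_{x,x'}$ commutes by rigidity, so $[\id_x]=[\id_{x'}]$. Then I would verify the unit laws: $[\id_y]\circ[f]=[\id_y\circ w_{y,y}\circ f]=[f]$ and similarly on the other side, using $w_{y,y}=\id_y$. Associativity is the last point: given $f:x\to y$, $g:y'\to z$, $h:z'\to w$ with $[y]=[y']$ and $[z]=[z']$, both $([h]\circ[g])\circ[f]$ and $[h]\circ([g]\circ[f])$ unwind (using $[z\text{-part}]$) to the class of a composite $h\circ w_{z,?}\circ g\circ w_{y,?}\circ f$, and the two resulting composites differ only by a rearrangement of the interpolating $\W$-morphisms, which are equal by rigidity; hence the two classes agree.

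I do not expect any genuine obstacle here --- the lemma is a routine verification --- but the one place requiring slight care is tracking the source and target objects through the composition formula when the representatives of the common class $[y]=[y']$ are changed, since the formula $[g]\circ[f]=[g\circ w_{y,y'}\circ f]$ is phrased in terms of specific representatives $y$ and $y'$ rather than the class. The clean way to handle this is to prove once and for all the cocycle identities $w_{y,z}\circ w_{x,y}=w_{x,z}$ and $w_{x,x}=\id_x$ at the outset, after which every identity above becomes a one-line consequence; so the real content of the proof is simply the systematic use of rigidity of~$\W$.
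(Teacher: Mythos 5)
Your proof is correct and follows essentially the same route as the paper: the key step in both is that the middle square comparing $w_{y_1',y_2'}\circ w_{y_1,y_1'}$ with $w_{y_2,y_2'}\circ w_{y_1,y_2}$ commutes because both are parallel morphisms of~$\W$, hence equal by rigidity (your ``cocycle identities'' are just this observation packaged up front). The paper dismisses associativity and unitality as immediate from the corresponding properties in~$\C$, where you spell them out, but the substance is identical.
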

\begin{proof}
  We can check that the operations are well-defined and axioms of categories are satisfied.
  \begin{itemize}
  \item Composition is compatible with the equivalence relation. Given $f_1:x_1\to y_1$, $f_2:x_2\to y_2$, $g_1:y_1'\to z_1$, $g_2:y_2'\to z_2$ such that $f_1\sim f_2$ and $g_1\sim g_2$ (and thus $x_1\sim x_2$, $y_1\sim y_2$, $y_1'\sim y_2'$ and $z_1\sim z_2$) which are composable (\ie $y_1\sim y_1'$ and $y_2\sim y_2'$), the following diagram shows that $[g_1]\circ[f_1]=[g_2]\circ[f_2]$:
    \[
      \begin{tikzcd}
        x_1\ar[d,dashed,"w_{x_1,x_2}"']\ar[r,"f_1"]&y_1\ar[d,dashed,"w_{y_1,y_2}"description]\ar[r,dashed,"w_{y_1,y_1'}"]&\ar[d,dashed,"w_{y_1',y_2'}"description]y_1'\ar[r,"g_1"]&z_1\ar[d,dashed,"w_{z_1,z_2}"]\\
        x_2\ar[r,"f_2"']&y_2\ar[r,dashed,"w_{y_2,y_2'}"']&y_2'\ar[r,"g_2"']&z_2\\
      \end{tikzcd}
    \]
    where the squares on the left and right respectively commute because $f_1\sim f_2$ and $g_1\sim g_2$ and the one in the middle does by rigidity of~$\W$.
  \item Identities are compatible with the equivalence relations. Given objects~$x$ and~$y$ of~$\C$ such that $x\sim y$, the diagram
    \[
      \begin{tikzcd}
        x\ar[d,dotted,"w_{x,y}"']\ar[r,"\id_x"]&x\ar[d,dotted,"w_{x,y}"]\\
        y\ar[r,"\id_{y}"']&y
      \end{tikzcd}
    \]
    commutes showing that we have $\id_x\sim\id_y$.
  \end{itemize}
  Associativity of composition and the fact that identities are neutral element for composition follow immediately from the fact that those properties are satisfied in~$\C$.  
\end{proof}

\begin{proposition}
  \label{prop:rigid-quotient}
  The category $\C/\W$ is isomorphic to $\C/{\sim_\W}$.
\end{proposition}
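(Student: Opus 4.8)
The plan is to verify that $\C/{\sim_\W}$, equipped with the canonical functor $[-]\colon\C\to\C/{\sim_\W}$ introduced after \cref{rigid-quotient-category}, satisfies the universal property characterizing $\C/\W$ in \cref{quotient-category}; since an object defined by such a universal property is unique up to unique isomorphism, this yields the claimed isomorphism. (As already observed, a functor is $W$-strict if and only if it is $\W$-strict, so it is harmless to phrase the universal property in terms of~$\W$.)

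First I would check that $[-]$ is $\W$-strict: a morphism $w\colon x\to y$ of~$\W$ fits into the evident commutative square with $v=\id_x$ exhibiting $w\sim_\W\id_x$, so $[w]=[\id_x]=\id_{[x]}$. Then, given a $\W$-strict functor $F\colon\C\to\D$, I would define $\tilde F\colon\C/{\sim_\W}\to\D$ by $\tilde F[x]=Fx$ and $\tilde F[f]=Ff$, and verify that this is well defined: if $x\sim_\W y$ then $F(w_{x,y})$ is an identity, forcing $Fx=Fy$; and if $f\sim_\W f'$ via a square with vertical edges $v,w\in\W$, applying $F$ and using that $Fv,Fw$ are identities gives $Ff=Ff'$. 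Functoriality of $\tilde F$ is clear on identities, and on composites it follows from $F(w_{y,y'})=\id$ that $\tilde F([g]\circ[f])=F(g\circ w_{y,y'}\circ f)=Fg\circ Ff=\tilde F[g]\circ\tilde F[f]$; and $\tilde F\circ[-]=F$ by construction. Finally, uniqueness holds because $[-]$ is surjective on objects and every morphism of $\C/{\sim_\W}$ has the form $[f]$, so any $G$ with $G\circ[-]=F$ must agree with $\tilde F$ on objects and morphisms.

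I do not anticipate a real obstacle: the proof is bookkeeping, with the only mildly delicate points being the two well-definedness checks for $\tilde F$. These succeed precisely because a $\W$-strict functor sends all the auxiliary data used to build $\C/{\sim_\W}$ — the witnessing squares for $\sim_\W$ and the bridging isomorphisms $w_{y,y'}$ in the definition of composition — to identities, so that the apparently extra choices made in \cref{rigid-quotient-category} are invisible to $\tilde F$.
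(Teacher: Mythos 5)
Your proposal is correct and follows the same route as the paper: verify that $\C/{\sim_\W}$ with the canonical functor $[-]$ satisfies the universal property of \cref{quotient-category}, checking $\W$-strictness of $[-]$ (your square exhibiting $w\sim_\W\id_x$ uses $w^{-1}\in\W$, which is fine since $\W$ is a groupoid) and that any $\W$-strict $F$ factors uniquely because it collapses the witnessing squares and bridging morphisms to identities. The paper's proof is just a terser version of the same argument.
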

\begin{proof}
  We show that the category has the universal property of \cref{quotient-category}. The quotient functor $[-]:\C\to\C/\W$ is $\W$-strict, \ie for any morphism $w:x\to y$, we have $[w]=[\id_y]$:
  \[
    \begin{tikzcd}
      x\ar[d,dashed,"w"']\ar[r,"w"]&y\ar[d,dashed,"\id_y"]\\
      y\ar[r,"\id_y"']&y
    \end{tikzcd}
  \]
  Moreover, a $\W$-strict functor $F:\C\to\D$ induces a unique functor $\tilde F:\C/\W\to\D$. Namely, by $\W$-strictness, two objects (\resp morphisms) which are equivalent have the same image by~$F$.
\end{proof}

When $\W\subseteq\C$ is not rigid, we can have a similar description of the
quotient, but the description is more complicated. Namely, if we are trying to
compose two morphisms $[f]$ and $[g]$ in the quotient with $f:x\to y$ and
$g:y'\to z$, we might have multiple morphisms $y\to y'$ in~$\W$ (say $v$ and
$w$),
\[
  \begin{tikzcd}
    x\ar[r,"f"]&y\ar[r,dashed,shift left,"v"]\ar[r,dashed,shift right,"w"']&y'\ar[r,"g"]&z
  \end{tikzcd}
\]
In such a situation, the compositions $g\circ v\circ f$ and $g\circ w\circ f$ should be identified in the quotient.
This observation suggests that the construction of the quotient category~$\C/\W$, when~$\W$ is not rigid, is better described in two steps: we first formally make~$\W$ rigid, and then apply \cref{prop:rigid-quotient}.

\begin{definition}
  A functor $F:\C\to\D$ is \emph{$\W$-rigid} when for any parallel morphisms
  $f,g:x\to y$ of~$\C$ we have $Ff=Fg$.
\end{definition}

\begin{definition}
  The \emph{$\W$-rigidification}~$\rquot\C\W$ of~$\C$ is the category equipped
  with a $\W$-rigid functor $\C\to\rquot\C\W$, such that any $\W$-rigid functor
  $F:\C\to\D$ extends uniquely as a functor $\tilde F:\rquot\C\W\to\D$ making the
  following diagram commute:
  \[
    \begin{tikzcd}
      \C\ar[d]\ar[r,"F"]&\D\\
      \rquot\C\W\ar[ur,dotted,"\tilde F"']
    \end{tikzcd}
  \]
\end{definition}

\begin{lemma}
  \label{lem:rigidification}
  The category~$\rquot\C\W$ is the category obtained from~$\C$ by quotienting
  morphisms under the smallest congruence (\wrt composition) identifying any two
  parallel morphisms of~$\W$.
\end{lemma}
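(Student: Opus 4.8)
The plan is to show that the explicitly described category has the universal property characterising the $\W$-rigidification. Write $\approx$ for the smallest congruence on the morphisms of~$\C$ with respect to composition such that $f\approx g$ whenever $f$ and $g$ are parallel morphisms of~$\W$; this exists as the intersection of all such congruences, and one notes that every such congruence relates only parallel morphisms (the generating pairs are parallel and composition preserves parallelism), so that quotienting~$\C$ by it again yields a category. Let $\C'$ be this quotient: it has the same objects as~$\C$, its morphisms are the $\approx$-classes $[f]$ of morphisms of~$\C$, and composition and identities are induced from those of~$\C$. There is a canonical quotient functor $P\colon\C\to\C'$, the identity on objects, sending $f$ to~$[f]$; by construction of~$\approx$ it identifies any two parallel morphisms of~$\W$, hence $P$ is $\W$-rigid.

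I would then check the universal property. Let $F\colon\C\to\D$ be an arbitrary $\W$-rigid functor. Since $P$ is the identity on objects and is surjective on morphisms, the requirement $\tilde F\circ P=F$ forces $\tilde F$ to send the object~$x$ to~$Fx$ and the morphism~$[f]$ to~$Ff$; this already yields uniqueness of~$\tilde F$. It remains to see that this is well-defined, \ie that $f\approx g$ implies $Ff=Fg$. Consider the relation $R$ on the morphisms of~$\C$ defined by $f\mathrel{R}g$ iff $f$ and $g$ are parallel and $Ff=Fg$. Functoriality of~$F$ makes $R$ a congruence with respect to composition: if $f\mathrel{R}f'$ and $g\mathrel{R}g'$ with $g\circ f$ defined, then $F(g\circ f)=Fg\circ Ff=Fg'\circ Ff'=F(g'\circ f')$; and since $F$ is $\W$-rigid, $R$ contains every pair of parallel morphisms of~$\W$. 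By minimality of~$\approx$, we conclude $\approx\,\subseteq\,R$, so $f\approx g$ implies $Ff=Fg$ and $\tilde F$ is well-defined. It is a functor because $F$ is one and the structure of~$\C'$ is induced from that of~$\C$, and $\tilde F\circ P=F$ holds by construction.

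Since both $\rquot\C\W$ and~$\C'$ come equipped with a $\W$-rigid functor out of~$\C$ satisfying the same universal property, they are canonically isomorphic, which proves the lemma. The one genuinely delicate point here is the well-definedness step, namely recognising $R$ as a composition-congruence containing the generating pairs so that $\approx\subseteq R$; checking that $\C'$ is a category and that $P$ and~$\tilde F$ are functors is routine, and entirely parallel to the well-definedness argument given above for~$\C/{\sim_\W}$.
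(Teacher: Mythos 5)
Your proof is correct. The paper actually states this lemma without any proof at all, treating it as routine, so there is nothing to compare against except the implicit standard argument — which is exactly what you supply: form the quotient by the generated congruence, observe the projection is $\W$-rigid, get uniqueness of $\tilde F$ from surjectivity of the projection, and get existence from the minimality of the congruence applied to the kernel congruence $R$ of a $\W$-rigid $F$. One small imprecision worth fixing: the parenthetical claim that \emph{every} congruence containing the generating pairs relates only parallel morphisms is false as stated (the total relation is a counterexample); the correct justification is that parallelism is itself a composition-congruence containing the generating pairs, so the \emph{smallest} such congruence is contained in it. This does not affect the rest of the argument.
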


\begin{proposition}
  \label{prop:quotient-2steps}
  The quotient $\C/\W$ is isomorphic to $(\rquot\C\W)/\tilde \W$ where
  $\tilde \W$ is the set of equivalence classes of morphisms in~$\W$ under the
  equivalence relation of \cref{lem:rigidification}.
\end{proposition}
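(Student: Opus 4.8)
The strategy is to verify that $(\rquot\C\W)/\tilde\W$ satisfies the universal property of $\C/\W$ from \cref{quotient-category}, using the two universal properties we already have at our disposal. First I would set up the canonical functor: compose the rigidification functor $R:\C\to\rquot\C\W$ with the quotient functor $Q:\rquot\C\W\to(\rquot\C\W)/\tilde\W$ to get $P = Q\circ R:\C\to(\rquot\C\W)/\tilde\W$. The key preliminary observation is that $P$ is $W$-strict: for $w:x\to y$ in $W$, the morphism $Rw$ is parallel in $\rquot\C\W$ to $R(w_{x,y})$ for any other element, and more to the point, since $\rquot\C\W$ identifies all parallel morphisms of (the image of) $\W$, the class $[Rw]$ in $\tilde\W$ is an element of $\tilde\W$, so $Q$ sends it to an identity. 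Hence $P$ factors through $\C/\W$, or rather, $P$ is $W$-strict and so we must show it is \emph{the} universal such functor.

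Next I would check the universal property directly. Let $F:\C\to\D$ be any $W$-strict functor; I must produce a unique $\bar F:(\rquot\C\W)/\tilde\W\to\D$ with $\bar F\circ P = F$. Since $F$ is $W$-strict it is $\W$-strict, hence it sends every morphism of $\W$ to an identity, so in particular any two parallel morphisms of $\W$ have the same image under $F$ — that is, $F$ is $\W$-rigid. By the universal property of $\rquot\C\W$, there is a unique $F':\rquot\C\W\to\D$ with $F'\circ R = F$. Now $F'$ is $\tilde\W$-strict: a morphism in $\tilde\W$ is the class of some $w\in\W$, and $F'$ sends $Rw$ to $Fw = \id$. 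By \cref{prop:rigid-quotient} applied to $\rquot\C\W$ with its rigid subgroupoid $\tilde\W$ (it is rigid by construction of the rigidification — all parallel morphisms of $\W$ have been identified, so $\tilde\W$ has at most one morphism between parallel objects), there is a unique $\bar F:(\rquot\C\W)/\tilde\W\to\D$ with $\bar F\circ Q = F'$, whence $\bar F\circ P = \bar F\circ Q\circ R = F'\circ R = F$. Uniqueness of $\bar F$ follows by chaining the two uniqueness statements: given $\bar G$ with $\bar G\circ P = F$, the composite $\bar G\circ Q$ is a $\W$-rigid-induced factorization of $F$ through $R$, so $\bar G\circ Q = F'$ by uniqueness for $\rquot\C\W$, and then $\bar G = \bar F$ by uniqueness for $(\rquot\C\W)/\tilde\W$.

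Two points need care and are the only real content. First, one must confirm that $\tilde\W$ is genuinely a \emph{rigid} subgroupoid of $\rquot\C\W$: it is a groupoid because $\W$ is (inverses and identities survive the quotient), and it is rigid precisely because \cref{lem:rigidification} describes $\rquot\C\W$ as quotienting by the \emph{smallest} congruence identifying parallel morphisms of $\W$, so after rigidification any two parallel morphisms coming from $\W$ coincide — this is exactly what licenses the application of \cref{prop:rigid-quotient}. Second, one should check that $\tilde\W$ as defined (equivalence classes of morphisms of $\W$) really is a set of morphisms of $\rquot\C\W$ in the sense required, i.e. that the congruence of \cref{lem:rigidification} restricts sensibly to $\W$; this is routine since the congruence only ever identifies parallel morphisms. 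The main obstacle, such as it is, is bookkeeping: making sure the two universal properties are invoked in the right order and that $W$-strictness, $\W$-strictness, $\W$-rigidity, and $\tilde\W$-strictness are not conflated — once these are kept straight, the proof is a formal diagram chase with no computation.
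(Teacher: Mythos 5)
Your proof is correct and follows essentially the same route as the paper's: the paper's (much terser) argument likewise chains the two universal properties, observing that a $W$-strict functor is $\W$-rigid, hence factors uniquely through $\rquot\C\W$, and that the resulting functor is $\tilde\W$-strict, hence factors uniquely through $(\rquot\C\W)/\tilde\W$. Your additional checks (that $\tilde\W$ is a rigid subgroupoid of $\rquot\C\W$, and that the composite $Q\circ R$ is $W$-strict) are correct and fill in details the paper leaves implicit.
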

\begin{proof}
  Since any $\W$-strict functors are $\W$-rigid, we have that any $\W$-strict
  functor extends as a unique functor $\tilde F:\rquot\C\W\to\D$ which is
  $\W$-rigid, and thus as a unique functor $\C/\tilde\W\to\D$:
  \[
    \begin{tikzcd}[baseline={([yshift=5pt]current bounding box.south)}]
      \C\ar[d]\ar[r,"F"]&\D\\
      \rquot\C\W\ar[d]\ar[ur,dotted,"\tilde F"']\\
      \C/\tilde\W\ar[uur,bend right,dotted,"\tilde{\tilde F}"']
    \end{tikzcd} \qedhere
  \]
\end{proof}

\noindent
A consequence of the preceding explicit description of the quotient is the
following:

\begin{lemma}
  \label{lem:quotient-surj}
  The quotient functor $\C\to\C/\W$ is surjective on objects and full.
\end{lemma}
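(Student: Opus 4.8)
The plan is to use the explicit two-step description of the quotient established just above, namely \cref{prop:quotient-2steps} together with \cref{prop:rigid-quotient}. First I would reduce to the rigid case: since $\C/\W \cong (\rquot\C\W)/\tilde\W$ and the passage $\C \to \rquot\C\W$ only quotients morphisms (leaving objects untouched, by \cref{lem:rigidification}), it suffices to prove the statement when $\W$ is already rigid, i.e.\ for the canonical functor $[-]\colon\C\to\C/{\sim_\W}$ of \cref{rigid-quotient-category}. (One should also note that the composite of two surjective-on-objects full functors is again surjective on objects and full, so the reduction is legitimate.)

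For the rigid case, surjectivity on objects is immediate: by construction every object of $\C/{\sim_\W}$ is an equivalence class $[x]$ of an object $x$ of~$\C$, so $[-]$ hits it. Fullness is almost as direct: a morphism $[x]\to[y]$ in $\C/{\sim_\W}$ is, by definition, an equivalence class $[f]$ of some morphism $f\colon x'\to y'$ of~$\C$ with $x'\sim x$ and $y'\sim y$; then $[f] = [-](f)$ is in the image of the functor on morphisms, which is exactly fullness. So in the rigid case there is essentially nothing to compute — the explicit construction was designed to make this transparent.

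The only mild subtlety — and the closest thing to an obstacle — is bookkeeping in the reduction step: one must be slightly careful that ``$[f]\colon[x]\to[y]$'' really does mean a class of a morphism whose source and target lie in the classes of $x$ and $y$ respectively (not merely a class that happens to be parallel in the quotient), and that fullness is asked with respect to the quotient functor's action on the stated objects. Both are built into \cref{rigid-quotient-category}. For the rigidification step, I would just invoke \cref{lem:rigidification}: since $\rquot\C\W$ has the same objects as~$\C$ and the same morphisms up to a congruence, the functor $\C\to\rquot\C\W$ is bijective on objects and surjective on morphisms, hence a fortiori surjective on objects and full. Composing with the rigid-case functor finishes the proof.
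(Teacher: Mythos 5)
Your proof is correct and follows essentially the same route as the paper: decompose the quotient functor as $\C\to\rquot\C\W\to\C/\W$ via \cref{prop:quotient-2steps}, observe the first functor is bijective on objects and full by \cref{lem:rigidification}, and read off surjectivity on objects and fullness of the second from the explicit description in \cref{prop:rigid-quotient}. The extra bookkeeping you supply (closure of these properties under composition, and the meaning of $[f]:[x]\to[y]$) is accurate and only makes explicit what the paper leaves implicit.
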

\begin{proof}
  By \cref{prop:quotient-2steps}, the quotient functor is the composite of the
  quotient functors $\C\to\rquot\C\W\to\C/\W$. The first one is surjective on
  objects and full by \cref{lem:rigidification} and the second one is surjective
  on objects and full by \cref{prop:rigid-quotient}.
\end{proof}

\noindent
This entails the following theorem, which is the main result of the section. Its meaning can be explained by taking the point of view given above: thinking of the category~$\C$ as describing a structure, and of~$\W$ as the part of the structure we want to strictify, the structure is equivalent to its strict variant if and only if the quotiented structure does not itself bear non-trivial geometry, in the sense of \cref{lem:rigid}.

\begin{theorem}
  \label{prop:rigid-equivalence}
  \label{rigid-equivalence}
  Suppose that $\W$ is a subgroupoid of~$\C$. The quotient functor
  $[-]:\C\to\C/\W$ is an equivalence of categories if and only if~$\W$ is rigid.
\end{theorem}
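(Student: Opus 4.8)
The plan is to prove both implications by exploiting the explicit description of the quotient furnished by \cref{prop:rigid-quotient}, namely $\C/\W\cong\C/{\sim_\W}$, together with the characterizations of rigidity in \cref{rigid-characterizations}. Throughout, I will use that the quotient functor $[-]\colon\C\to\C/\W$ is always surjective on objects and full (\cref{lem:quotient-surj}), so that in both directions the only thing at stake is \emph{faithfulness} of $[-]$: an equivalence that is surjective on objects and full is automatically an equivalence iff it is faithful.

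For the ``if'' direction, assume $\W$ is rigid. By \cref{prop:rigid-quotient} we may work with $\C/{\sim_\W}$, whose morphisms are $\sim_\W$-classes of morphisms of~$\C$. It remains to check that $[-]$ is faithful, i.e.\ that if $f,g\colon x\to y$ are parallel morphisms of~$\C$ with $[f]=[g]$, then $f=g$. Unfolding the definition of $\sim_\W$, $[f]=[g]$ means there are $v\colon x\to x$ and $w\colon y\to y$ in~$\W$ with $w\circ f = g\circ v$. But $v$ and $w$ are automorphisms in the rigid groupoid~$\W$, hence $v=\id_x$ and $w=\id_y$ by \cref{rigid-characterizations}(ii), so $f = g$. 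Combined with fullness and surjectivity on objects from \cref{lem:quotient-surj}, this shows $[-]$ is an equivalence.

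For the ``only if'' direction, assume $[-]\colon\C\to\C/\W$ is an equivalence, and suppose $w_1,w_2\colon x\to y$ are parallel morphisms in~$\W$; we must show $w_1=w_2$. Since $\W$ is a subgroupoid, $w_1,w_2$ are morphisms of~$\C$, and since $[-]$ is $\W$-strict we have $[w_1]=[\id_y]=[w_2]$ (more precisely, $[w_1]$ and $[w_2]$ are both the image of an identity under the construction; in the $\C/{\sim_\W}$ picture, $w_1\sim_\W\id_y\sim_\W w_2$ via the witnessing square with sides $w_1$, $\id_y$ on the vertical, or directly since every morphism of~$\W$ is sent to an identity). As $[-]$ is an equivalence it is faithful, so $w_1=w_2$. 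Hence any two parallel morphisms of~$\W$ coincide, which is exactly rigidity of~$\W$.

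The only genuinely delicate point is bookkeeping around the ``only if'' direction: one must be careful that $w_1,w_2$ parallel \emph{in~$\W$} land in the same hom-set of~$\C$ and that $\W$-strictness really collapses both to the same morphism of $\C/\W$ before invoking faithfulness. Everything else is a direct application of the already-established explicit quotient description and the equivalent formulations of rigidity; no termination- or confluence-style argument is needed here, since the heavy lifting was done in \cref{prop:rigid-quotient,lem:quotient-surj,rigid-characterizations}.
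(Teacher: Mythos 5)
Your proof is correct and follows essentially the same route as the paper: reduce everything to faithfulness via \cref{lem:quotient-surj}, use the explicit description of \cref{prop:rigid-quotient} for the ``if'' direction, and deduce $[w_1]=[w_2]$ for parallel morphisms of~$\W$ in the ``only if'' direction (the paper gets this from \cref{lem:rigidification,prop:quotient-2steps}, while your appeal to $\W$-strictness of the quotient functor is a slightly more direct way to the same equality). Just drop the parenthetical ``in the $\C/{\sim_\W}$ picture'' remark in the only-if direction, since that description is only available once rigidity is known.
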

\begin{proof}
  Since the quotient functor is always surjective and full by
  \cref{lem:quotient-surj}, it remains to show that it is faithful if and only
  if~$\W$ is rigid.
  Suppose that the quotient functor is faithful. Given $w,w':x\to y$ in~$\W$, by
  \cref{lem:rigidification,prop:quotient-2steps} we have $[w]=[w']$ and thus
  $w=w'$ by faithfulness.
  Suppose that~$\W$ is rigid. The category~$\C/\W$ then admits the description
  given in \cref{prop:rigid-quotient}. Given $f,g:x\to y$ in $\C$ such that
  $[f]=[g]$, there is $v:x\to x$ and $w:y\to y$ such that $w\circ f=g\circ
  v$. By rigidity, both $v$ and $w$ are identities and thus $f=g$.
\end{proof}

\begin{example}
  \label{ex:rigid-equivalence}
  As a simple example, consider the groupoid~$\C$ freely generated by the graph
  \[
    \begin{tikzcd}
      x\ar[r,shift left,"f"]\ar[r,shift right,"g"']&y 
    \end{tikzcd}
  \]
  The subgroupoid generated by $W=\set{g}$ is rigid, so that~$\C$ is equivalent to the quotient category $\C/W$, which is the groupoid generated by the graph
  \[
    \begin{tikzcd}
      x\ar[loop right,"f"]
    \end{tikzcd}
  \]
  However, the groupoid generated by $W=\set{f,g}$ is not rigid (since we do not have $f=g$). In this case, $\C$ is not equivalent to the quotient category $\C/W$, which is the terminal category.
\end{example}


\begin{remark}
  By taking~$\C$ to be~$\W$ in \cref{rigid-equivalence}, we obtain that a
  category $\W$ is rigid if and only if it is equivalent to the discrete
  category of its connected components. This thus provides an alternative proof
  of condition (iii) of \cref{rigid-characterizations}.
\end{remark}


\subsection{Coherence for algebras}
%
%

Given two categories $\C$ and $\D$, an \emph{algebra} of~$\C$ in~$\D$ is a
functor from $\C$ to $\D$. In the following, we will mostly be interested in the
case where~$\D=\Cat$: if we think of the category~$\C$ as describing an
algebraic structure (\eg the one of monoidal categories), an algebra can be
thought of as a category actually possessing this structure (an actual monoidal
category).
We write $\Alg(\C,\D)$ for the category whose objects are algebras and morphisms
are natural transformations, and $\Alg(\C)=\Alg(\C,\Cat)$. Note that any functor
\[
  F:\C\to\C'
\]
induces, by precomposition, a functor
\[
  \Alg(F,\D):\Alg(\C',\D)\to\Alg(\C,\D)
  \text.
\]
We can characterize situations where two categories give rise to the same algebras as follows.

\begin{proposition}
  \label{prop:equiv-alg}
  A functor $F:\C\to\C'$ is an equivalence if and only if there is a family of equivalences of categories $\Alg(\C,\D)\equivto\Alg(\C',\D)$ which is natural in~$\D$.
  %
  %
\end{proposition}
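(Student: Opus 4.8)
The plan is to prove both directions of the equivalence, with the forward direction being essentially a $2$-categorical functoriality argument and the backward direction being a Yoneda-style argument.

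For the forward direction, suppose $F:\C\to\C'$ is an equivalence of categories. Then precomposition with $F$ gives a functor $\Alg(F,\D):\Alg(\C',\D)\to\Alg(\C,\D)$ for each $\D$, and I claim this is an equivalence. Indeed, $F$ being an equivalence means there is a functor $G:\C'\to\C$ together with natural isomorphisms $GF\cong\id_\C$ and $FG\cong\id_{\C'}$. Precomposition is a $2$-functor $(-)\circ(=):\Cat^{\mathrm{op}}\to\CAT$ (in the sense that it sends functors to functors and natural transformations to natural transformations, compatibly with composition), so it sends the equivalence datum $(F,G,\eta,\varepsilon)$ to an equivalence datum $(\Alg(F,\D),\Alg(G,\D),\ldots)$ witnessing that $\Alg(F,\D)$ is an equivalence. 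Naturality in $\D$: given a functor $H:\D\to\D'$, postcomposition with $H$ commutes strictly with precomposition by $F$, so the square relating $\Alg(F,\D)$ and $\Alg(F,\D')$ via $\Alg(\C,H)$ and $\Alg(\C',H)$ commutes on the nose; taking the equivalences to be $\Alg(F,\D)$ themselves (or their chosen pseudo-inverses) gives the required natural family. Strictly speaking one should check the naturality square commutes up to the coherence isomorphisms if one uses pseudo-inverses, but choosing the family to be $\{\Alg(F,\D)\}_\D$ directly makes it strictly natural.

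For the backward direction, suppose we are given a family of equivalences $\Phi_\D:\Alg(\C,\D)\equivto\Alg(\C',\D)$ natural in $\D$. The idea is to apply this at well-chosen test categories and invoke a representability/Yoneda argument. Taking $\D=\C$, the identity functor $\id_\C\in\Alg(\C,\C)$ is sent by $\Phi_\C^{-1}$ (a pseudo-inverse of $\Phi_\C$) — actually, better: consider $\Phi_\C(\id_\C)=:G'\in\Alg(\C',\C)$, i.e.\ a functor $G':\C'\to\C$, and consider $\Phi_{\C'}^{-1}(\id_{\C'})=:F'\in\Alg(\C,\C')$, i.e.\ a functor $F':\C\to\C'$. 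Using naturality of $\Phi$ with respect to the functors $F'$ and $G'$ themselves (and the fact that each $\Phi_\D$ is natural in $\D$, which lets us transport along these), one computes that $G'\circ F'\cong\id_\C$ and $F'\circ G'\cong\id_{\C'}$, exhibiting $F'$ as an equivalence; the concluding step is then to identify $F'$ with a functor naturally isomorphic to the given $F$ — but in fact the statement only asserts the existence of \emph{some} equivalence, so it suffices to produce $F'$. Concretely: naturality of $\Phi_{-}$ in $\D$ applied along $F':\C\to\C'$ gives a commuting (up to iso) square $\Phi_{\C'}\circ\Alg(\C,F')\cong\Alg(\C',F')\circ\Phi_\C$; evaluating both sides at $\id_\C\in\Alg(\C,\C)$ and using $\Phi_\C(\id_\C)=G'$ and the definition of $F'$ yields $F'\cong F'\circ G'\circ F'$, and a symmetric computation with $G'$ gives the other triangle after cancellation.

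The main obstacle I expect is bookkeeping the coherence of the natural family $\{\Phi_\D\}$: "natural in $\D$" must be made precise (a pseudo-natural transformation of $2$-functors $\Cat\to\CAT$, with invertible naturality $2$-cells satisfying the usual coherence), and the backward direction genuinely uses these invertible $2$-cells to slide $\Phi$ past $F'$ and $G'$. Getting the directions of the coherence isomorphisms right, and checking that the composites of $2$-cells one writes down are indeed the ones needed to conclude $F'G'F'\cong F'$ (hence, by a standard argument with a fully faithful functor or by cancellation in an equivalence, $G'F'\cong\id$), is where the care is needed; the category-theoretic content beyond that is routine.
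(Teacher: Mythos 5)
Your proof is correct and follows essentially the same route as the paper's ``pedestrian'' proof (the paper also records a second proof via the bicategorical Yoneda lemma): precomposition gives the strictly natural family in the forward direction, and in the backward direction one evaluates the given family at the identities of $\C$ and $\C'$ and uses the naturality squares to exhibit the two resulting functors as pseudo-inverse to each other. One small slip: evaluating $\Alg(\C',F')\circ\Phi_\C\cong\Phi_{\C'}\circ\Alg(\C,F')$ at $\id_\C$ yields $F'\circ G'\cong\Phi_{\C'}(F')\cong\id_{\C'}$ directly rather than $F'\cong F'\circ G'\circ F'$, and the symmetric square along $G'$ gives $\Phi_\C(G'\circ F')\cong G'\cong\Phi_\C(\id_\C)$, whence $G'\circ F'\cong\id_\C$ by full faithfulness of the equivalence $\Phi_\C$ --- so your cancellation step is legitimate and, pleasantly, your variant only needs naturality of the family $\Phi$ in one direction, whereas the paper's displayed computation implicitly also uses naturality of the reverse family $\Psi$.
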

\begin{proof}
  \newcommand{\K}{\mathcal{K}}
  Given a $2$-category~$\K$, one can define a Yoneda functor
  \begin{align*}
    Y_\K:\K^\op&\to[\K,\Cat]\\
    c&\mapsto\K(c,-)
  \end{align*}
  where $\Cat$ is the $2$-category of categories, functors and natural
  transformations, and~$[\K,\Cat]$ denotes the 2-category of 2-functors
  $\K\to\Cat$, transformations and modifications. In particular, given $0$-cells
  $c\in\K^\op$ and $d\in\K$, we have $Y_\K cd=\K(c,d)$. The Yoneda lemma
  states that this functor is a local isomorphism (this is a particular case of
  the Yoneda lemma for bicategories~\cite[Corollary 8.3.13]{johnson20212}): this
  means that, for every objects $c,d\in\K^\op$, we have an isomorphism of
  categories
  \[
    [\K,\Cat](Y_\K c,Y_\K d)
    \isoto
    \K(d,c)
    \text.
  \]
  In particular, taking $\K=\Cat$ (and ignoring size issues, see below for a way
  to properly handle this), the Yoneda functor sends a category $\C\in\Cat^\op$
  to $Y_\Cat\C$, \ie $\Cat(\C,-)$, \ie $\Alg(\C,-)$.
  Given category~$\C$ and $\C'$, by the Yoneda lemma, we thus have an
  isomorphism of categories
  \[
    [\Cat,\Cat](\Alg(\C,-),\Alg(\C',-))
    \isoto
    \Cat(\C',\C)
  \]
  which is compatible with $0$-composition in~$\Cat$. The categories~$\C$ and
  $\C'$ are thus equivalent, if and only if the categories $\Alg(\C,-)$ and
  $\Alg(\C',-)$ are equivalent, which is the case if and only if there is a
  family of equivalences of categories between $\Alg(\C,\D)$ and $\Alg(\C,\D)$
  natural in~$\D$.
  %
  %
\end{proof}

\begin{proof}[Alternative proof of \cref{prop:equiv-alg}]
  We provide here an alternative more pedestrian proof, inspired of~\cite[Lemma~5.3.1]{gaussent2015coherent}, which does not require ignoring size issues.
  Suppose that $F$ is an equivalence of categories, with pseudo-inverse
  $G:\C'\to\C$,
  \ie we have $G\circ F\isoto\Id_\C$ and $F\circ G\isoto\Id_{\C'}$.
  We define functors
  \begin{align*}
    \Alg(F,\D):\Alg(\C',\D)&\to\Alg(\C,\D)&
    \Alg(G,\D):\Alg(\C,\D)&\to\Alg(\C',\D)
    \\
    A&\mapsto A\circ F
    &
    A&\mapsto A\circ G
  \end{align*}
  Those induce an equivalence between $\Alg(\C,\D)$ and $\Alg(\C',\D)$ since,
  for $A\in\Alg(\C,\D)$ and $A'\in\Alg(\C',\D)$, we have
  \begin{align*}
    \Alg(G,\D)\circ\Alg(F,\D)&=A'\circ F\circ G\simeq A'
    \\
    \Alg(F,\D)\circ\Alg(G,\D)&=A\circ G\circ F\simeq A
  \end{align*}
  Moreover, the family of equivalences $\Alg(F,\D)$ is natural in~$\D$, and
  similarly for $\Alg(G,\D)$. Namely, given $H:\D\to\D'$ and considering the
  functor
  \begin{align*}
    \Alg(\C,H):\Alg(\C,\D)&\to\Alg(\C,\D')\\
    A&\mapsto H\circ A
  \end{align*}
  as well as the variant with $\C'$ instead of~$\C$, the diagram
  \[
    \begin{tikzcd}[column sep=large]
      \Alg(\C',\D)\ar[d,"{\Alg(F,\D)}"']\ar[r,"{\Alg(\C',H)}"]&\Alg(\C',\D')\ar[d,"{\Alg(F,\D')}"]\\
      \Alg(\C,\D)\ar[r,"{\Alg(\C,H)}"']&\Alg(\C,\D')
    \end{tikzcd}
  \]
  commutes: an object $A\in\Alg(\C',\D)$ is sent to $H\circ A\circ F$ by both
  sides.

  Conversely, suppose given an equivalence of categories
  \[
    \Phi_\D:\Alg(\C',\D)\leftrightarrow\Alg(\C,\D):\Psi_\D
  \]
  which is natural in~$\D$. We define
  \begin{align*}
    F=\Phi_{\C'}(\Id_{\C'}):\C&\to\C'
    &
    G=\Psi_{\C}(\Id_\C):\C'&\to\C
  \end{align*}
  and we have
  \[
    G\circ F
    =
    G\circ\Phi_{\C'}(\Id_{\C'})
    =
    \Phi_{\C'}(G\circ\Id_{\C'})
    =
    \Phi_{\C'}(\Psi_{\C}(\Id_\C))
    \isoto
    \Id_{\C'}
  \]
  (the second equality is naturality), and similarly for $G\circ F\isoto\Id_\C$.
\end{proof}

\begin{remark}
  We would like to point out a subtle point with respect to naturality in the above theorem. Given a functor $F:\C\to\C'$, it is always the case that the induced functors~$\Alg(F,\D)$ form a family which is natural in~$\D$. Suppose moreover that all the functors~$\Alg(F,\D)$ are equivalences. We do not see any argument to show that the pseudo-inverse functors form a natural family, which is why we have to additionally impose this condition.
\end{remark}

\noindent
As a particular application, we have the following proposition, which can be interpreted as the equivalence of coherence conditions (C1) and a strengthened variant of (C4):

\begin{proposition}
  \label{prop:alg-str}
  Let~$\C$ be a category together with a subgroupoid~$\W$. We have, for any category~$\D$, a functor
  \begin{equation}
    \label{quotient-precomposition}
    F_\D:\Alg(\C/\W,\D)\to\Alg(\C,\D)
  \end{equation}
  induced by precomposition with the quotient functor $\C\to\C/\W$. These functors $(F_\D)_{\D\in\Cat}$ form a family of equivalences of categories, natural in~$\D$, if and only if~$\W$ is rigid.
\end{proposition}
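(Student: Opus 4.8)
The plan is to reduce this statement to the combination of \cref{prop:equiv-alg} and \cref{prop:rigid-equivalence}. By \cref{prop:equiv-alg} applied to the quotient functor $Q\colon\C\to\C/\W$, the family $(F_\D)_{\D\in\Cat}$ of functors induced by precomposition with $Q$ consists of equivalences of categories, natural in~$\D$, if and only if $Q$ itself is an equivalence of categories. Indeed, the functor $F_\D$ is precisely $\Alg(Q,\D)$ in the notation introduced before \cref{prop:equiv-alg}, so the hypothesis ``$(F_\D)_\D$ is a natural family of equivalences'' is literally the right-hand condition of \cref{prop:equiv-alg} for $F=Q$. Then \cref{prop:rigid-equivalence} tells us that $Q$ is an equivalence of categories if and only if $\W$ is rigid, which closes the chain of equivalences.

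The steps, in order, would be: first, observe that the precomposition functor \eqref{quotient-precomposition} is exactly $\Alg(Q,\D)$ where $Q\colon\C\to\C/\W$ is the quotient functor, so that the family $(F_\D)_\D$ is the canonical natural family associated to $Q$; second, invoke \cref{prop:equiv-alg} to get that $(F_\D)_\D$ is a natural family of equivalences iff $Q$ is an equivalence; third, invoke \cref{prop:rigid-equivalence} to get that $Q$ is an equivalence iff $\W$ is rigid; fourth, conclude by transitivity.

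One point that deserves a little care — and which I expect to be the only genuine subtlety — is the direction in \cref{prop:equiv-alg}. That proposition is stated as ``$F$ is an equivalence iff there \emph{exists} a natural family of equivalences $\Alg(\C,\D)\equivto\Alg(\C',\D)$'', and the associated remark warns that, even when every $\Alg(F,\D)$ is an equivalence, one must separately know that the pseudo-inverses assemble naturally. Here, however, we are not merely asserting the existence of \emph{some} natural family of equivalences; we are asserting that \emph{this specific} family $(F_\D)_\D = (\Alg(Q,\D))_\D$ consists of equivalences. So the argument to run is: if $\W$ is rigid, then $Q$ is an equivalence by \cref{prop:rigid-equivalence}, hence by the forward direction of \cref{prop:equiv-alg} (more precisely, by its proof, which constructs the natural family as $\Alg(Q,\D)$ together with $\Alg(G,\D)$ for a pseudo-inverse $G$ of $Q$) the family $(F_\D)_\D$ is a natural family of equivalences; conversely, if $(F_\D)_\D$ is a natural family of equivalences, then in particular there exists such a family, so by the backward direction of \cref{prop:equiv-alg} the functor $Q$ is an equivalence, and then $\W$ is rigid by \cref{prop:rigid-equivalence}.

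The only non-bookkeeping obstacle is therefore making sure the ``forward'' direction uses the proof of \cref{prop:equiv-alg} (which exhibits $\Alg(Q,-)$ itself as the equivalence, with pseudo-inverse $\Alg(G,-)$) rather than just its statement; once that is spelled out, the remaining content is purely the transitivity of ``if and only if''.
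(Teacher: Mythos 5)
Your proof is correct and follows exactly the paper's own argument, which combines \cref{prop:rigid-equivalence} with \cref{prop:equiv-alg} in a single line. Your extra care about which direction of \cref{prop:equiv-alg} is being used (and the fact that the forward direction's proof exhibits $\Alg(Q,-)$ itself as the equivalence) is a valid and slightly more explicit treatment of a point the paper leaves implicit.
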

\begin{proof}
  By \cref{prop:rigid-equivalence}, $\W$ is rigid if and only if the quotient
  functor $\C\to\C/\W$ is an equivalence, and we conclude by
  \cref{prop:equiv-alg}.
\end{proof}


\begin{remark}
  \TODO{comprendre pourquoi ça fonctionne dans le cas des théories de Lawvere
    (cf le papier de Hyland et Power)}
  It can be wondered whether the case where $\D=\Cat$ is enough, \ie whether the
  quotient functor $\C\to\C/\W$ is an equivalence whenever the functor
  $\Alg(\C/W)\to\Alg(\C)$ it induces is an equivalence.
  We leave it as an open question, but remark here that it cannot follow from
  general results: it is not the case that two categories $\C$ and~$\C'$ are
  equivalent whenever $\Alg(\C)$ and $\Alg(\C')$ are equivalent. It is namely
  known that two Cauchy equivalent categories give rise to the same algebras
  in~$\Cat$, see~\cite{lack2022flat}, so that the categories
  \begin{align*}
    \C
    &=
    \begin{tikzcd}
      x\ar[loop right,"e"]
    \end{tikzcd}
    &
    \C'
    &=
    \begin{tikzcd}[ampersand replacement=\&]
      x\ar[r,"f"]\&y
    \end{tikzcd}
  \end{align*}
  where~$e\circ e=e$ have the same algebras.
\end{remark}









\section{Coherent abstract rewriting systems}
\label{sec:ars}
\label{sec:coh-ars}
Previous sections illustrate the importance of the property of being rigid for a groupoid, and we now provide tools to show this in practice, based on tools originating from rewriting theory. In the same way the theory of rewriting can be studied ``abstractly''~\cite{baader1999term,huet1980confluent,bezem2003term}, \ie without taking in consideration the structure of the objects being rewritten, we first develop the coherence theorems of interest in an abstract setting.

\subsection{Extended abstract rewriting systems}
The categorical formalization of the notion of rewriting system given here is based on the notion of polygraph~\cite{polygraphs,burroni1993higher,street1976limits}.

\begin{definition}
  An \emph{abstract rewriting system}, or \ARS{}, or \emph{$1$-polygraph} is a
  diagram
  \[
    \begin{tikzcd}
      \P_0
      &
      \ar[l,"s_0"',shift right]
      \ar[l,"t_0",shift left]
      \P_1
    \end{tikzcd}
  \]
  in the category~$\Set$.
\end{definition}

\noindent
An \ARS{} is simply another name for a directed graph. It consists of a set
$\P_0$ whose elements are the \emph{objects} of interest, a set $\P_1$ of
\emph{rewriting rules} and two functions $s_0$ and $t_0$ respectively
associating to a rewriting rule its \emph{source} and \emph{target}. We often
write
\[
  a:x\to y
\]
to denote a rewriting rule~$a$ with $s_0(a)=x$ and $t_0(a)=y$.
We write $\freecat\P_1$ for the set of \emph{rewriting paths} in the \ARS{}: its
elements are (possibly empty) finite sequences~$a_1,\ldots,a_n$ of rewriting
steps, which are composable in the sense that $t_0(a_i)=s_0(a_{i+1})$ for
$1\leq i<n$. Writing $x_i=t_0(a_i)$, such a path can thus be represented as
\[
  \begin{tikzcd}
    x_0\ar[r,"a_1"]&
    x_1\ar[r,"a_2"]&
    \cdots
    \ar[r,"a_n"]
    &x_n
    \text.
  \end{tikzcd}
\]
The source and target of such a rewriting path are respectively $x_0=s_0(a_1)$ and $x_n=t_0(a_n)$, and $n$ is called the \emph{length} of the path. We sometimes write
\[
  p:x\pathto y
\]
to indicate that~$p$ is a rewriting path with $x$ as source and~$y$ as target. Given two composable paths $p:x\pathto y$ and $q:y\pathto z$, we write $p\pcomp q:x\pathto z$ for their concatenation.


A \emph{morphism} $f:\P\to\Q$ of \ARS{} is a pair of functions $f_0:\P_0\to\Q_0$
and $f_1:\P_1\to\Q_1$ such that $s_0\circ f_1=f_0\circ s_0$ and
$t_0\circ f_1=f_0\circ t_0$:
\[
  \begin{tikzcd}
    \P_0
    \ar[d,"f_0"']
    &
    \ar[l,"s_0"',shift right]
    \ar[l,"t_0",shift left]
    \P_1
    \ar[d,"f_1"]
    \\
    \Q_0
    &
    \ar[l,"s_0"',shift right]
    \ar[l,"t_0",shift left]
    \Q_1
  \end{tikzcd}
\]
We write~$\nPol1$ for the resulting category or \ARS{} and their
morphisms. There is a forgetful functor $\Cat\to\nPol1$, sending a category~$C$
to the \ARS{} whose objects are those of~$C$ and whose rewriting steps are the
morphisms of~$C$.

\begin{lemma}
  The forgetful functor $\Cat\to\nPol1$ admits a left adjoint
  $\freecat-:\nPol1\to\Cat$. It sends an \ARS{} to the category with~$\P_0$ as
  objects and~$\freecat\P_1$ as morphisms, where composition is given by
  concatenation of paths and identities are the empty paths
\end{lemma}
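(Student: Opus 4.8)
The plan is to recognize this as the free-category-on-a-graph construction and verify the adjunction directly via its universal property. First I would check that $\freecat\P$ as described is indeed a category: its morphisms are the finite composable sequences of rewriting steps, composition is concatenation $\pcomp$, which is defined exactly when target and source match, and identities are the empty paths. Associativity of $\pcomp$ and neutrality of the empty paths are immediate from the corresponding facts about concatenation of finite sequences. On arrows, an \ARS{} morphism $f:\P\to\Q$ is sent to the assignment $\freecat f$ acting as $f_0$ on objects and sending a path $a_1,\ldots,a_n$ to $f_1(a_1),\ldots,f_1(a_n)$; this sequence is composable in $\Q$ precisely because $f$ commutes with $s_0$ and $t_0$, and the assignment visibly preserves concatenation and empty paths, hence is a functor. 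Since $\freecat-$ acts componentwise on morphisms, it is itself functorial.

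Next I would exhibit the unit. Write $U:\Cat\to\nPol1$ for the forgetful functor. There is an \ARS{} morphism $\eta_\P:\P\to U\freecat\P$ which is the identity on objects and sends a rewriting rule $a:x\to y$ to the length-one path $a:x\pathto y$; naturality in~$\P$ is immediate. The heart of the argument is then the universal property: given a category~$C$ and an \ARS{} morphism $g:\P\to UC$, I claim there is a unique functor $\bar g:\freecat\P\to C$ with $U\bar g\circ\eta_\P=g$. For existence, let $\bar g$ agree with $g_0$ on objects, send the empty path on~$x$ to $\id_{g_0(x)}$, and send a path $a_1,\ldots,a_n:x\pathto y$ to the composite $g_1(a_n)\circ\cdots\circ g_1(a_1)$ in~$C$; functoriality holds because $\bar g(p\pcomp q)=\bar g(q)\circ\bar g(p)$ follows from associativity of composition in~$C$, and the triangle $U\bar g\circ\eta_\P=g$ holds by construction. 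For uniqueness, any functor $h:\freecat\P\to C$ with $Uh\circ\eta_\P=g$ must send each object~$x$ to $g_0(x)$ and each generating (length-one) path~$a$ to $g_1(a)$; since every path is the concatenation of its length-one subpaths, functoriality of~$h$ forces $h=\bar g$ on all paths, by induction on length. This yields a bijection $\Cat(\freecat\P,C)\cong\nPol1(\P,UC)$ natural in~$\P$ and~$C$, which is the desired adjunction.

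There is no serious obstacle here; the construction and all verifications are routine inductions on path length. The only point requiring care is the bookkeeping of composition order: with the applicative convention for~$\circ$ used in the paper, a path must be sent to the composite of the images of its steps in the \emph{reverse} order, and once this is fixed everything goes through mechanically.
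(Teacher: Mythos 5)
Your proof is correct. It carries out in full the route the paper only gestures at: the paper's proof says the fact ``is easily checked directly'' and then, rather than doing the check, gives an abstract existence argument --- $\Cat$ and $\nPol{1}$ are categories of models of projective sketches, the forgetful functor is induced by a morphism of sketches, and such functors always admit left adjoints. Your verification via the unit $\eta_\P$ and the unique extension $\bar g$ is the elementary counterpart of that: it is longer but self-contained, and it has the advantage of actually establishing the \emph{explicit description} of $\freecat-$ asserted in the lemma (objects $\P_0$, morphisms the composable sequences, composition by concatenation), which the sketch-theoretic argument only delivers up to unique isomorphism. The paper's argument, conversely, buys generality: the same one-line reasoning handles the groupoid variant $\freegpd-$ and other forgetful functors appearing later without redoing any induction on path length. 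Your remark about the reversal of composition order under the applicative convention is the one genuine point of care, and you handle it correctly.
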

\begin{proof}
  This fact is easily checked directly, but an abstract argument for the existence of the left adjoint in such situations is the following: the categories $\Cat$ and $\nPol1$ are models of projectives sketches and the forgetful functor $\Cat\to\nPol1$ is induced by a functor of sketches (the ``inclusion'' of the sketch of \ARS{} into the sketch of categories) and, as such, it admits a left adjoint~\cite[Theorem~4.1]{barr2000toposes}, \cite[Proposition~15.1.3]{polygraphs}.
\end{proof}

\noindent
As a variant of the preceding situation, we can consider the forgetful functor $\Gpd\to\nPol1$, from the category of groupoids. It also admits a left adjoint $\freegpd-:\nPol1\to\Gpd$, which can be described as follows. Given an \ARS{}~$\P$, we write $\P^\pm$ for the \ARS{}
\[
  \begin{tikzcd}
    \P_0
    &
    \ar[l,"s_0"',shift right]
    \ar[l,"t_0",shift left]
    (\P_1\times\set{-,+})
  \end{tikzcd}
\]
Its objects are the same as for~$\P$. A rule in~$\P^\pm_1$ is a pair $(a,\epsilon)$ consisting of a rewriting rule $a\in\P_1$ and $\epsilon\in\set{-,+}$, which will be noted~$a^\epsilon$ in the following. The source and target
maps are given by
\begin{align*}
  s_0(a^+)&=s_0(a)
  &
  t_0(a^+)&=t_0(a)
  &
  s_0(a^-)&=t_0(a)
  &
  t_0(a^-)&=s_0(a)
\end{align*}
We can think of $a^+$ as corresponding to $a$ and $a^-$ as corresponding to $a$
taken ``backward''. A \emph{rewriting zig-zag} is a path
$a_1^{\epsilon_1},\ldots,a_n^{\epsilon_n}$ in $\P^\pm$. The intuition is that a
zig-zag is a ``non-directed'' rewriting path, consisting of rewriting steps,
some of which are taken backward. We write
\[
  p:x\zzto y
\]
to indicate that $p$ is a zig-zag from~$x$ to~$y$.
Two zig-zags are \emph{congruent} when they are related by the smallest
congruence~$\sim$ such that, for every rewriting rule $a:x\to y$, we have
\begin{align}
  \label{zig-zag-congruence}
  a^+a^-&\sim\id_x
  &
  a^-a^+&\sim\id_y
\end{align}
We write $\freegpd\P_1$ for the set of zig-zags up to congruence.

\begin{lemma}
  The category~$\freegpd\P$ with $\P_0$ as objects, $\freegpd\P_1$ as morphisms,
  where composition is given by concatenation of paths up to congruence, is the
  free groupoid on~$\P$.
\end{lemma}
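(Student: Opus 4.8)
We must show that $\freegpd\P$, the category with $\P_0$ as objects and congruence classes of zig-zags as morphisms, is the free groupoid on the \ARS{}~$\P$, i.e.\ that it satisfies the universal property of the left adjoint to the forgetful functor $\Gpd\to\nPol1$.

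Let me think about how to structure this.

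**The plan.**

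The approach is the standard one for free structures: first check that $\freegpd\P$ is a well-defined groupoid, then exhibit a morphism of \ARS{} $\eta_\P : \P \to U(\freegpd\P)$ (the unit), and finally verify the universal property: any morphism of \ARS{} $f : \P \to U(G)$ into (the underlying \ARS{} of) a groupoid $G$ factors uniquely through $\eta_\P$ as $\bar f : \freegpd\P \to G$ in $\Gpd$.

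Step 1 (well-definedness). Concatenation of zig-zags is associative on the nose and has the empty zig-zags as units; since the congruence $\sim$ of~\eqref{zig-zag-congruence} is a congruence with respect to concatenation by definition, these operations descend to congruence classes, so $\freegpd\P_1$ forms a category. For the groupoid structure, define the inverse of a zig-zag $a_1^{\epsilon_1}\cdots a_n^{\epsilon_n}$ to be $a_n^{-\epsilon_n}\cdots a_1^{-\epsilon_1}$ (where $-(+) = -$ and $-(-) = +$). One checks, by induction on $n$ using the relations~\eqref{zig-zag-congruence}, that this is a two-sided inverse up to congruence; this too is compatible with $\sim$. Hence $\freegpd\P$ is a groupoid. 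The unit $\eta_\P$ sends an object to itself and a rewriting rule $a$ to the class of the length-one zig-zag $a^+$; this is visibly a morphism of \ARS{}.

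Step 2 (existence of the extension). Given $f : \P \to U(G)$ with $G$ a groupoid, define $\bar f$ on objects by $\bar f(x) = f_0(x)$, and on a zig-zag by $\bar f(a_1^{\epsilon_1}\cdots a_n^{\epsilon_n}) = f_1(a_1)^{\epsilon_1}\circ\cdots\circ f_1(a_n)^{\epsilon_n}$, where $f_1(a)^+ := f_1(a)$ and $f_1(a)^- := f_1(a)^{-1}$ (using that $G$ is a groupoid). This is well-defined on congruence classes because the defining relations $a^+a^- \sim \id$ and $a^-a^+ \sim \id$ are sent to $f_1(a)\circ f_1(a)^{-1} = \id$ and $f_1(a)^{-1}\circ f_1(a) = \id$ in~$G$, and $\sim$ is generated by these. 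It is a functor by construction (it turns concatenation into composition and empty zig-zags into identities), it preserves inverses since $G$ is a groupoid, and $U(\bar f)\circ\eta_\P = f$ holds by inspection on generators.

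Step 3 (uniqueness). Suppose $\bar g : \freegpd\P \to G$ also satisfies $U(\bar g)\circ\eta_\P = f$. Then $\bar g$ and $\bar f$ agree on objects and on the classes of length-one zig-zags $a^+$. Since $\bar g$ is a functor between groupoids it preserves composition and inverses, hence also agrees on $a^- = (a^+)^{-1}$ and, by induction on length, on every zig-zag class. So $\bar g = \bar f$. This establishes the universal property, and thus $\freegpd-$ is left adjoint to $U : \Gpd \to \nPol1$ with $\freegpd\P$ as claimed.

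**Main obstacle.** None of the steps is deep; the only point requiring genuine care is the well-definedness of $\bar f$ on congruence classes in Step~2 --- one must be slightly careful that $\sim$ is defined as the smallest \emph{congruence} containing the relations~\eqref{zig-zag-congruence}, so that checking the generating relations are respected (together with functoriality, which handles compatibility with concatenation and with the ``context'' in which a relation is applied) genuinely suffices. Everything else is a routine verification, most of which the paper would be entitled to leave to the reader.
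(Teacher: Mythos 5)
Your proof is correct. The paper in fact states this lemma without proof, so there is nothing to compare it against directly; the closest analogue is the preceding lemma on the free \emph{category} $\freecat\P$, which the paper justifies by an abstract argument (both $\Gpd$ and $\nPol1$ are models of projective sketches, so the forgetful functor has a left adjoint). That abstract argument also applies here, but it only yields \emph{existence} of the free groupoid; your direct verification additionally establishes that the concrete description by zig-zags up to the congruence~$\sim$ is the right one, which is what the lemma actually asserts and what the paper uses afterwards. Your identification of the one genuinely non-trivial point --- well-definedness of $\bar f$ on congruence classes, which reduces to checking the generating relations because $\sim$ is the smallest congruence containing them --- is exactly right. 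One cosmetic remark: in Step~2 you write $\bar f(a_1^{\epsilon_1}\cdots a_n^{\epsilon_n}) = f_1(a_1)^{\epsilon_1}\circ\cdots\circ f_1(a_n)^{\epsilon_n}$, which reads the composite in diagrammatic order while using the symbol $\circ$; with the usual convention for $\circ$ the factors should appear in the reverse order, but this does not affect the argument.
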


\noindent
We have a canonical function $i_1:\P_1\to\freegpd\P_1$, sending a rewriting step $a$ to $a^+$. Writing $\freegpd s_0,\freegpd t_0:\freegpd\P_1\to\P_0$ for the source and target maps, it induces a morphism of \ARS{} by taking the identity on objects:
\[
  \begin{tikzcd}
    \P_0
    \ar[d,"\id"']
    &
    \ar[l,"s_0"',shift right]
    \ar[l,"t_0",shift left]
    \P_1
    \ar[d,"i_1"]
    \\
    \P_0
    &
    \ar[l,"\freegpd s_0"',shift right]
    \ar[l,"\freegpd t_0",shift left]
    \freegpd\P_1
  \end{tikzcd}
\]
Writing $i:\P\to\freegpd\P$ for this morphism of \ARS{}, the universal property
of $\P$ states that any morphism of \ARS{} $F:\P\to\C$, where $\C$ is a groupoid,
extends uniquely as a functor $\tilde F:\freegpd\P\to\C$ making the following
diagram commute:
\[
  \begin{tikzcd}
    \P\ar[d,"i"']\ar[r,"F"]&\C\\
    \freegpd\P\ar[ur,dotted,"\tilde F"']
  \end{tikzcd}
\]

In the following, in order to avoid working with equivalence classes when working with elements of $\freegpd\P_1$, we will instead only implicitly consider zig-zags $a_1^{\epsilon_1},\ldots,a_n^{\epsilon_n}$ which are \emph{reduced}, in the sense that they satisfy the following property: for every index $i$ with $1\leq i<n$, we have that $a_i=a_{i+1}$ implies $\epsilon_i=\epsilon_{i+1}$. This is justified by the following result.

\begin{lemma}
  \label{zz-reduce}
  The equivalence class under $\sim$ of a zig-zag contains a unique reduced
  zig-zag.
\end{lemma}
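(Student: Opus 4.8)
The plan is to split the statement into \emph{existence} and \emph{uniqueness} of a reduced representative in each $\sim$-class. For existence I would argue by termination: say a zig-zag $a_1^{\epsilon_1}\cdots a_n^{\epsilon_n}$ is \emph{reducible} if it contains a factor $a_i^{\epsilon_i}a_{i+1}^{\epsilon_{i+1}}$ with $a_i=a_{i+1}$ and $\epsilon_i\neq\epsilon_{i+1}$, i.e.\ a factor of the form $a^+a^-$ or $a^-a^+$. By \eqref{zig-zag-congruence} such a factor is congruent to an identity, so deleting it produces a congruent zig-zag of strictly smaller length. Iterating this deletion must terminate since the length is a natural number, and a zig-zag on which it cannot be performed is exactly a reduced one; hence every $\sim$-class contains a reduced zig-zag.

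For uniqueness I would invoke confluence. Consider the rewriting relation $\Rightarrow$ on zig-zags whose single steps are precisely these deletions of a cancelling factor: it is terminating (length strictly decreases), and its normal forms are exactly the reduced zig-zags. The first point to check is that the equivalence relation generated by $\Rightarrow$ coincides with $\sim$: it is contained in $\sim$ because each step replaces a factor by a $\sim$-equal one, and it contains $\sim$ because it is a congruence (the deletable factor may be surrounded by arbitrary prefix and suffix) which contains the generating relations \eqref{zig-zag-congruence}. By Newman's lemma~\cite{newman42} it then suffices to show that $\Rightarrow$ is locally confluent, since two reduced zig-zags in the same class are then forced to be equal.

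The local confluence check is the technical core. Suppose $w\Rightarrow w_1$ by deleting the factor at positions $(i,i+1)$ and $w\Rightarrow w_2$ by deleting the factor at positions $(j,j+1)$, with $i\leq j$. If $i=j$ the two steps coincide. If $j\geq i+2$ the two factors are disjoint, both deletions can be performed, and in either order they lead to the same zig-zag obtained by deleting both factors. The interesting case is $j=i+1$: positions $i,i{+}1,i{+}2$ then carry letters $b^\delta,b^{-\delta}$ (the first cancelling factor) and $c^\eta,c^{-\eta}$ (the second), and since position $i+1$ is shared this forces $b=c$ and $\eta=-\delta$, so the three letters are $b^\delta,b^{-\delta},b^\delta$; deleting either the first two or the last two leaves the same single $b^\delta$ in that spot, so $w_1=w_2$. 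In every case $w_1$ and $w_2$ are joinable, which gives local confluence and finishes the proof.

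The main obstacle I expect is precisely this overlap case of the local-confluence analysis, namely noticing that an overlap of two cancelling factors forces the underlying rule to agree and makes the two reducts literally equal — everything else is bookkeeping. An alternative, avoiding Newman's lemma, would be a van der Waerden–style argument: define a normal-form function on zig-zags directly, using for each letter $a^\epsilon$ the operation on reduced zig-zags that cancels the last letter when it is $a^{-\epsilon}$ and appends $a^\epsilon$ otherwise, and check that the resulting map is constant on $\sim$-classes; but the confluence argument is closer to the rewriting-theoretic spirit of the paper.
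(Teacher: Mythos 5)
Your proposal is correct and follows essentially the same route as the paper: the paper also treats the cancellations as a length-reducing (hence terminating) string rewriting system, checks the two critical pairs (with sources $a^+a^-a^+$ and $a^-a^+a^-$, which are exactly your overlap case $j=i+1$), and concludes uniqueness of normal forms by confluence. Your additional check that the rewriting-generated equivalence coincides with $\sim$ is a point the paper leaves implicit.
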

\begin{proof}
  Consider the string rewriting system on words over $\P^\pm_1$ with rules
  $a^+a^-\To\id$ and $a^-a^+\To\id$ for $a\in\P_1$, corresponding to
  \cref{zig-zag-congruence}. It is length-reducing and thus terminating. Its
  critical pairs (whose sources are $a^-a^+a^-$ and $a^+a^-a^+$) are
  confluent, it is thus confluent. We deduce that any equivalence class contains
  a unique normal form, and those are precisely reduced zig-zags.
\end{proof}

\noindent
Given a path $p:x\pathto y$, we write $p^+:x\zzto y$ (\resp $p^-:y\zzto x$) for the zig-zag obtained by adding a ``$+$'' (\resp ``$-$'') exponent to every step of the rewriting path. In particular, the first operation induces a canonical inclusion $\freecat i_1:\freecat\P_1\to\freegpd\P_1$, defined by $\freecat i_1(p)=p^+$, witnessing for the fact that rewriting paths are particular zig-zags. We will sometimes leave its use implicit in the following. Note that \cref{zz-reduce} implies that $\freecat i_1$ is injective.

We think here of \ARS{} as abstractly describing some algebraic structures. It is thus natural to extend this notion in order to take in account the coherence laws that these structures should possess. This can be done as follows.

\begin{definition}
  An \emph{extended abstract rewriting system}, or \ARS{2}, or
  \emph{$2$-polygraph}, $\P$ consists of an \ARS{} as above, together with a
  set~$\P_2$ and two functions $s_1,t_1:\P_2\to\freegpd{\P_1}$, such that
  \begin{align*}
    \freegpd s_0\circ s_1&=\freegpd s_0\circ t_1
    &
    \freegpd t_0\circ s_1&=\freegpd t_0\circ t_1  
  \end{align*}
  This data can be summarized as a diagram
  \[
    \begin{tikzcd}[column sep=large]
      &\ar[dl,shift right,"s_0"',near start]\ar[dl,shift left,"t_0",near start]\ar[d,"i_1"]\P_1&\ar[dl,shift right,"s_1"']\ar[dl,shift left,"t_1"]\P_2\\
      \P_0&\ar[l,shift right,"\freegpd s_0"']\ar[l,shift left,"\freegpd t_0"]\freegpd\P_1
    \end{tikzcd}
  \]
\end{definition}

\noindent
In a \ARS2, the elements of~$\P_2$ are \emph{coherence relations} and the
functions $s_1$ and $t_1$ respectively describe their source and target, which
are rewriting zig-zags. We sometimes write
\[
  A:p\To q
\]
to indicate that $A\in\P_2$ is a coherence relation which admits $p$ (\resp $q$)
as source (\resp target), which can be thought of as a $2$-cell
\[
\begin{tikzcd}[column sep=large]
  x\ar[r,bend left,"p"]\ar[r,bend right,"q"']\ar[r,phantom,"A\!\Downarrow\!\phantom{A}"]&y
\end{tikzcd}
\]
where $x$ (\resp $y$) is the common source (\resp target) of~$p$ and~$q$.
%

\begin{definition}
  The \emph{groupoid presented} by a \ARS{2}~$\P$, denoted by $\pgpd\P$, is the
  groupoid obtained from the free groupoid generated by the underlying \ARS{} by
  quotienting morphisms under the smallest congruence identifying the source and
  the target of any element of~$\P_2$.
\end{definition}

\noindent
More explicitly, the groupoid $\pgpd\P$ thus has~$\P_0$ as set of objects, the
set~$\freegpd\P_1$ of rewriting zig-zags as morphisms, quotiented by the
smallest equivalence relation~$\cohto$ such that
\[
  p\pcomp q\pcomp r\cohto p\pcomp q'\pcomp r
\]
for every rewriting zig-zags $p$ and $r$ and coherence relation~$A:q\To q'$,
which are suitably composable:
\begin{equation}
  \label{coh-step}
  \begin{tikzcd}[column sep=large]
    x'\ar[r,"p"]&x\ar[r,bend left,"q"{name=q}]\ar[r,bend right,"q'"'{name=q'}]&y\ar[r,"r"]&y'
    \ar[from=q,to=q',Rightarrow,"A"',shorten=4pt]
  \end{tikzcd}
\end{equation}
We write $\Leftrightarrow$ for the smallest symmetric relation identifying path
$p\pcomp q\pcomp r$ and $p\pcomp q'\pcomp r$ when there is a coherence relation
$A:q\To q'$ as pictured above, so that $\cohto$ is the reflexive transitive
closure of $\Leftrightarrow$.
Given a rewriting zig-zag $p\in\freegpd\P_1$, we write $\pgpd p$ for the
corresponding morphism in~$\pgpd\P$, \ie its equivalence class
under~$\cohto$. Given a zig-zag $p:x\to y$ in $\freegpd P$, we write
$\pgpd p:x\to y$ for its equivalence class.

\begin{remark}
  \label{free-2-groupoid}
  A more categorical approach to the equivalences between zig-zags can be
  developed as follows, see \cite{polygraphs} for details. A \emph{$2$-groupoid}
  is a $2$-category whose $1$- and $2$-cells are invertible. A \ARS2 freely
  generates a $2$-groupoid, whose underlying $1$-groupoid is the one freely
  generated by the underlying \ARS1 of~$\P$, and containing the coherence
  relations as $2$-cells. Given zig-zags $p,q:x\to y$, we then have $p\cohto q$
  if and only if there is a $2$-cell $p\To q$ in the free $2$-groupoid: the
  $2$-cells can thus be thought of as witnesses for the equivalences of
  zig-zags. We do not further detail this approach here, since it is not
  required, but it would be for instance needed if we were interested in higher
  coherence laws.
\end{remark}

There are many \ARS2 presenting a given groupoid. In particular, one can always
perform the following transformations on \ARS2, while preserving the presented
groupoid. Those are analogous to the transformations that one can perform on
group presentations (while preserving the presented group) first studied by
Tietze~\cite{tietze1908topologischen,lyndon1977combinatorial}.

\begin{definition}
  \label{ars2-tietze}
  The \emph{Tietze transformations} are the following possible transformations
  on a \ARS2~$\P$:
  \begin{enumerate}[(T2)]
  \item[(T1)] given a zig-zag $p:x\zzto y$, add a new rewriting rule $a:x\to y$
    in $\P_1$ together with a new coherence relation $A:a\To p$ in~$\P_2$,
  \item[(T2)] given zig-zags $p,q:x\zzto y$ such that $p\cohto q$, add a new
    coherence relation $A:p\To q$ in~$\P_2$.
  \end{enumerate}
  The \emph{Tietze equivalence} is the smallest equivalence relation on \ARS2
  identifying $\P$ and $\Q$ whenever $\Q$ can be obtained from~$\P$ by a Tietze
  transformation (T1) or (T2).
\end{definition}

\noindent
It is easy to see that the Tietze transformations are ``correct'', in the sense
that they preserve the presented groupoid. With more
work~\cite[Chapter~5]{polygraphs}, it is even possible to show that those
transformations are ``complete'', in the sense that any two \ARS2 presenting the
same groupoid are Tietze equivalent. We only state the first direction here
since this is the only one we will need:

\begin{proposition}
  \label{tietze-correct}
  Any two Tietze equivalent \ARS2 present isomorphic groupoids.
\end{proposition}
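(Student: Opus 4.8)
The plan is to reduce to the case of a single Tietze transformation, and then handle each of the two types (T1) and (T2) separately, exhibiting an explicit isomorphism of presented groupoids in each case. First I would observe that Tietze equivalence is the reflexive--symmetric--transitive closure of the relation ``$\Q$ is obtained from $\P$ by one transformation'', and that ``presents isomorphic groupoids'' is itself an equivalence relation on \ARS2; so it suffices to show that whenever $\Q$ is obtained from $\P$ by a single (T1) or (T2) step, we have $\pgpd\P\cong\pgpd\Q$. In both cases the underlying sets of objects are unchanged, $\P_0=\Q_0$, so the isomorphism will be identity on objects, and the work is to check that the two quotients of zig-zags agree.

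For (T2): here $\P_1=\Q_1$, so $\freegpd\P_1=\freegpd\Q_1$ as sets of zig-zags, and $\Q_2=\P_2\cup\set{A}$ with $A:p\To q$ and $p\cohto_\P q$ already. The congruence $\cohto_\Q$ is generated by the same relations as $\cohto_\P$ together with the extra pair $(p,q)$; but since $p\cohto_\P q$ already holds, adding this pair does not enlarge the congruence, so $\cohto_\P$ and $\cohto_\Q$ coincide and the identity-on-everything map is the desired isomorphism. (One should spell out that a congruence containing $\cohto_\P$ and all pairs from $\Q_2$ is the same as one containing $\cohto_\P$, since the pair $(p,q)$ is absorbed.)

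For (T1): here $\Q_1=\P_1\sqcup\set{a}$ with $a:x\to y$ and $\Q_2=\P_2\sqcup\set{A}$ with $A:a\To p$ for a fixed zig-zag $p:x\zzto y$ over $\P$. The natural inclusion $\P\hookrightarrow\Q$ of \ARS2 induces a functor $\Phi:\pgpd\P\to\pgpd\Q$ by the universal property; I would construct an inverse $\Psi:\pgpd\Q\to\pgpd\P$ by sending the generator $a$ (and $a^\pm$) to (the class of) $p^{\pm}$ and every old generator to itself, checking that this respects all the relations of $\Q_2$ — the new relation $A:a\To p$ is sent to $p\cohto_\P p$, which holds trivially, and the old ones are preserved since $\Psi$ is the identity on them. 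Then $\Psi\circ\Phi=\id_{\pgpd\P}$ is immediate (identity on generators), and $\Phi\circ\Psi=\id_{\pgpd\Q}$ requires checking that $a\cohto_\Q p$ in $\pgpd\Q$, which is exactly what the relation $A$ gives; since these are identity-on-objects functors agreeing on generators up to the congruence, they are mutually inverse isomorphisms.

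The main obstacle, such as it is, is bookkeeping rather than conceptual: one must be careful that $\Psi$ is well-defined on \emph{all} zig-zags (not just positive paths) and descends to the congruence, i.e.\ that replacing $a$ by $p$ everywhere in a zig-zag and then quotienting by $\cohto_\P$ is insensitive to the choice of representative and to the $\cohto_\Q$-relations; this is where one uses that $p$ is a \emph{fixed} zig-zag independent of context, so the substitution is functorial. Once that is set up, verifying that $\Phi$ and $\Psi$ are mutually inverse is routine. Alternatively, and perhaps more cleanly, one can phrase the whole argument via the universal properties: a $\Q$-algebra (morphism of \ARS2 from $\Q$ into a groupoid, respecting $\cohto$) is the same data as a $\P$-algebra, because the value on $a$ is forced to be the value on $p$ by the relation $A$, and conversely any $\P$-algebra extends uniquely; hence $\pgpd\P$ and $\pgpd\Q$ corepresent the same functor and are isomorphic.
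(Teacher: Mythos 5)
Your proposal is correct and is exactly the argument the paper has in mind: the paper states this proposition without proof, merely remarking just before it that ``it is easy to see that the Tietze transformations are correct,'' and your case analysis (a single (T2) step does not enlarge the congruence; a single (T1) step admits the mutually inverse identity-on-objects functors given by inclusion and by substituting $p$ for $a$) is the standard way to fill in that remark. The points you flag as needing care --- well-definedness of $\Psi$ on zig-zags containing $a^{\pm}$ and its descent to the congruence --- are indeed the only nontrivial checks, and your treatment of them is sound.
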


\noindent
From the transformations of \cref{ars2-tietze}, it is possible to derive other transformations, which also preserve the presented groupoid.

\begin{lemma}
  \label{tietze-remove-rule}
  Suppose that~$\P$ is a \ARS2 containing a rewriting rule $a:x\to y$ and a relation $A:p\To q$ such that $a$ occurs exactly once in the source~$p$, \ie $p=p_1\pcomp a\pcomp p_2$, and does not occur in the target:
  \[
    \begin{tikzcd}
      &x'\ar[r,"a",""{name=a,below}]&y'\ar[dr,bend left,"p_2"]\\
      x\ar[ur,bend left,"p_1"]\ar[rrr,"q"',""{name=q,above}]&&&y\\
      \ar[from=a,to=q,Rightarrow,"A"',shorten=5pt]
    \end{tikzcd}
  \]
  Then~$\P$ is Tietze equivalent to the \ARS2 where
  \begin{itemize}
  \item we have removed the rewriting rule~$a$,
  \item we have removed the coherence relation~$A$,
  \item we have replaced every occurrence of~$a$ in the source or target of a coherence relation by~$p_1^-\pcomp q\pcomp p_2^-$.
  \end{itemize}
\end{lemma}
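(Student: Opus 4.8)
The plan is to exhibit the transformation as a finite composite of Tietze transformations (T1), (T2) and their inverses; since Tietze equivalence (\cref{ars2-tietze}) is an equivalence relation generated by (T1) and (T2), producing such a composite proves the lemma, and \cref{tietze-correct} then additionally gives that the two \ARS2 present isomorphic groupoids. Write $w = p_1^-\pcomp q\pcomp p_2^-\colon x'\zzto y'$ for the zig-zag meant to replace~$a$ (it has the same source and target as~$a$). The computation rests on one preliminary observation: $a\cohto w$ in~$\pgpd\P$. Indeed, using the zig-zag congruence $p_1^-\pcomp p_1\sim\id$ and $p_2\pcomp p_2^-\sim\id$ one gets $a\sim p_1^-\pcomp p\pcomp p_2^-$, and then $A\colon p\To q$ gives $p_1^-\pcomp p\pcomp p_2^-\Leftrightarrow w$. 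Note also that substituting~$w$ for each forward occurrence of~$a$ and~$w^-$ for each backward occurrence is well defined on zig-zags modulo~$\sim$ (it respects $a^+a^-\sim\id$ and $a^-a^+\sim\id$); given a relation $B\colon r\To s$ of~$\P_2$, write $\hat B\colon\hat r\To\hat s$ for the relation obtained by this substitution in~$r$ and~$s$, so that $\hat B = B$ whenever~$a$ does not occur in~$B$.

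Starting from~$\P$, I would then carry out the following moves. First, by (T2), add the relation $A'\colon a\To w$; this is legitimate since $a\cohto w$. Next, for every relation $B\neq A$ of~$\P_2$ in which~$a$ occurs, add by (T2) the relation~$\hat B$; this is legitimate because $\hat r\cohto r$ and $s\cohto\hat s$ follow from~$A'$ alone (one application per occurrence of~$a$), while $r\cohto s$ is~$B$. Then, for each such~$B$, remove~$B$ by the inverse of (T2): after the previous step the congruence $r\cohto s$ is witnessed by~$A'$ and~$\hat B$, hence does not use~$B$. Then remove~$A$ by the inverse of (T2): here $p=p_1\pcomp a\pcomp p_2\cohto p_1\pcomp w\pcomp p_2$ by~$A'$, and $p_1\pcomp w\pcomp p_2\cohto q$ by the zig-zag congruence (the $p_i$'s cancel the $p_i^-$'s occurring in~$w$), so $p\cohto q$ holds without~$A$. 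Finally, observe that~$w$ does not contain~$a$: the hypothesis that~$a$ occurs exactly once in~$p$ forces $p_1$ and $p_2$ to avoid~$a$, and~$q$ avoids~$a$ by hypothesis; hence at this stage~$a$ occurs in the presentation only in the relation $A'\colon a\To w$, as its source and exactly once there, so we may delete both~$a$ and~$A'$ by the inverse of (T1). The resulting \ARS2 is precisely the one in the statement ($a$ and~$A$ deleted, every other relation~$B$ turned into~$\hat B$), which gives the Tietze equivalence.

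The substance of the proof is the bookkeeping, and the point that needs genuine care is the ordering of the moves: each use of the inverse of (T2) must be justified by a congruence that does not secretly invoke the relation being deleted, which is why all the~$\hat B$'s are added before any~$B$ is removed and why~$A'$ is kept available until the last step. A minor wrinkle worth flagging is that applying the substitution to~$A$ itself would yield the trivial relation $q\To q$ (the $p_i$'s cancel), which is why it is cleaner to delete~$A$ outright rather than replace it — consistent with the statement, which removes~$A$.
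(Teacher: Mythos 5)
Your proof is correct. The paper states this lemma without proof, so there is nothing to compare against; your derivation — add $A'\colon a\To w$ by (T2), add all the $\hat B$ by (T2), then remove the $B$'s and $A$ by inverse (T2), and finally remove $a$ together with $A'$ by inverse (T1) — is the standard argument and the bookkeeping is done in the right order, with the two delicate points (each inverse (T2) justified without the relation being deleted, and $a$ occurring nowhere but in $A'$ at the final step) explicitly checked. The only remark worth adding is that your reading of the statement, replacing a backward occurrence $a^-$ by $w^-=p_2\pcomp q^-\pcomp p_1$ rather than by $w$ itself, is indeed the intended one, since otherwise the substitution would not descend to zig-zags modulo $\sim$.
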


\subsection{Rewriting properties}
\label{ars-rewriting}
Let~$\P$ be a \ARS{2}. For simplicity, we suppose that for every coherence relation $A:p\To q$ in~$\P_2$, we have that $p$ and $q$ are rewriting paths (as opposed to zig-zags).
We also suppose fixed a set $W\subseteq\P_1$.  We can think of $W$ as inducing a rewriting subsystem~$\sfW$ of~$\P$, with $\P_0$ as objects, $W$ as rewriting steps and
\[
  \sfW_2=\setof{A\in\P_2}{s_1(A)\in\freecat W\text{ and }t_1(A)\in\freecat W}
\]
as coherence relations, and formulate the various traditional rewriting concepts
with respect to it.
In such a situation, consider the presented groupoid $\C=\pgpd\P$. The set~$W$
of rewriting rules, induces a set of morphisms of $\C$, namely
$\setof{\pgpd w}{w\in\ W}$ that we still write $W$, which generates a
subgroupoid $\W$ of~$\C$. Our aim here is to provide rewriting tools in order to
show that $\W$ is rigid, so that $\C$ is equivalent to the quotient $\C/\W$ by
\cref{rigid-equivalence}, and moreover provide a concrete description of the
quotient category.
 
\begin{definition}
  The \ARS{2} $\P$ is \emph{$W$-terminating} if there is no infinite sequence $a_1,a_2,\ldots$ of elements of~$W$ such that every finite prefix is a rewriting path in~$\freecat{W}$.
\end{definition}

\begin{definition}
  An element $x\in\P_0$ is a \emph{$W$-normal form} when there is no rewriting
  step in~$W$ with $x$ as source.
  We say that~$\P$ is \emph{weakly $W$-normalizing} when for every $x\in\P_0$
  there exists a normal form $\nf x$ and a rewriting path $x\pathto\nf x$. In
  this case, we write $n_x:x\pathto\nf x$ for an arbitrary choice of such a
  path, which is however supposed to be the identity when $x$ is a normal form.
\end{definition}

\begin{lemma}
  \label{lem:term-wn}
  If $\P$ is $W$-terminating then it is weakly $W$-normalizing.
\end{lemma}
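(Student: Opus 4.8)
The statement is the classical fact that a terminating rewriting system is weakly normalizing, and the plan is to establish it by the greedy reduction strategy. Fix an object $x\in\P_0$; the goal is to produce a $W$-normal form $\nf x$ together with a rewriting path $x\pathto\nf x$ lying in $\freecat W$, with the convention that this path is the identity when $x$ is already a normal form.

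I would build, by repeated choice, objects $x_0,x_1,\ldots$ and $W$-rewriting steps $a_1,a_2,\ldots$ as follows: put $x_0=x$, and given $x_i$, stop if $x_i$ is a $W$-normal form, and otherwise (by definition of normal form there is some step in $W$ out of $x_i$) pick a step $a_{i+1}\colon x_i\to x_{i+1}$ and continue. If this process halts at some index $n$, then $x_n$ is a $W$-normal form and $a_1\pcomp\cdots\pcomp a_n\colon x\pathto x_n$ is a path in $\freecat W$, so $\nf x:=x_n$ works (with $n=0$ when $x$ is itself normal). If the process never halts, it yields an infinite sequence $a_1,a_2,\ldots$ of elements of $W$ every finite prefix of which is a path in $\freecat W$, contradicting $W$-termination. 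Hence the process always halts, and $\P$ is weakly $W$-normalizing.

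The only subtle point — and the place where care is needed — is that this construction selects infinitely many steps $a_{i+1}$ and so relies on dependent choice; equivalently, and more structurally, one can run the argument as a well-founded induction on $x$ with respect to the one-step $W$-reduction relation, which $W$-termination makes well-founded: if $x$ is normal take the identity path, otherwise choose a single step $a\colon x\to x'$ and prepend it to the path $x'\pathto\nf{x'}$ supplied by the induction hypothesis. Either presentation is routine; there is no real obstacle beyond this appeal to choice/well-foundedness.
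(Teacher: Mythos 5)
Your proof is correct and follows essentially the same route as the paper, which simply takes a maximal rewriting path in $\freecat W$ from~$x$, observes it is finite by $W$-termination, and notes its target is a $W$-normal form by maximality. Your greedy construction is exactly this maximal path, spelled out with the (harmless) extra remark about dependent choice versus well-founded induction.
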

\begin{proof}
  Consider a maximal rewriting path $a_1,a_2,\ldots$ in $\freecat W$ starting
  from~$x$. Because $\P$ is $W$-terminating, this path is necessarily finite,
  and its target is a normal form by maximality.
\end{proof}

\begin{definition}
  A \emph{$W$-branching} is a pair of rewriting paths
  \begin{align*}
    p_1:x&\pathto y_1
    &
    q_2:x&\pathto y_2
  \end{align*}
  in~$\freecat W$ which are coinitial, \ie have the same source~$x$, which is called the \emph{source} of the branching.
  A $W$-branching is \emph{local} when both $p_1$ and $p_2$ are rewriting steps.
  A $W$-branching as above is \emph{confluent} when there is a pair of cofinal
  (with the same target) rewriting paths $q_1:y_1\pathto z$ and
  $q_2:y_2\pathto z$ in~$\freecat W$ such that
  $p_1\pcomp q_1\cohto p_2\pcomp q_2$:
  \[
    \begin{tikzcd}[sep=small]
      &\ar[dl,"p_1"']x\ar[dr,"p_2"]&\\
      y_1\ar[dr,dotted,"q_1"']\ar[rr,Leftrightarrow,"*",shorten=12pt]&&\ar[dl,dotted,"q_2"]y_2\\
      &z&
    \end{tikzcd}
  \]
\end{definition}

\noindent
Note that, in the above definitions, not only we require that we can close a
span of rewriting steps by a cospan of rewriting paths (as in the traditional
definition of confluence), but also that the confluence square can be filled
coherence relations.



\begin{definition}
  The \ARS{} $\P$ is \emph{locally $W$-confluent} when $W$-branching is
  confluent.
  It is \emph{$W$-confluent} when for every $p_1:x\pathto y_1$ and
  $p_2:x\pathto y_2$ in $\freecat W$, there exist $q_1:y_1\pathto z$ and
  $q_2:y_2\pathto z$ in $\freecat W$ such that
  $p_1\pcomp q_1\cohto p_2\pcomp q_2$. We say that~$\P$ is \emph{$W$-convergent}
  when it is both $W$-terminating and $W$-confluent.
\end{definition}

The celebrated Newman's lemma~\cite{newman42} (also sometimes called the diamond lemma) along with its traditional proof~\cite[Theorem 1.2.1 (ii)]{bezem2003term} easily generalizes to our setting:

\begin{proposition}
  \label{prop:lc+n-c}
  \label{newman}
  If $\P$ is $W$-terminating and locally $W$-confluent then it is $W$-confluent.
\end{proposition}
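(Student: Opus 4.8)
The plan is to imitate the classical proof of Newman's lemma by Noetherian induction, carrying the coherence witnesses along so that the tiles we assemble glue up to $\cohto$. First I would note that $W$-termination says exactly that the relation on $\P_0$ defined by ``$y$ is the target of some nonempty rewriting path $x\pathto y$ in $\freecat W$'' is well-founded (read from $y$ back towards $x$). We prove by induction on $x$ along this relation the statement: every pair of coinitial rewriting paths $p_1:x\pathto y_1$ and $p_2:x\pathto y_2$ in $\freecat W$ is confluent, i.e.\ there are $q_1:y_1\pathto z$ and $q_2:y_2\pathto z$ in $\freecat W$ with $p_1\pcomp q_1\cohto p_2\pcomp q_2$.

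For the induction step, fix $x$ and assume the property holds for every object reachable from $x$ by a nonempty rewriting path in $\freecat W$. If $p_1$ (or, symmetrically, $p_2$) is the identity path, confluence is immediate: take $q_1=p_2$, $q_2=\id_{y_2}$, for which $p_1\pcomp q_1$ and $p_2\pcomp q_2$ are even literally equal. Otherwise write $p_1=a_1\pcomp p_1'$ and $p_2=a_2\pcomp p_2'$ with $a_1:x\to x_1$ and $a_2:x\to x_2$ rewriting steps. Local $W$-confluence applied to the local branching $(a_1,a_2)$ gives $r_1:x_1\pathto z_0$ and $r_2:x_2\pathto z_0$ with $a_1\pcomp r_1\cohto a_2\pcomp r_2$. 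Since $x_1$ is reachable from $x$ by the nonempty path $a_1$, the induction hypothesis applies to the coinitial pair $(p_1',r_1)$ at $x_1$, yielding $s_1:y_1\pathto z_1$ and $t_1:z_0\pathto z_1$ with $p_1'\pcomp s_1\cohto r_1\pcomp t_1$. Likewise, since $x_2$ is reachable from $x$ by the nonempty path $a_2$, the induction hypothesis applies to the coinitial pair $(p_2',\,r_2\pcomp t_1)$ at $x_2$, yielding $s_2:y_2\pathto z$ and $t_2:z_1\pathto z$ with $p_2'\pcomp s_2\cohto r_2\pcomp t_1\pcomp t_2$. Then $q_1:=s_1\pcomp t_2:y_1\pathto z$ and $q_2:=s_2:y_2\pathto z$ work: using that $\cohto$ is a congruence with respect to composition (immediate from its definition via \cref{coh-step}), the three tiles paste together to give $p_1\pcomp q_1=a_1\pcomp p_1'\pcomp s_1\pcomp t_2\cohto a_1\pcomp r_1\pcomp t_1\pcomp t_2\cohto a_2\pcomp r_2\pcomp t_1\pcomp t_2\cohto a_2\pcomp p_2'\pcomp s_2=p_2\pcomp q_2$.

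I do not anticipate a genuine obstacle: the only thing that goes beyond the textbook argument is the bookkeeping of coherence witnesses, and the single point to check carefully is that $\cohto$ is closed under pre- and post-composition, so that the local-confluence tile and the two tiles produced by the induction hypothesis can be assembled into one confluence diagram for $(p_1,p_2)$. I would state this congruence property explicitly before running the pasting computation.
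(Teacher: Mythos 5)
Your proof is correct and follows essentially the same route as the paper's: well-founded induction on the source object using $W$-termination, closing the two leading steps with local confluence and the two tails with the induction hypothesis, then pasting the three tiles. The paper presents the pasting purely diagrammatically, whereas you make explicit the point that $\cohto$ is a congruence for concatenation (which does hold, since each elementary step $p\pcomp q\pcomp r\Leftrightarrow p\pcomp q'\pcomp r$ remains elementary after pre- or post-composition); that is a worthwhile clarification but not a divergence in method.
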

\begin{proof}
  The relation on objects defined by $x\geq y$ whenever there exists a rewriting
  path $p:x\pathto y$ in $\freecat W$ is a well-founded partial order
  because~$\P$ is $W$-terminating.
  We say that~$\P$ is \emph{$W$-confluent at~$x$} when every $W$-branching with
  source~$x$ confluent. We are going to show that $\P$ is locally $W$-confluent
  at $x$ for every object $x$, by well-founded induction on~$x$. In the base
  case, $x$ is a $W$-normal form and the result is immediate. Otherwise,
  consider a $W$-branching consisting of paths $a_1\pcomp p_1$ and $a_2\pcomp p_2$
  for some rewriting steps $a_1$ and $a_2$ and rewriting paths $p_1$ and $p_2$
  (we suppose that the paths are non-empty, otherwise the result is
  immediate). The following diagram shows the $W$-confluence at~$x$:
  \[
    \begin{tikzcd}[sep=small]
      &&\ar[dl,"a_1"']x\ar[dr,"a_2"]\\
      &y_1\ar[dl,"p_1"']\ar[dr,dotted]\ar[rr,phantom,"\textsc{lc}"]&&\ar[dl,dotted]y_2\ar[dr,"p_2"]\\
      y\ar[dr,dotted,"*"']\ar[rr,phantom,"\textsc{ih}"]&&\ar[dl,dotted]y''\ar[rr,phantom,"\textsc{ih}"]&&\ar[ddll,dotted,"*"]y'\\
      &z\ar[dr,dotted,"*"']\\
      &&z'
    \end{tikzcd}
  \]
  Above, the diagram \textsc{lc} is $W$-confluent by local confluence, and the
  two diagrams \textsc{ih} are by induction hypothesis.
\end{proof}

\begin{definition}
  The \ARS2 $\P$ is \emph{$W$-coherent} if for any parallel zig-zags
  $p,q:x\zzto y$ in $\freegpd W$, we have~$p\cohto q$.
\end{definition}

\noindent
The following is immediate:

\begin{lemma}
  \label{coherent-rigid}
  A \ARS2 $\P$ is $W$-coherent precisely when $\pgpd\sfW$ is a rigid subgroupoid of $\pgpd\P$.
\end{lemma}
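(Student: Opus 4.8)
The plan is to prove the equivalence by unfolding both of its sides in terms of rewriting zig-zags and checking that they say literally the same thing, so that the lemma amounts to a reformulation. The preliminary observation I would record is an explicit description of the subgroupoid $\pgpd\sfW$ of $\pgpd\P$, namely the subgroupoid generated by $W$: since composition in $\pgpd\P$ is concatenation of zig-zags, the inverse of $\pgpd{a^+}$ is $\pgpd{a^-}$, and identities are classes of empty paths, the morphisms of $\pgpd\sfW$ are exactly the classes $\pgpd p$, for $p$ a zig-zag in $\freegpd W$. Moreover, since $\pgpd\P$ is obtained by quotienting zig-zags under $\cohto$, two such morphisms $\pgpd p$ and $\pgpd q$ sharing a source and a target are equal in $\pgpd\P$ precisely when $p\cohto q$.

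Granting this, the two implications are immediate. If $\pgpd\sfW$ is rigid, then for parallel zig-zags $p,q\colon x\zzto y$ in $\freegpd W$ the morphisms $\pgpd p,\pgpd q\colon x\to y$ are parallel in $\pgpd\sfW$, hence equal, which is exactly $p\cohto q$; so $\P$ is $W$-coherent. Conversely, if $\P$ is $W$-coherent, then any two parallel morphisms of $\pgpd\sfW$ are of the form $\pgpd p,\pgpd q$ for parallel zig-zags $p,q$ over $W$, and $W$-coherence yields $p\cohto q$, \ie $\pgpd p=\pgpd q$; so $\pgpd\sfW$ is rigid.

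The argument is a pure unfolding of definitions and I do not expect any genuine obstacle; the one point deserving a line of care is the bookkeeping about which parallel pairs one quantifies over on each side — parallel morphisms of $\pgpd\sfW$ on the rigidity side, and all parallel zig-zags of $\freegpd W$ on the coherence side. These match by the description above, and in the degenerate case where one of the two zig-zags is empty, its source and target are still objects of $\pgpd\sfW$, so that rigidity still applies there. With this checked, the lemma follows at once, as announced.
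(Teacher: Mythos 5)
Your proof is correct and coincides with what the paper intends: the lemma is stated there as immediate, and your unfolding — morphisms of the subgroupoid generated by $W$ are exactly the classes $\pgpd p$ of zig-zags $p$ in $\freegpd W$, with $\pgpd p=\pgpd q$ iff $p\cohto q$ — is precisely the definitional matching that makes it so. The care you take with identities and with which parallel pairs are quantified over is the only content the argument requires.
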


\noindent
The traditional Church-Rosser property~\cite[Theorem~1.2.2]{bezem2003term}
generalizes as follows in our setting:

\begin{proposition}
  \label{prop:ars-zz-confl}
  If $\P$ is weakly $W$-normalizing and $W$-confluent then for any zig-zag
  $p:x\zzto y$ in $\freegpd W$, we have $\hat x=\hat y$ and
  $p\pcomp n_y\cohto n_x$, \ie the diagram
  \[
    \begin{tikzcd}
      x\ar[d,"n_x"']\ar[r,"p"]&y\ar[d,"n_y"]\\
      \hat{x}\ar[r,equals]&\hat{y}
    \end{tikzcd}
  \]
  commutes in $\pgpd\P$.
\end{proposition}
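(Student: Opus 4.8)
The plan is to mimic the classical derivation of the Church--Rosser property from confluence, arguing by induction on the length $n$ of the zig-zag $p:x\zzto y$ in $\freegpd W$. For $n=0$ we have $x=y$, the zig-zag $p$ is the identity, and both assertions are immediate: $\hat x=\hat y$ and $p\pcomp n_y=n_x\cohto n_x$. For the inductive step, I would decompose $p=p'\pcomp r$, where $r$ is the last elementary step and $p':x\zzto y'$ has length $n-1$, and apply the induction hypothesis to $p'$ to get $\hat x=\hat{y'}$ and $p'\pcomp n_{y'}\cohto n_x$. It then remains to ``absorb'' the last step $r$.

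Two cases arise depending on the orientation of $r$. If $r=a^+$ with $a:y'\to y$ a rule of~$W$, then $n_{y'}:y'\pathto\hat{y'}$ and $a\pcomp n_y:y'\pathto\hat y$ form a $W$-branching, so $W$-confluence yields $q_1:\hat{y'}\pathto z$ and $q_2:\hat y\pathto z$ in $\freecat W$ with $n_{y'}\pcomp q_1\cohto a\pcomp n_y\pcomp q_2$; since $\hat{y'}$ and $\hat y$ are $W$-normal forms, $q_1$ and $q_2$ must be empty, so $\hat{y'}=\hat y$ and $n_{y'}\cohto a\pcomp n_y$. Using that $\cohto$ is a congruence, this gives $\hat x=\hat y$ and $p\pcomp n_y=p'\pcomp a\pcomp n_y\cohto p'\pcomp n_{y'}\cohto n_x$. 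If instead $r=a^-$ with $a:y\to y'$, the same argument applied to the $W$-branching $n_y:y\pathto\hat y$ and $a\pcomp n_{y'}:y\pathto\hat{y'}$ gives $\hat y=\hat{y'}$ and $n_y\cohto a\pcomp n_{y'}$; inserting the congruence relation $a^-\pcomp a^+\sim\id$ then yields $p\pcomp n_y=p'\pcomp a^-\pcomp n_y\cohto p'\pcomp a^-\pcomp a^+\pcomp n_{y'}\cohto p'\pcomp n_{y'}\cohto n_x$, and again $\hat x=\hat y$.

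I do not anticipate a genuine difficulty, the proof being essentially bookkeeping; the two points deserving care are that every path fed to $W$-confluence indeed lies in $\freecat W$ (which holds because $p\in\freegpd W$ forces each $a$ above to be a rule of~$W$, and the normalization paths $n_x$ are chosen in $\freecat W$), and that $\cohto$, read as a relation on zig-zags already quotiented by the congruence $a^+a^-\sim\id$, $a^-a^+\sim\id$, is compatible with concatenation on both sides, which is immediate from its definition as the smallest congruence generated by $\Leftrightarrow$. Note that only the \emph{existence} of the normal forms and of the paths $n_x$ is used, so weak $W$-normalization suffices and no further well-founded induction on objects is required, termination having already been consumed by Newman's lemma to produce $W$-confluence.
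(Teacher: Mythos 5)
Your proof is correct and follows essentially the same route as the paper's: both reduce to the directed case via $W$-confluence together with the observation that a path in $\freecat W$ out of a $W$-normal form must be empty, and then paste the resulting commuting squares along the zig-zag. The only difference is bookkeeping — the paper decomposes the zig-zag into maximal forward/backward runs and pastes one large diagram, whereas you peel off a single elementary step at a time by induction on the length; both are fine, and your remark that weak normalization (rather than termination) suffices matches the paper's hypotheses.
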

\begin{proof}
  By confluence, given a rewriting path $p:x\pathto y$ in $\freecat W$, we have
  $\nf x=\nf y$ and $p\pcomp n_y\cohto n_x$, and thus $p^+\pcomp n_y\cohto n_x$
  and $n_x\pcomp p^-\cohto n_y$, \ie the following diagrams commute in
  $\pgpd\P$:
  \[
    \begin{tikzcd}[sep=small]
      x\ar[d,"n_x"']\ar[r,"p^+"]&y\ar[d,"n_y"]\\
      \nf{x}\ar[r,equals]&\nf{y}
    \end{tikzcd}
    \qquad\qquad\qquad\qquad
    \begin{tikzcd}[sep=small]
      x\ar[d,"n_x"']&\ar[l,"p^-"']y\ar[d,"n_y"]\\
      \nf{x}\ar[r,equals]&\nf{y}
    \end{tikzcd}
  \]
  Any zig-zag $p:x\zzto y$ in $\freegpd W$ decomposes as
  $p=p_1^-q_1^+p_2^-p_2^+\ldots p_n^-p_n^+$ for some $n\in\N$ and paths $p_i$
  and $q_i$ in $\freecat W$. We thus have $p\pcomp n_y\cohto n_x$, since all the
  squares of the following diagram commute in~$\pgpd W$ by the preceding remark:
  \[
    \begin{tikzcd}
      x\ar[d,"n_x"']\ar[r,"p_1^-"]&y_1\ar[d,"n_{y_1}"description]\ar[r,"q_1^+"]&x_2\ar[d,"n_{x_2}"description]\ar[r]&\cdots\ar[r]&x_n\ar[d,"n_{x_n}"description]\ar[r,"p_n^-"]&y_n\ar[d,"n_{y_n}"description]\ar[r,"q_n^-"]&y\ar[d,"n_{y}"description]\\
      \nf x\ar[r,equals]&\nf x\ar[r,equals]&\nf x\ar[r,equals]&\cdots\ar[r,equals]&\nf x\ar[r,equals]&\nf x\ar[r,equals]&\nf x
    \end{tikzcd}
  \]
  which allows us to conclude.
\end{proof}

\noindent
This implies the following ``abstract'' variant of Squier's homotopical theorem~\cite{guiraud2018polygraphs,lafont1995new,squier1994finiteness}:

\begin{proposition}
  \label{prop:ars-cr}
  \label{prop:ars-coh}
  If $\P$ is weakly $W$-normalizing and $W$-confluent then it is $W$-coherent.
\end{proposition}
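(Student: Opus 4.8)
The plan is to deduce this directly from the Church--Rosser-type statement \cref{prop:ars-zz-confl}, which already does the real work. Let $p,q:x\zzto y$ be parallel zig-zags in $\freegpd W$; we must show $p\cohto q$.

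First I would apply \cref{prop:ars-zz-confl} separately to $p$ and to $q$: since $\P$ is weakly $W$-normalizing and $W$-confluent, this yields $\nf x=\nf y$ together with the congruences $p\pcomp n_y\cohto n_x$ and $q\pcomp n_y\cohto n_x$, where $n_x:x\pathto\nf x$ and $n_y:y\pathto\nf y$ are the chosen normalizing paths. These are rewriting paths in $\freecat W$, hence in particular zig-zags in $\freegpd W$, so every zig-zag occurring in the argument stays within $\freegpd W$.

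Next I would cancel the common suffix $n_y$. Writing $n_y^-:\nf y\zzto y$ for the reversed zig-zag, post-composition with $n_y^-$ preserves $\cohto$: the relation $\cohto$ is a two-sided congruence for $\pcomp$, being the reflexive--transitive closure of $\Leftrightarrow$, and $\Leftrightarrow$ is itself stable under pre- and post-composition by fixed zig-zags, straight from its definition. Since moreover $n_y\pcomp n_y^-$ already equals $\id_y$ in the free groupoid $\freegpd\P$ (it reduces via the defining congruence on zig-zags), post-composing $p\pcomp n_y\cohto n_x$ with $n_y^-$ gives $p\cohto n_x\pcomp n_y^-$, and likewise $q\cohto n_x\pcomp n_y^-$ --- the composite $n_x\pcomp n_y^-$ being well-formed precisely because $\nf x=\nf y$. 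By symmetry and transitivity of $\cohto$ we conclude $p\cohto q$, which is exactly $W$-coherence; equivalently, by \cref{coherent-rigid}, $\pgpd\sfW$ is a rigid subgroupoid of $\pgpd\P$.

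There is essentially no obstacle at this stage: all the substance --- Newman's lemma and the passage from weak normalization plus confluence to confluence of arbitrary zig-zags --- has been absorbed into \cref{prop:lc+n-c,prop:ars-zz-confl}. The only points to keep honest are the bookkeeping ones: that $\cohto$ is genuinely a two-sided congruence, so that the cancellation of $n_y$ is legitimate, and that none of the zig-zags manipulated ever leaves $\freegpd W$, so that no rewriting step outside $W$ is secretly used.
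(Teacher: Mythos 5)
Your proof is correct and follows essentially the same route as the paper: both arguments apply \cref{prop:ars-zz-confl} to each of the two parallel zig-zags to get $p\pcomp n_y\cohto n_x$ and $q\pcomp n_y\cohto n_x$, and then cancel $n_y$ using its inverse $n_y^-$ in the free groupoid to conclude $p\cohto n_x\pcomp n_y^-\cohto q$. The only difference is presentational (the paper draws this as a commuting diagram in $\pgpd\P$, while you spell out the congruence properties of $\cohto$ explicitly), so there is nothing to add.
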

\begin{proof}
  Given two parallel zig-zags $p,q:x\zzto y$ in $\freegpd W$, we have
  $p\cohto q$, since the following diagram commutes in $\pgpd\P$:
  \[
    \begin{tikzcd}
      &\ar[dl,"p"']x\ar[dd,"n_x"description]\ar[dr,"q"]&\\
      y\ar[ddr,bend right,"\id_y"']\ar[dr,"n_y"']&&\ar[ddl,bend left,"\id_y"]\ar[dl,"n_y"]y\\
      &\nf{y}\ar[d,"n_y^-"',pos=.2]&\\
      &y
    \end{tikzcd}
  \]
  Namely, we have $\nf x=\nf y$ by confluence, the two triangles above commute
  by \cref{prop:ars-zz-confl}, and the two triangles below do because $n_y^-$ is
  an inverse for $n_y$.
\end{proof}

\begin{example}
  As a variant of \cref{ex:rigid-equivalence}, consider the \ARS{2}~$\P$ with
  $\P_0=\set{x,y}$, $\P_1=\set{a,b:x\to y}$ and $\P_2=\emptyset$, \ie $
  \begin{tikzcd}
    x\ar[r,shift left,"a"]\ar[r,shift right,"b"']&y
  \end{tikzcd}
  $. With
  $W=\set{a}$, we have that $\P$ is $W$-terminating and locally $W$-confluent,
  thus $W$-confluent by~\cref{prop:lc+n-c}, and thus $W$-coherent by
  \cref{lem:term-wn,prop:ars-coh}. With $W=\set{a,b}$, we have seen in
  \cref{ex:rigid-equivalence} that the groupoid $\pgpd{W}$ is not rigid and,
  indeed, $\P$ is not $W$-confluent because $\pgpd a\neq\pgpd b$
  (because $\P_2=\emptyset$).
\end{example}

\begin{definition}
  We write $\NF{\pgpd\P}$ for the \emph{category of normal forms} of~$\pgpd\P$, defined as the full subcategory of~$\pgpd\P$ whose objects are those in $W$-normal form.
\end{definition}

\begin{lemma}
  \label{normal-subcat-equiv}
  If~$\P$ is weakly $W$-normalizing, then the inclusion functor $\NF{\pgpd\P}\to\pgpd\P$ is an equivalence of categories.
\end{lemma}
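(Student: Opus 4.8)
The plan is to verify the two standard conditions guaranteeing that a functor is an equivalence of categories, namely full faithfulness and essential surjectivity on objects, and then to invoke the classical fact that these two properties together imply equivalence.

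Full faithfulness is immediate and requires no work: by \cref{normal-subcat-equiv}'s setup, $\NF{\pgpd\P}$ is by definition the \emph{full} subcategory of $\pgpd\P$ spanned by the objects in $W$-normal form, so the inclusion functor acts as the identity on each hom-set $\NF{\pgpd\P}(x,y)=\pgpd\P(x,y)$ when $x$ and $y$ are $W$-normal forms. Hence the only thing to establish is essential surjectivity.

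For essential surjectivity, fix an arbitrary object $x$ of $\pgpd\P$, that is, an element $x\in\P_0$. Since $\P$ is weakly $W$-normalizing, there is a $W$-normal form $\nf x$ together with a chosen rewriting path $n_x:x\pathto\nf x$ in $\freecat W$. Regarding this path as a zig-zag via the canonical inclusion $\freecat i_1:\freecat\P_1\to\freegpd\P_1$, it determines a morphism $\pgpd{n_x}:x\to\nf x$ in $\pgpd\P$. Because $\pgpd\P$ is a groupoid, this morphism is invertible, so $x$ is isomorphic in $\pgpd\P$ to $\nf x$, which is an object of the full subcategory $\NF{\pgpd\P}$. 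Thus every object of $\pgpd\P$ lies in the essential image of the inclusion, and a fully faithful, essentially surjective functor is an equivalence.

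I do not anticipate any genuine obstacle here; the one point worth flagging is that invertibility of $\pgpd{n_x}$ rests on $\pgpd\P$ being a groupoid (the rewriting path $n_x$ is of course not itself reversible as a path), and that only weak $W$-normalization is used — none of the confluence or coherence machinery developed above is needed for this statement.
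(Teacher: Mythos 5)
Your proof is correct and follows essentially the same route as the paper's: full faithfulness is immediate from $\NF{\pgpd\P}$ being a full subcategory, and essential surjectivity comes from the normalization path $n_x:x\pathto\nf x$ provided by weak $W$-normalization, which becomes an isomorphism in the presented groupoid $\pgpd\P$. Your explicit remark that invertibility relies on $\pgpd\P$ being a groupoid, and that no confluence is needed, is a correct and worthwhile clarification of what the paper leaves implicit.
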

\begin{proof}
  An object~$x$ of~$\P$ admits a normal form $\nf{x}$, by
  \cref{lem:term-wn}. Writing $n_x:x\pathto\nf{x}$ for a normalization path, we
  have an isomorphism $\pgpd{n_x}:x\to\hat{x}$ in $\pgpd\P$. The inclusion
  functor is thus full and faithful (by definition), and every object
  of~$\pgpd\P$ is isomorphic to an object in the image, it is thus an
  equivalence of categories.
\end{proof}

\noindent
When $\P$ is $W$-convergent, the equivalence given in the above lemma is precisely the one with the quotient category:

\begin{proposition}
  \label{prop:quot-nf}
  If~$\P$ is $W$-convergent, then the quotient category~$\pgpd\P/W$ is isomorphic to the category of normal forms~$\NF{\pgpd\P}$.
\end{proposition}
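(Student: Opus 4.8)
The plan is to reduce to the explicit description of the quotient by a rigid subgroupoid (\cref{prop:rigid-quotient}) and then compare objects and hom-sets of $\NF{\pgpd\P}$ and $\pgpd\P/W$ directly. Throughout, write $\C=\pgpd\P$ and $\W=\pgpd\sfW$ for the subgroupoid of $\C$ it generates. The first observation is that $W$-convergence makes $\W$ rigid: since $\P$ is $W$-terminating it is weakly $W$-normalizing (\cref{lem:term-wn}), and being moreover $W$-confluent it is $W$-coherent (\cref{prop:ars-coh}), hence $\W$ is a rigid subgroupoid of $\C$ by \cref{coherent-rigid}. Consequently $\C/W\isoto\C/\W\isoto\C/{\sim_\W}$ (by the lemma identifying $\C/W$ and $\C/\W$, together with \cref{prop:rigid-quotient}), so that $\C/W$ admits the concrete description of \cref{rigid-quotient-category} and the quotient functor $[-]\colon\C\to\C/W$ is full (\cref{lem:quotient-surj}).

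I would then consider the composite functor $J\colon\NF\C\hookrightarrow\C\xrightarrow{[-]}\C/W$ of the (fully faithful) inclusion with the quotient functor, and show it is an isomorphism of categories by checking that it is bijective on objects and fully faithful. For bijectivity on objects: every object $x$ of $\C$ has a normal form $\nf x$, and the normalization path $n_x\colon x\pathto\nf x$ lies in $\freecat W$, so its image is a morphism of $\W$ and thus $x\sim_\W\nf x$, giving $J(\nf x)=[x]$; hence $J$ is surjective on objects. If $x,y$ are normal forms with $[x]=[y]$, i.e.\ $x\sim_\W y$, there is a zig-zag $x\zzto y$ in $\freegpd W$, whence $\nf x=\nf y$ by \cref{prop:ars-zz-confl}; but $\nf x=x$ and $\nf y=y$, so $x=y$ and $J$ is injective on objects.

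For full faithfulness it suffices to show that, for normal forms $x,y$, the map $\C(x,y)\to(\C/W)([x],[y])$ induced by $[-]$ is a bijection. Surjectivity: $[-]$ is full, so any morphism $[x]\to[y]$ is $[h]$ for some $h\colon x'\to y'$ in $\C$ with $x'\sim_\W x$ and $y'\sim_\W y$; setting $h'=w_{x,x'}\pcomp h\pcomp w_{y',y}\colon x\to y$, the square exhibiting $h\sim_\W h'$ commutes because $w_{x',x}\pcomp w_{x,x'}=\id_{x'}$ by rigidity, so $[h']=[h]$ with $h'\in\C(x,y)$. Injectivity: if $f,g\colon x\to y$ in $\C$ satisfy $[f]=[g]$, then by \cref{rigid-quotient-category} there are $v\colon x\to x$ and $w\colon y\to y$ in $\W$ with $w\circ f=g\circ v$; since $\W$ is rigid it has only identities as automorphisms (\cref{rigid-characterizations}), so $v=\id_x$, $w=\id_y$, and $f=g$. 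Thus $J$ is a bijective-on-objects fully faithful functor, i.e.\ an isomorphism $\NF{\pgpd\P}\isoto\pgpd\P/W$.

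The argument is essentially all bookkeeping once rigidity is in hand, since the genuine rewriting content (Church--Rosser via \cref{prop:ars-zz-confl}, coherence via \cref{prop:ars-coh}) is already available; the one step deserving care is the surjectivity part of fullness, namely producing a representative in $\C(x,y)$ for an arbitrary quotient morphism between two normal-form objects, which is where the structural morphisms $w_{-,-}$ and rigidity of $\W$ are used. (Alternatively, one can build the inverse of $J$ by hand, sending $[f]\colon[x]\to[y]$ to the composite $\nf x\xrightarrow{\pgpd{n_x}^{-1}}x\xrightarrow{f}y\xrightarrow{\pgpd{n_y}}\nf y$ and checking well-definedness and functoriality via \cref{prop:ars-zz-confl}; the verifications are the same in substance.)
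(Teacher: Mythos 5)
Your proof is correct and follows essentially the same route as the paper: deduce rigidity of $\W$ from $W$-convergence, invoke the explicit description of the quotient, and check that the composite $\NF{\pgpd\P}\hookrightarrow\pgpd\P\to\pgpd\P/W$ is bijective on objects, full and faithful. The only (harmless) variations are that you establish fullness by conjugating with the canonical morphisms $w_{-,-}$ rather than with normalization paths, and faithfulness directly from the fact that a rigid groupoid has no non-identity automorphisms, where the paper re-derives this via the Church--Rosser property.
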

\begin{proof}
  Since~$\P$ is $W$-convergent, by \cref{prop:ars-coh,coherent-rigid}, the groupoid generated by~$W$ is rigid and we thus have the description of the quotient $\pgpd\P/W$ given by \cref{prop:rigid-quotient}.
  We have a canonical functor $\NF{\pgpd\P}\to\pgpd\P/W$, obtained as the composite of the inclusion functor $\NF{\pgpd\P}\to\pgpd\P$ with the quotient functor $\pgpd\P\to\pgpd\P/W$. An object of~$\pgpd\P/W$ is an equivalence class $[x]$ of objects which, by convergence, contains a unique normal form, namely $\nf x$. The functor is bijective on objects.
  By weak normalization (\cref{lem:term-wn}), any morphism~$f:x\to y$ is equivalent to one with both normal source and target, namely $\nf f=n_y\circ f\circ n_x^-:\nf x\to\nf y$, hence the functor is full.
  Consider two morphisms $f,g:\nf x\to\nf y$ in $\NF\P$ with the same image $[f]=[g]$: by definition of the equivalence on morphisms, there exist morphisms $v:\nf x\to\nf x$ and $w:\nf y\to\nf y$ in~$\freegpd W$ making the diagram
  \[
    \begin{tikzcd}
      \nf x\ar[d,"v"']\ar[r,"f"]&\nf y\ar[d,"w"]\\
      \nf x\ar[r,"g"']&\nf y
    \end{tikzcd}
  \]
  commute. By the Church-Rosser property (\cref{prop:ars-cr}), we have $v=n_{\nf x}\circ n_{\nf x}^-$ and thus $v=\id_x$ (since $n_{\nf x}=\id_{\nf x}$ by hypothesis), and similarly $w=\id_y$. Hence $f=g$ and the functor is faithful.
  The functor is thus an isomorphism as being full, faithful and bijective on objects.
\end{proof}

We can finally summarize the results obtained in this section as follows. Given
a \ARS{2}~$\P$ and a set $W\subseteq\P_1$, we have the following possible
reasonable definitions of the fact that~$\P$ is \emph{coherent} \wrt
$W$:
\begin{enumerate}[(1)]
\item \label{ars-cr} Every parallel zig-zags with edges in~$W$ are equal\\
  (\ie the subgroupoid of $\pgpd P$ generated by~$W$ is rigid).
\item \label{ars-q} The quotient map $\pgpd\P\to \pgpd\P/W$ is an equivalence of
  categories.
\item \label{ars-nf} The inclusion~$\NF{\pgpd\P}\to\pgpd\P$ is an equivalence.
\item \label{ars-alg} The inclusion $\Alg(\pgpd\P/W,\D)\to\Alg(\pgpd\P,\D)$ is a
  natural equivalence of categories.
\end{enumerate}

\begin{theorem}
  \label{thm:ars-coh}
  If~$\P$ is $W$-convergent then all the above coherence properties hold.
\end{theorem}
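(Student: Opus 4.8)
The plan is to observe that, once $W$-convergence is unfolded, the theorem is simply an assembly of results already proved in this section, so the proof will be short. First I would note that $W$-convergent means $W$-terminating and $W$-confluent; by \cref{lem:term-wn}, $W$-termination gives that $\P$ is weakly $W$-normalizing. Combining weak $W$-normalization with $W$-confluence, \cref{prop:ars-coh} yields that $\P$ is $W$-coherent, which is precisely property~\eqref{ars-cr}; equivalently, by \cref{coherent-rigid}, the subgroupoid $\W$ of $\pgpd\P$ generated by $W$ is rigid.

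From rigidity of $\W$ the remaining quotient-flavoured properties follow by direct citation. For property~\eqref{ars-q}, I would apply \cref{rigid-equivalence} to $\C=\pgpd\P$ and its rigid subgroupoid $\W$, obtaining that the quotient functor $\pgpd\P\to\pgpd\P/\W$ is an equivalence; and $\pgpd\P/\W$ is $\pgpd\P/W$ by the lemma identifying the quotients by $W$ and by $\W$. For property~\eqref{ars-alg}, I would invoke \cref{prop:alg-str} with the same $\C$ and $\W$: since $\W$ is rigid, the precomposition functors $F_\D\colon\Alg(\pgpd\P/W,\D)\to\Alg(\pgpd\P,\D)$ form a family of equivalences natural in $\D$. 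Property~\eqref{ars-nf} is even cheaper: it needs only weak $W$-normalization, already established, so \cref{normal-subcat-equiv} gives directly that the inclusion $\NF{\pgpd\P}\to\pgpd\P$ is an equivalence (alternatively, using also convergence, \cref{prop:quot-nf} identifies it with the quotient).

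I do not expect any genuine obstacle: every step is a one-line appeal to an earlier proposition, and the only points requiring a little care are bookkeeping ones — keeping the identification of the ``syntactic'' subgroupoid $\pgpd\sfW$ with the subgroupoid $\W\subseteq\pgpd\P$ generated by $W$ (used implicitly through \cref{coherent-rigid}), and the identification $\pgpd\P/W\cong\pgpd\P/\W$. The real content of the theorem has been discharged in the earlier lemmas; the statement here only collects the equivalent forms of ``coherent with respect to $W$'' into a single result.
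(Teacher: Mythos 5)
Your proposal is correct and matches the paper's own proof, which is likewise a one-line assembly: \cref{prop:ars-coh} for \eqref{ars-cr}, \cref{prop:rigid-equivalence} for \eqref{ars-q}, \cref{prop:quot-nf} for \eqref{ars-nf}, and \cref{prop:alg-str} for \eqref{ars-alg}. The only (harmless) difference is that for \eqref{ars-nf} you primarily cite \cref{normal-subcat-equiv}, which indeed needs only weak normalization, whereas the paper cites \cref{prop:quot-nf}.
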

\begin{proof}
  \ref{ars-cr} is given by \cref{prop:ars-coh}, \ref{ars-q} is given by
  \ref{ars-cr} and \cref{prop:rigid-equivalence}, \ref{ars-nf} is given by
  \cref{prop:quot-nf}, and \ref{ars-alg} is given by \ref{ars-cr} and
  \cref{prop:alg-str}.
\end{proof}


\subsection{Presenting the groupoid of normal forms}
We would now like to provide an explicit description of $\NF{\pgpd\P}$, by a \ARS2. A good candidate is the following \ARS2 $\P\setminus W$ obtained by ``restricting $\P$ to normal forms''. More precisely, it consists of
\begin{itemize}
\item $(\P\setminus W)_0$: the objects of $\P\setminus W$ are those of~$\P$ in $W$-normal form,
\item $(\P\setminus W)_1$: the rewriting rules of~$\P\setminus W$ are those of~$\P$ whose source and target are both in~$(\P\setminus W)_0$ (in particular, it does not contain any element of~$W$, thus the notation),
\item $(P\setminus W)_2$: the coherence relations are those of $\P_2$ whose source and target both belong to~$\freegpd{(\P\setminus W)_1}$.
\end{itemize}
It is not the case in general that this \ARS2 presents the category $\NF{\pgpd\P}$, but we will provide here conditions which ensure that this holds, see also~\cite{clerc2015presenting,mimram2017coherent} for alternative conditions. For simplicity, we suppose here that the source and target of every rewriting step in $\P_2$ is a path (as opposed to a zig-zag).

\begin{proposition}
  \label{prop:ars-nf-pres}
  \label{ars-nf-pres}
  Suppose that
  \begin{enumerate}
  \item \label{ars-nf-pres-cv} $\P$ is $W$-convergent,
  \item \label{ars-nf-pres-nt} every rule $a:x\to y$ in~$\P_1$ whose source~$x$
    is $W$-normal also has a $W$-normal target~$y$,
  \item \label{ars-nf-pres-res} for every coinitial rule~$a:x\to y$ in $\P_1$
    and path $w:x\pathto x'$ in $\freecat W$, there are paths $p:x'\pathto y'$
    in $\freecat{\P_1}$ and $w':y\pathto y'\in\freecat W$ such that
    $a\pcomp w'\cohto w\pcomp p$:\TODO{il suffit ici de le supposer pour~$x'$ en forme normale}
    \[
      \begin{tikzcd}
        x\ar[d,"w"',"*"]\ar[r,"a"]&y\ar[d,dotted,"*"',"w'"]\\
        x'\ar[r,dotted,"*","p"']\ar[ur,Leftrightarrow,dotted,"*",shorten=5pt]&y'
      \end{tikzcd}
    \]
  \item \label{ars-nf-pres-res-coh} for every coherence relation
    $A:p\To q:x\pathto y$, and for every path $w:x\pathto x'$, the paths
    $p',q':x'\pathto y'$ in $\freecat\P_1$ and $w':y\pathto y'$ in $\freecat W$
    such that $p\pcomp w'\cohto w\pcomp p'$ and $q\pcomp w'\cohto w\pcomp q'$
    induced by \cref{ars-nf-pres-res} satisfy $p'\cohto q'$:
    \[
      \begin{tikzcd}
        x\ar[d,"w"',"*"]\ar[r,bend left,"p"{name=p}]\ar[r,bend right,"q"'{name=q}]&y\ar[d,dotted,"*"',"w'"]\\
        x'\ar[r,bend left,dotted,"p'"{name=p'}]\ar[r,bend right,dotted,"q'"'{name=q'}]&y'
        \ar[from=p,to=q,Rightarrow,shorten=4pt,"A"']
        \ar[from=p',to=q',Leftrightarrow,dotted,shorten=2pt,"*"']
      \end{tikzcd}
    \]
  \end{enumerate}
  Then $\NF{\pgpd\P}$ is isomorphic to $\pgpd{\P\setminus W}$.
\end{proposition}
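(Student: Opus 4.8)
The plan is to exhibit an explicit isomorphism of categories between $\NF{\pgpd\P}$ and $\pgpd{\P\setminus W}$. Note first that the two categories have the same objects by construction: both are the $W$-normal forms of $\P_0$. So the entire content is to show that the hom-sets agree, i.e.\ that for $x,y$ in $W$-normal form, two zig-zags in $\freegpd{(\P\setminus W)_1}$ are $\cohto$-equivalent in $\pgpd{\P\setminus W}$ if and only if their images are $\cohto$-equivalent in $\pgpd\P$, and moreover that every morphism $\nf x\to\nf y$ of $\pgpd\P$ is realized by a zig-zag avoiding $W$. The natural functor to work with is the canonical $\iota:\pgpd{\P\setminus W}\to\pgpd\P$ induced by the inclusion of polygraphs (every rule and coherence relation of $\P\setminus W$ is one of $\P$); it is identity on objects, and I must show it is full and faithful.

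For \emph{fullness}: take a morphism $f:\nf x\to\nf y$ in $\pgpd\P$, represented by some zig-zag $p:\nf x\zzto\nf y$ in $\freegpd{\P_1}$. I want to rewrite $p$, modulo $\cohto$, into a zig-zag using no rules of $W$. The key device is a \emph{normalization strategy on zig-zags}: I process $p$ from left to right, and whenever a step in $W$ appears (either forward $w^+$ or backward $w^-$), I use hypothesis~\ref{ars-nf-pres-res} to ``commute'' it past the following segment of rules of $\P_1$, replacing the local picture by one where the $W$-step has been pushed to the right; iterating, all $W$-steps migrate to the right end. Since the target $\nf y$ is already normal and $\P$ is $W$-convergent (hypothesis~\ref{ars-nf-pres-cv}), a backward $W$-step at the very right end would have normal source $\nf y$, which by hypothesis~\ref{ars-nf-pres-nt} forces its whole $W$-chain to stay within normal forms; combined with confluence and the Church--Rosser property (\cref{prop:ars-cr}) the accumulated $W$-zig-zag at the right end collapses to an identity modulo $\cohto$. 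One has to be careful that the commutation in~\ref{ars-nf-pres-res} is stated only for a \emph{rule} $a$ coinitial with a $W$-path, so the induction is really on the structure of $p$ as an alternating sequence; the intermediate results $a\pcomp w'\cohto w\pcomp p$ let me replace $w\pcomp a$ (a $W$-step followed by a rule) by $p\pcomp (w')^-$-type data so that $W$-steps only move rightward and never get stuck, using also the ``backward'' form obtained by inverting the square. This yields a zig-zag $p'$ in $\freegpd{(\P\setminus W)_1}$ with $\iota(p')$ $\cohto$-equal to $p$, i.e.\ $\iota$ is full.

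For \emph{faithfulness}: suppose $p',q':\nf x\zzto\nf y$ are zig-zags in $\freegpd{(\P\setminus W)_1}$ with $\iota(p')\cohto\iota(q')$ in $\pgpd\P$. Unwinding the definition of $\cohto$, there is a finite sequence of elementary steps $\Leftrightarrow$ connecting them in $\freegpd{\P_1}$, each either an application of a coherence relation $A\in\P_2$ or a cancellation/insertion of $a^+a^-$. I want to transport this whole derivation into $\pgpd{\P\setminus W}$. The idea is to apply the same normalization-on-zig-zags map to every intermediate zig-zag in the derivation, and to check that each elementary $\Leftrightarrow$-step is respected up to $\cohto$ by the normalized versions: this is exactly what hypothesis~\ref{ars-nf-pres-res-coh} provides for the steps coming from $\P_2$ (it says the commuted-past images $p',q'$ of $p,q$ under a coherence relation remain $\cohto$-related), and the $a^\pm$-cancellations are handled by the zig-zag congruence together with the commutation lemma~\ref{ars-nf-pres-res}. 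Since $p'$ and $q'$ are already in normal form, their normalizations are themselves (modulo $\cohto$ in $\pgpd{\P\setminus W}$), so chaining the $\cohto$-equalities down the derivation gives $p'\cohto q'$ in $\pgpd{\P\setminus W}$. Hence $\iota$ is faithful, and being also bijective on objects, it is an isomorphism of categories.

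The \textbf{main obstacle} I expect is making the ``push all $W$-steps to the right'' procedure on zig-zags precise and showing it is well-defined and terminating: one is essentially defining a normal form for zig-zags modulo $\cohto$, and the commutation square of~\ref{ars-nf-pres-res} only produces \emph{some} $p$ and $w'$, not canonical ones, so one must either make coherent choices or, better, argue purely at the level of $\cohto$-equivalence classes (tracking that each rearrangement changes the class in a controlled, confluent way). The termination measure is delicate because commuting a $W$-step past a rule of $\P_1$ can change the length of the $\P_1$-segment; the right measure is presumably something like ``number of $W$-steps weighted by how far they are from the right end'', decreasing by a well-founded argument that piggybacks on $W$-termination of $\P$. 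Once this normalization is set up cleanly, fullness is almost immediate and faithfulness reduces to the per-step checks supplied by hypotheses~\ref{ars-nf-pres-res} and~\ref{ars-nf-pres-res-coh}.
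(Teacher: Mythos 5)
Your plan matches the paper's proof: the same inclusion functor, fullness via commuting non-$W$ rules past $W$-paths using hypothesis~(3), and faithfulness by transporting each elementary $\Leftrightarrow$-step of a $\cohto$-derivation using hypothesis~(4). The termination worry you flag is sidestepped in the paper by decomposing a zig-zag into alternating blocks $w_0\pcomp a_1\pcomp w_1\pcomp\cdots\pcomp a_n\pcomp w_n$ and inducting on the number $n$ of non-$W$ rules, using $W$-convergence and the Church--Rosser property (\cref{prop:ars-zz-confl}) to collapse each $W$-block to $n_x\pcomp n_{x'}^-$ at once, so no delicate measure on individual $W$-steps is needed.
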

\begin{proof}  
  We write $\Q=\P\setminus W$. Since $\Q$ is, by definition, a sub-\ARS2 of~$\P$
  there is a canonical functor $\pgpd\Q\to\pgpd\P$. Moreover, since the objects
  of $\P$ are, by definition, in $W$-normal form, this functor corestricts to a
  functor $F:\pgpd\Q\to\NF{\pgpd\P}$ which is the identity on objects.

  First, note that condition (\ref{ars-nf-pres-nt}) implies that for any
  path~$p$ of the form
  \[
    \begin{tikzcd}
      x_0\ar[r,"a_1"]&x_1\ar[r,"a_2"]&\cdots\ar[r,"a_n"]&x_n
    \end{tikzcd}
  \]
  with $x_0$ in $W$-normal form, we have that all the $x_i$ are in $W$-normal
  form and thus $p$ belongs to $\freecat{\Q_1}$. Similarly, every coherence
  relation $A:p\To q:x\pathto y$ with $x$ in $W$-normal form belongs to~$\Q_2$.

  We claim that for every zig-zag $p:x\zzto y$ in $\freegpd\P_1$ there is
  zig-zag~$q\in\freegpd\Q_1$ such that $p\cohto n_x\pcomp q\pcomp n_y^-$. We
  have that $p$ is of the form
  $p=w_0\pcomp a_1\pcomp w_1\pcomp a_2\pcomp w_2\pcomp\ldots\pcomp a_n\pcomp
  w_n$ where the $a_i$ are rules in~$\P_1$ which are not in $W$ (possibly taken
  backward) and the $w_i$ are in $\freegpd W$. For instance, consider the case
  $n=1$ and a path~$p$ of the form $p=v\pcomp a\pcomp w$ with $a\in\Q_1$ and
  $v,w\in\freegpd W$ (the case where $a$ is reversed is similar, and the general
  case follows by induction):\TODO{on pourrait le faire step by step sans différencier $a$ dans $W$ ou pas...}
  \[
    \begin{tikzcd}
      x\ar[dr,"n_x"']\ar[rr,"v"]&&\ar[dl,"n_{x'}"]x'\ar[r,"a"]&y'\ar[d,"w'"']\ar[dr,"n_{y'}"']\ar[rr,"w"]&&\ar[dl,"n_y"]y\\
      &\nf x\ar[rr,dotted,"q"']&&y''\ar[r,equals]&\nf y
    \end{tikzcd}
  \]
  By hypothesis~\cref{ars-nf-pres-cv} and \cref{prop:ars-zz-confl}, we have
  $v\cohto n_x\pcomp n_{x'}^-$ and $w\cohto n_{y'}\pcomp n_y^-$.
  By hypothesis~\cref{ars-nf-pres-res}, there exist paths $q:\nf x\pathto y''$
  in $\freecat\P_1$ and $w':y'\pathto y''$ in $\freecat W$ such that
  $a\pcomp w'\cohto n_{x'}\pcomp q$.
  By hypothesis~\cref{ars-nf-pres-nt} and the remark at the beginning of this
  proof, we have that $q\in\freecat{\Q_1}$ and $y''$ is a normal form. By
  \cref{ars-nf-pres-cv}, we thus have $y''=\nf y$ and $w'\cohto n_{y'}$, and we
  conclude.
  As a particular case of the property we have just shown, for any zig-zag
  $p:x\zzto y$ whose source and target are in $W$-normal form, we have that that
  $p$ is equivalent to a zig-zag $q:x\zzto y$ (since in this case both $n_x$ and
  $n_y$ are identities). The functor $F:\pgpd\Q\to\NF{\pgpd\P}$ is thus full.

  In the following, given a path $p:x\to y$ in $\freecat\P_1$, we write
  $\nf p:\nf x\to\nf y$ in $\freecat\Q_1$ for a path such that
  $p=n_x\pcomp\nf p\pcomp n_y^-$. Such a path always exists by the previous
  reasoning and can be constructed in a functorial way (\ie
  $\widehat{p_1\pcomp p_2}=\nf p_1\pcomp\nf p_2$).
  Now, suppose given two zig-zags $p,p':x\zzto y$ in $\freegpd\P_1$ such that
  $p\cohto p'$. The relation $p\cohto p'$ means that there is a sequence
  $p_1,\ldots,p_n$ of zig-zags in $\freegpd\P_1$ such that $p_1=p$, $p_n=p'$ and
  each $p_i$ is related to $p_{i+1}$ by taking a relation in context, as in
  \cref{coh-step}:
  \[
    p=p_1\Leftrightarrow p_2\Leftrightarrow\ldots\Leftrightarrow p_n=p'
  \]
  More formally, for $1\leq i<n$, there is a decomposition
  \begin{align*}
    p_i&=q_i\pcomp r_i\pcomp s_i
    &
    p_{i+1}&=q_i\pcomp r_i'\pcomp s_i
  \end{align*}
  such that there is a relation $A:r_i\To r_i'$ or $A:r_i'\To r_i$ (another
  approach would consist in reasoning by induction on the $2$-cells of the
  freely generated $2$-groupoid of \cref{free-2-groupoid}). By
  hypothesis~\cref{ars-nf-pres-res-coh}, there is a relation
  $\nf r_i\cohto\nf r_i'$, and thus $\nf p_i\cohto\nf p_{i+1}$ by
  functoriality. By recurrence on~$n$, we thus have $\nf p\cohto\nf p'$.
  From this, we deduce that that the functor $F$ is also faithful.
\end{proof}

\noindent
In practice, condition \cref{ars-nf-pres-cv} can be shown using traditional
rewriting techniques (\eg \cref{prop:lc+n-c}) and condition
\cref{ars-nf-pres-nt} is easily checked by direct inspection of the rewriting
rules. We provide below sufficient conditions in order to show the two remaining
conditions:

\begin{proposition}
  \label{ars-nf-pres-local}
  We have the following.
  \begin{enumerate}
  \item[\cref{ars-nf-pres-res}] Suppose that for every coinitial
    rules~$a:x\to y$ in $\P_1$ and $w:x\to x'$ in $W$, there are paths
    $p:x'\pathto y'$ in $\freecat\P_1$ and $w':y\pathto y'$ in $\freecat W$ such
    that $w$ is of length at most one and $a\pcomp w'\cohto w\pcomp p$:
    \begin{equation}
      \label{eq:ars-nf-pres-res}
      \begin{tikzcd}
        x\ar[d,"w"']\ar[r,"a"]&y\ar[d,dotted,"w'"]\\
        x'\ar[r,dotted,"*","p"']&y'
      \end{tikzcd}
      \qquad\qquad
      \text{or}
      \qquad\qquad
      \begin{tikzcd}
        x\ar[d,"w"']\ar[r,"a"]&y\ar[d,dotted,equals]\\
        x'\ar[r,dotted,"*","p"']&y'
      \end{tikzcd}
    \end{equation}
    The condition \cref{ars-nf-pres-res} of \cref{prop:ars-nf-pres} is
    satisfied.
  \item[\cref{ars-nf-pres-res-coh}] Suppose that condition
    \cref{ars-nf-pres-res} is satisfied and for every coherence relation
    $A:p\To q:x\pathto y$ in $\P_2$ and rule $w:x\to x'$ in~$W$, the paths
    $p',q':x'\pathto y'$ in $\freecat\P_1$ and $w':y\pathto y'$ in~$\freecat W$
    of length at most one such that $p\pcomp w'\cohto w\pcomp p'$ and
    $q\pcomp w'\cohto w\pcomp q'$ induced by \cref{ars-nf-pres-res} are such
    that $p'\cohto q'$:
    \begin{equation}
      \label{eq:ars-nf-pres-res-coh}
      \begin{tikzcd}
        x\ar[d,"w"',"*"]\ar[r,bend left,"p"{name=p}]\ar[r,bend right,"q"'{name=q}]&y\ar[d,dotted,"w'"]\\
        x'\ar[r,bend left,dotted,"p'"{name=p'}]\ar[r,bend right,dotted,"q'"'{name=q'}]&y'
        \ar[from=p,to=q,Rightarrow,shorten=4pt,"A"']
        \ar[from=p',to=q',Leftrightarrow,dotted,shorten=2pt,"*"']
      \end{tikzcd}
      \qquad\qquad
      \text{or}
      \qquad\qquad
      \begin{tikzcd}
        x\ar[d,"w"']\ar[r,bend left,"p"{name=p}]\ar[r,bend right,"q"'{name=q}]&y\ar[d,equals]\\
        x'\ar[r,bend left,"p'"{name=p'}]\ar[r,bend right,"q'"'{name=q'}]&y
        \ar[from=p,to=q,Rightarrow,shorten=4pt,"A"']
        \ar[from=p',to=q',Leftrightarrow,dotted,shorten=2pt,"*"']
      \end{tikzcd}
    \end{equation}
    Then condition \cref{ars-nf-pres-res-coh} of \cref{prop:ars-nf-pres} is
    satisfied.
  \end{enumerate}
\end{proposition}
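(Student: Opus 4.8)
The plan for both parts is an induction on the length of the path~$w$, bootstrapping from the length‑at‑most‑one hypotheses. Part~(1) first requires strengthening condition~\cref{ars-nf-pres-res} from a single rule to an arbitrary path of~$\freecat{\P_1}$. I would show, by induction on the length of~$a$, that for every path $a:x\pathto y$ in $\freecat{\P_1}$ and every $w$ of length at most one coinitial with~$a$ there are $p:x'\pathto y'$ in $\freecat{\P_1}$ and $w':y\pathto y'$ in $\freecat W$ \emph{again of length at most one} with $a\pcomp w'\cohto w\pcomp p$: decomposing $a=a_0\pcomp b$ with $a_0$ a rule, one applies~\cref{eq:ars-nf-pres-res} to $(a_0,w)$ to obtain a residual~$v$ of~$w$ which is once more of length at most one — this is exactly why the length clause is needed — and then the induction hypothesis to $(b,v)$, pasting the two coherence cells. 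Writing the outcome as a residual operation $(a,w)\mapsto(a/w,\,w\backslash a)$ for $w$ a step, a second induction on the length of~$w$ extends it to arbitrary $w\in\freecat W$ (decompose $w=w_0\pcomp v$, residuate past~$w_0$ by what precedes, then residuate $a/w_0$ past~$v$ by the induction hypothesis). By construction it satisfies $a\pcomp(w\backslash a)\cohto w\pcomp(a/w)$ together with the evident exchange laws such as $w\backslash(a\pcomp a')=(w\backslash a)\backslash a'$ and $(a\pcomp a')/w=(a/w)\pcomp(a'/(w\backslash a))$; taking~$a$ to be a rule yields condition~\cref{ars-nf-pres-res} of~\cref{prop:ars-nf-pres}.

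For part~(2) I would keep the residual operation just built and prove, by a \emph{simultaneous} induction on the length of~$w$, the two statements: (i)~for $\cohto$‑equivalent paths $p\cohto q$ in $\freecat{\P_1}$ and any~$w\in\freecat W$ one may choose residuals so that $w\backslash p=w\backslash q$ and $p/w\cohto q/w$; (ii)~for a coherence relation $A:r\To r'$ in~$\P_2$ and any~$w\in\freecat W$ one has $w\backslash r=w\backslash r'$ and $r/w\cohto r'/w$. The base case (length~$0$) is immediate, and~\cref{eq:ars-nf-pres-res-coh} is precisely statement~(ii) when $w$ has length at most one. In the inductive step write $w=w_0\pcomp v$. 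For~(ii): apply~\cref{eq:ars-nf-pres-res-coh} to the step~$w_0$ and the relation~$A$ to get a common residual $w_0\backslash r=w_0\backslash r'$ of length at most one together with $r/w_0\cohto r'/w_0$, then apply~(i) at the strictly smaller length~$|v|$ to the coherent paths $r/w_0,\,r'/w_0$ and the path~$v$, and paste via the exchange laws. For~(i): by transitivity and congruence of~$\cohto$ reduce to $p\Leftrightarrow q$, say $p=p_0\pcomp r\pcomp p_2$ and $q=p_0\pcomp r'\pcomp p_2$ with $A:r\To r'$ in~$\P_2$; residuate $w$ past the context~$p_0$, obtaining $v_0=w\backslash p_0$ of length $\le|w|$, then apply~(ii) to the relation~$A$ and~$v_0$ — this is available since~(ii) at the current length has already been established in the very same step, before~(i), and~(ii) at smaller lengths is the induction hypothesis — and conclude with the exchange laws. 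Specialising~(ii) recovers condition~\cref{ars-nf-pres-res-coh} of~\cref{prop:ars-nf-pres}.

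The main obstacle is precisely this bookkeeping: organising the mutual induction so that neither (i) nor~(ii) is invoked before it is available, while maintaining the invariant that residuals of single steps stay of length at most one, so that~\cref{eq:ars-nf-pres-res,eq:ars-nf-pres-res-coh} — which speak only about length $\le 1$ — remain applicable at every stage. That invariant is exactly what the length‑at‑most‑one clauses in the hypotheses are there to guarantee; and the reason the simultaneous induction closes is that, although residuating a $W$‑path past a context~$p_0$ need not shorten it, reducing a relation‑in‑context to the bare relation does not lengthen it, and the subsequent splitting $w=w_0\pcomp v$ strictly shortens it. Everything else — pasting coherence cells, checking the exchange laws, and passing from a single relation to a general~$\cohto$ — is routine.
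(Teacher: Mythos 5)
Your proposal is correct and follows the same route as the paper, whose entire proof is the single sentence that ``both properties are easily shown by recurrence on the length of~$w$''; your argument is precisely that recurrence, carried out in full. The extra structure you supply --- the auxiliary induction extending residuation from a single rule to a path of $\freecat{\P_1}$ (where the length-at-most-one clause keeps the induction well-founded), and the ordering of the mutual induction between statements~(i) and~(ii) --- is exactly the bookkeeping the paper leaves implicit, and you have organised it soundly.
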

\begin{proof}
  Both properties are easily shown by recurrence on the length of~$w$.
\end{proof}

\section{Relative coherence for Lawvere theories}
\label{sec:coh-lt}
In order to use the developments of \cref{sec:ars} in concrete situations, such as (symmetric) monoidal categories, we need to consider a more structured notion of theory. For this reason, we consider here Lawvere 2-theories, as well as the adapted notion of rewriting, which is a coherent extension of the traditional notion of term rewriting systems.

\subsection{Lawvere 2-theories}
We begin by recalling the traditional notion due to
Lawvere~\cite{lawvere1963functorial}:

\begin{definition}
  A \emph{Lawvere theory}~$\T$ is a cartesian category, with $\N$ as set of
  objects, and cartesian product given on objects by addition.
  A morphism between Lawvere theories is a product-preserving functor and we
  write $\nLaw1$ for the category of Lawvere theories.
\end{definition}

\noindent
For simplicity, we restrict here to unsorted theories, but the developments
performed here could easily adapted to the multi-sorted case. In such a theory,
we usually restrict our attention to morphisms with $1$ as codomain, since
$\T(n,m)\isoto\T(n,1)^m$ by cartesianness.

A \emph{$(2,1)$-category} is a $2$-category in which every $2$-cell is invertible, \ie a category enriched in groupoids. The following generalization of Lawvere theories was introduced in various places, see~\cite{gray19732,yanofsky2000syntax,yanofsky2001coherence}, as well as~\cite{power1999enriched} for the enriched point of view:

\begin{definition}
  A \emph{Lawvere 2-theory}~$\T$ is a cartesian $(2,1)$-category with~$\N$ as objects, and cartesian product given on objects by addition.
  A morphism $F:\T\to\U$ between $2$-theories is a $2$-functor which preserves products. We write $\nLaw2$ for the resulting category (which can be extended to a $3$-category by respectively taking natural transformations and modifications as $2$- and $3$-cells).
\end{definition}

We can reuse the properties developed in \cref{sec:coh-cat,sec:ars} by working ``hom-wise'' as follows.
Let~$\T$ be a $2$-theory together with a subset~$W$ of the $2$-cells. We write $\W$ for the sub-$2$-theory of~$\T$, with the same $0$- and $1$-cells, and whose $2$-cells contain~$W$ (we often assimilate this $2$-theory to its set of $2$-cells).
A morphism $F:\T\to\U$ of Lawvere $2$-theories is \emph{$W$-strict} when it sends every $2$-cell in $W$ to an identity.

\begin{definition}
  The \emph{quotient $2$-theory} $\T/W$ is the theory equipped with a $W$-strict
  morphism $\T\to\T/W$ such that every $W$-strict morphism $F:\T\to\U$ extends
  uniquely as a morphism $\T/W\to\U$:
  \[
    \begin{tikzcd}
      \T\ar[d]\ar[r,"F"]&\U\\
      \T/W\ar[ur,dotted,"\tilde F"']
    \end{tikzcd}
  \]
\end{definition}

\noindent
Such a quotient $2$-theory always exists by general arguments: $2$-theories form a locally presentable category, which therefore has colimits, and thus coequalizers~\cite{rosicky1994locally}. We have $\T/W\isoto\T/\W$, so that we can always assume that we are quotienting by a sub-$2$-theory. On hom-categories, the quotient corresponds to the one introduced in \cref{sec:ars-quot}:

\begin{lemma}
  For every $m,n\in\N$, we have
  \[
    (\T/\W)(m,n)=\T(m,n)/\W(m,n)
    \text.
  \]
\end{lemma}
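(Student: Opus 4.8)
The statement claims that forming the quotient $2$-theory commutes with taking hom-categories, in the sense that $(\T/\W)(m,n) = \T(m,n)/\W(m,n)$, where the right-hand quotient is the one of \cref{quotient-category} (equivalently \cref{rigid-quotient-category} when $\W(m,n)$ is rigid, though we should not assume rigidity here). The plan is to verify this via the universal properties: I would show that the category $\T(m,n)/\W(m,n)$, assembled over all $m,n$, carries the structure of a $2$-theory satisfying the universal property of $\T/\W$, and then invoke uniqueness of objects defined by a universal property. Concretely, let $\T'$ be the cartesian $(2,1)$-category with the same $0$-cells and $1$-cells as $\T$, and with $\T'(m,n) = \T(m,n)/\W(m,n)$ as hom-category; the canonical functors $\T(m,n) \to \T(m,n)/\W(m,n)$ assemble into a $2$-functor $Q:\T \to \T'$ which is the identity on $0$- and $1$-cells.

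First I would check that $\T'$ is a well-defined $2$-theory: composition of $2$-cells must descend to the quotient hom-categories, and the cartesian structure (product on objects given by addition, the projections and pairing) must still be available. Since $Q$ is the identity on $0$- and $1$-cells, the projection $1$-cells and the universal property of products at the level of $1$-cells are inherited verbatim; what needs checking is that the whiskering/horizontal-composition operations (which turn composition in $\T$ into a $2$-functor) are compatible with the congruences generated by the $W$-cells in each hom. This is where I would use that $\W$ is a sub-$2$-theory, so its $2$-cells are closed under the theory's horizontal composition and whiskering: whiskering a $W$-cell by a $1$-cell again lands in $\W$, hence gets sent to an identity, so horizontal composition passes to the quotient. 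This makes $Q:\T\to\T'$ a morphism of $2$-theories, and it is $W$-strict by construction (each $w\in W$ maps to $[w] = [\id]$ in the relevant quotient hom-category, by the same diagram as in \cref{prop:rigid-quotient}).

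Next I would verify the universal property. Given any $W$-strict morphism $F:\T\to\U$, I need a unique $\tilde F:\T'\to\U$ with $\tilde F\circ Q = F$. On $0$- and $1$-cells $\tilde F$ must agree with $F$ since $Q$ is the identity there; on each hom-category, $F$ restricts to a functor $\T(m,n)\to\U(Fm,Fn)$ which is $W(m,n)$-strict (it sends each $w\in W(m,n)$ to an identity), hence by the universal property of the hom-quotient $\T(m,n)/\W(m,n)$ (\cref{quotient-category}) it factors uniquely through $Q$, giving $\tilde F$ on homs; these assemble into a $2$-functor because $F$ does and the factorizations are compatible with composition and whiskering by uniqueness; and $\tilde F$ preserves products since $F$ does and products are detected on $0$- and $1$-cells. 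Uniqueness of $\tilde F$ follows hom-by-hom from the uniqueness clause of \cref{quotient-category}. Thus $\T'$ has the universal property of $\T/W$, hence $\T/W \isoto \T'$, i.e. $(\T/\W)(m,n) = \T(m,n)/\W(m,n)$ (using $\T/W\isoto\T/\W$ already noted above).

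The main obstacle I anticipate is the bookkeeping around the cartesian structure: one must make sure that "quotient hom-wise" genuinely produces a \emph{cartesian} $(2,1)$-category and not just a $(2,1)$-category, i.e. that the $2$-dimensional universal property of products (the invertible $2$-cells exhibiting $\T(k, m+n) \simeq \T(k,m)\times\T(k,n)$) survives the quotient. Since the quotient only modifies $2$-cells, and does so compatibly with all the structure maps because $\W$ is a sub-$2$-theory (closed under the relevant horizontal operations), this goes through, but it is the point that requires care rather than the purely formal universal-property manipulation.
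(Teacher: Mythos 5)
Your overall strategy --- realize the hom-wise quotient as a cartesian $(2,1)$-category and check that it satisfies the universal property defining $\T/\W$ --- is the right one, and it is essentially the argument the paper leaves implicit (the lemma is stated there without proof). However, there is a concrete error at the start that propagates: the quotient $2$-functor $Q:\T\to\T/\W$ is \emph{not} the identity on $1$-cells, so your $\T'$ cannot have ``the same $0$-cells and $1$-cells as $\T$'' while also having $\T'(m,n)=\T(m,n)/\W(m,n)$. A $W$-strict morphism sends each $2$-cell $w:f\To g$ of $W$ to an identity, which forces $Ff=Fg$; accordingly the objects of the quotient hom-category $\T(m,n)/\W(m,n)$ of \cref{quotient-category} are $\sim_\W$-equivalence classes of $1$-cells, not the $1$-cells themselves. (This is not a degenerate phenomenon: for $\Mon$ the associator connects the two bracketings of a triple product, and identifying them is the whole point of the quotient.) Consequently the $1$-cells of $\T'$ must be taken to be these equivalence classes, and the places where you lean on ``$Q$ is the identity on $1$-cells'' need to be re-argued: the projections and the $1$-dimensional part of the product structure are not inherited ``verbatim'' but must be checked to descend (which they do, since $\sim_\W$ is a congruence for horizontal composition); and $\tilde F$ is not forced to agree with $F$ on $1$-cells by an identity-on-cells argument, but rather is uniquely determined because $Q$ is surjective on $1$-cells, with well-definedness of $\tilde F[f]:=Ff$ coming from $W$-strictness of $F$.

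With that correction the rest of your argument goes through. The key inputs are exactly the ones you identify: $\W$ being a sub-$2$-theory makes the generated congruence stable under whiskering and horizontal composition, so $\T'$ is a well-defined $(2,1)$-category and $Q$ a $2$-functor; cartesianness survives because $\W(k,m+n)\cong\W(k,m)\times\W(k,n)$ and quotients of categories commute with finite products (a small universal-property check worth stating explicitly, rather than folding into ``products are detected on $0$- and $1$-cells'', since the $2$-dimensional universal property of the product genuinely involves the $2$-cells); and the factorization of a $W$-strict $F$ is obtained hom-by-hom from \cref{quotient-category}, with compatibility with composition following from fullness and surjectivity on objects of the hom-wise quotient functors.
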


We say that a morphism
\[
  F:\T\to\U
\]
is a \emph{local equivalence} when for every objects $m,n\in\T$, the induced functor
\[
  F_{m,n}:\T(m,n)\to\U(m,n)
\]
between hom-categories is an equivalence.

\begin{definition}
  A theory $\W$ is \emph{$2$-rigid} when any two parallel $2$-cells are equal.
\end{definition}

\noindent
Note that a theory $\W$ is $2$-rigid if and only if the category $\W(m,n)$ is rigid for every $0$-cells~$m$ and~$n$.
By direct application of \cref{prop:rigid-equivalence}, we have

\begin{theorem}
  \label{prop:rigid-2equivalence}
  The quotient $2$-functor $\T\to\T/\W$ is a local equivalence iff $\W$ is $2$-rigid.
\end{theorem}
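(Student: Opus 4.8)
The plan is to reduce the statement to the already-established one-categorical result, \cref{prop:rigid-equivalence}, by working hom-category-wise. The quotient $2$-functor $\T\to\T/\W$ is the identity on $0$-cells, so "local equivalence" is by definition the assertion that for all $m,n\in\N$ the functor $\T(m,n)\to(\T/\W)(m,n)$ is an equivalence of categories. By the preceding lemma this functor is exactly the quotient functor $\T(m,n)\to\T(m,n)/\W(m,n)$ in the sense of \cref{sec:ars-quot}, where $\W(m,n)$ is the subgroupoid of $\T(m,n)$ generated by the $2$-cells of $\W$. (It is a groupoid since $\T$ is a $(2,1)$-category.) Hence, by \cref{prop:rigid-equivalence} applied with $\C=\T(m,n)$ and $\W=\W(m,n)$, this functor is an equivalence if and only if $\W(m,n)$ is rigid.

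First I would make explicit the chain of equivalences: $\T\to\T/\W$ is a local equivalence $\iff$ $\T(m,n)\to\T(m,n)/\W(m,n)$ is an equivalence for all $m,n$ $\iff$ $\W(m,n)$ is rigid for all $m,n$ $\iff$ $\W$ is $2$-rigid. The first $\iff$ is the definition of local equivalence together with the preceding lemma identifying $(\T/\W)(m,n)$ with $\T(m,n)/\W(m,n)$; the second is \cref{prop:rigid-equivalence}; and the third is precisely the remark stated immediately before the theorem, namely that $\W$ is $2$-rigid iff each $\W(m,n)$ is rigid. Assembling these gives the theorem.

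The one point that needs a word of care — and which I would flag explicitly — is that \cref{prop:rigid-equivalence} is stated for a category $\C$ equipped with a subgroupoid $\W$, so one must check that the hypotheses apply: the hom-category $\T(m,n)$ is an ordinary category, and the $2$-cells of $\W$ generate a genuine subgroupoid of it because in a $(2,1)$-category all $2$-cells are invertible, so being closed under composition and inverses is automatic. There is no real obstacle here; the proof is essentially a bookkeeping argument that the relevant data restricts hom-wise, and the only mild subtlety is ensuring that "quotient" and "rigid" in the $2$-theory sense are literally the hom-wise versions of the notions from \cref{sec:coh-cat}, which is exactly what the preceding lemma and remark supply. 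I would therefore keep the proof to two or three sentences citing \cref{prop:rigid-equivalence} and the preceding lemma.

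\begin{proof}
  Since the quotient $2$-functor $\T\to\T/\W$ is the identity on objects, it is a
  local equivalence if and only if, for all $m,n\in\N$, the functor
  $\T(m,n)\to(\T/\W)(m,n)$ is an equivalence of categories. By the preceding
  lemma, this functor is the quotient functor $\T(m,n)\to\T(m,n)/\W(m,n)$, where
  $\W(m,n)$ is the subgroupoid of $\T(m,n)$ generated by the $2$-cells of $\W$
  (a subgroupoid, since every $2$-cell of $\T$ is invertible). By
  \cref{prop:rigid-equivalence}, this quotient functor is an equivalence if and
  only if $\W(m,n)$ is rigid. Finally, $\W$ is $2$-rigid precisely when
  $\W(m,n)$ is rigid for all $m,n$, which yields the claim.
\end{proof}
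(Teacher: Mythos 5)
Your proof is correct and takes exactly the route the paper intends: the paper derives this theorem "by direct application" of \cref{prop:rigid-equivalence}, using the preceding lemma identifying $(\T/\W)(m,n)$ with $\T(m,n)/\W(m,n)$ and the observation that $2$-rigidity is hom-wise rigidity, which is precisely your chain of equivalences. Your version just spells out the bookkeeping the paper leaves implicit.
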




\subsection{Algebras for Lawvere 2-theories}
The notion of algebra for $2$-theories was extensively studied by
Yanofsky~\cite{yanofsky2000syntax,yanofsky2001coherence}, we refer to his work
for details.

\begin{definition}
  An \emph{algebra} for a Lawvere $2$-theory~$\T$ is a $2$-functor $C:\T\to\Cat$ which preserves products. By abuse of notation, we often write $C$ instead of $C\,1$ and suppose that products are strictly preserved, so that $Cn=C^n$.
\end{definition}

\begin{definition}
  A \emph{pseudo-natural transformation} $F:C\To D$ between algebras~$C$ and~$D$ consists in a functor $F:C\to D$ together with a family $\phi_f:Df\circ F^n\To F\circ Cf$ of natural transformations indexed by
  $1$-cells $f:n\to 1$ in $\T$,
  \[
    \begin{tikzcd}
      C^n\ar[d,"F^n"']\ar[r,"Cf"]&C\ar[d,"F"d]\\
      D^n\ar[r,"Df"']\ar[ur,Rightarrow,shorten=2ex,"\phi_f"]&D
    \end{tikzcd}
  \]
  which is compatible with products, composition and $2$-cells of~$\T$.
\end{definition}

\begin{definition}
  A \emph{modification} $\mu:F\TO G:C\To D$ between two pseudo-natural transformations is a natural transformation $\mu:F\TO G$ which is compatible with $2$-cells of~$\T$. We write $\Alg(\T)$ for the 2-category of algebras of a $2$-theory~$\T$, pseudo-natural transformations and modifications.
\end{definition}

\begin{example}
  \label{ex:trs-alg}
  Consider the \TRS{2} $\Mon$ of \cref{ex:rs-mon} below. The $2$-category $\Alg(\pgpd\Mon)$ of algebras of the presented $2$-theory is isomorphic to the 2-category~$\MonCat$ of monoidal categories, strong monoidal functors and monoidal natural transformations. It might be surprising that~$\Mon$ has five coherence relations whereas the traditional definition of monoidal categories only features two axioms, which correspond to the coherence relations~$A$ and~$C$. There is no contradiction here: the commutation of the two axioms can be shown to imply the one of the three other~\cite{guiraud2012coherence,kelly1964maclane}.

  Similarly, writing $\W=\pgpd\Mon$, the 2-category $\Alg(\pgpd\Mon/\W)$ is the sub-2-category of $\Alg(\pgpd\Mon)$ whose algebras are monoidal categories where the coherence natural transformations are identities, \ie the 2-category $\StrMonCat$ of strict monoidal categories.
\end{example}

We conjecture that one can generalize the classical proof that any monoidal category is monoidally equivalent to a strict one~\cite[Theorem XI.3.1]{maclane1998categories} to show the following general~(C3) coherence theorem, as well as its (C4) generalization:

\begin{conjecture}[C3]
  \label{conj:strict-equiv}
  When $\W$ is 2-rigid, every~$\T$-algebra is equivalent to a $\T/\W$ algebra.
\end{conjecture}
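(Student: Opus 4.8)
\noindent
The plan is to adapt Mac Lane's strictification argument for monoidal categories (\cite[Theorem~XI.3.1]{maclane1998categories}) to an arbitrary $2$-theory, with the $2$-cells of~$\W$ playing the role of the associators and unitors. Fix a $\T$-algebra $C\colon\T\to\Cat$ and write $F\colon\T\to\T/\W$ for the quotient $2$-functor. I would first construct an ``inflated'' algebra $\widehat C\colon\T\to\Cat$, again a product-preserving $2$-functor, which is moreover $W$-strict (it sends every $2$-cell of~$W$ to an identity), together with an equivalence $C\simeq\widehat C$ in the $2$-category $\Alg(\T)$, i.e.\ a pseudo-natural transformation admitting a pseudo-inverse. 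By the universal property of $\T/\W$, the $W$-strict algebra $\widehat C$ then factors uniquely through~$F$ as $\widehat C=C'\circ F$ for some product-preserving $2$-functor $C'\colon\T/\W\to\Cat$, and the equivalence $C\simeq C'\circ F$ exhibits $C$ as equivalent, in $\Alg(\T)$, to the $\T$-algebra $C'\circ F$ arising from a $\T/\W$-algebra, which is the content of the statement.

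For~$\widehat C$ I would use the $2$-theoretic analogue of Mac Lane's category of words. Take $\widehat C(1)$ to be the category whose objects are pairs $(f,\vec x)$, where $f\colon n\to 1$ is a $1$-cell of~$\T$ chosen from a fixed set of representatives for the classes of $1$-cells identified by the $2$-cells of~$\W$ (a $W$-normal form, in the rewriting picture) and $\vec x\in C(n)$, and whose morphisms $(f,\vec x)\to(g,\vec y)$ are the morphisms $C(f)(\vec x)\to C(g)(\vec y)$ of~$C(1)$. The projection $(f,\vec x)\mapsto C(f)(\vec x)$ is an equivalence $\widehat C(1)\to C(1)$, being fully faithful by construction and essentially surjective. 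One extends this to a product-preserving $2$-functor with $\widehat C(m)=\widehat C(1)^m$, by letting a $1$-cell $h\colon m\to 1$ of~$\T$ send $((f_i,\vec x_i))_i$ to the pair consisting of the chosen representative of $h\circ(f_1\times\dots\times f_m)$ and of $(\vec x_1,\dots,\vec x_m)$, with the effect on morphisms and on $2$-cells of~$\T$ induced by~$C$ together with the (unique, by $2$-rigidity) $2$-cells of~$\W$ comparing representatives. What makes this well defined, and what makes $\widehat C$ be $W$-strict, is precisely $2$-rigidity of~$\W$: a $2$-cell of~$\W$ sends a $1$-cell to another with the same representative, so its image under~$\widehat C$ has components that are images under~$C$ of endo-$2$-cells of~$\W$, hence identities.

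It then remains to assemble the equivalence $C\simeq\widehat C$ in $\Alg(\T)$: its underlying functor is a section of the projection $\widehat C(1)\to C(1)$ above, and the structure $2$-cells $\phi_f$ of the pseudo-natural transformation are built from the images under~$C$ of the $2$-cells of~$\W$ relating the chosen representatives. Naturality of $(\phi_f)_f$ in~$f$, and the modification axioms, reduce to equalities between parallel composites of such images, which hold because parallel $2$-cells of~$\W$ are equal and $C$ is a $2$-functor. This is exactly the mechanism by which local rigidity is expected to annihilate the higher coherence obstructions, just as it does at dimension~$1$ in \cref{prop:alg-str} and \cref{rigid-characterizations}.

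The step I expect to be the main obstacle is this last verification carried out in full: that the decorated-object construction is functorial over all of~$\T$ and not merely over~$\W$, that it preserves products strictly, and that the cells $\phi_f$ satisfy all the pseudo-naturality and modification axioms, uniformly in an arbitrary $2$-theory~$\T$ and without presupposing a convergent presentation. A more abstract route would instead observe that, by \cref{prop:rigid-2equivalence}, the quotient $2$-functor $\T\to\T/\W$ is the identity on objects and a local equivalence, hence a biequivalence, and therefore admits a pseudo-inverse; the difficulty there is that this pseudo-inverse need not preserve products on the nose, so it has to be rectified, which seems to require the pseudomorphism-classifier and flexible-algebra techniques of Power, Lack and Shulman (cf.\ the discussion of \cite{power1989general,lack2002codescent,shulman2012not} in the introduction) and departs from the elementary, rewriting-flavoured spirit of the paper. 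In either approach one should also heed the subtlety noted after \cref{prop:equiv-alg}: producing the cells $\phi_f$ for a fixed algebra is routine, but making the assignment $C\mapsto\widehat C$ suitably pseudo-functorial --- which is what the stronger \cref{conj:strict-adj} would demand --- calls for extra care.
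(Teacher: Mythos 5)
First, an important point of comparison: the paper does not prove this statement. It is stated as \cref{conj:strict-equiv}, explicitly left for future work, with only the remark that one expects to generalize Mac Lane's strictification argument \cite[Theorem XI.3.1]{maclane1998categories}. Your overall reduction --- build a $W$-strict algebra $\widehat C$ with an equivalence $C\simeq\widehat C$ in $\Alg(\T)$, then factor $\widehat C$ through the quotient by its universal property --- is exactly the route the paper envisages, and it is the right shape of argument. So there is no proof in the paper to measure yours against; what can be assessed is whether your sketch of $\widehat C$ is sound.

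As written, it is not, and the failure is more specific than the ``tedious verification'' you defer. You define $\widehat C(h)$, for $h\colon m\to 1$, by sending $((f_i,\vec x_i))_i$ to the pair consisting of the chosen representative of $h\circ(f_1\times\dots\times f_m)$ and of the concatenated datum $(\vec x_1,\dots,\vec x_m)$. But composition in a Lawvere $2$-theory is substitution $g\circ\langle h_1,\dots,h_k\rangle$ with all $h_i\colon m\to 1$, and this breaks strict $2$-functoriality of $\widehat C$ already for the theory of monoids. Take $g=m$ and $h_1=h_2=\id_1$, so that $g\circ\langle h_1,h_2\rangle=m\circ\Delta\colon 1\to 1$ is the squaring $1$-cell. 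Then $\widehat C(m)\circ\widehat C(\Delta)$ sends $(f,\vec x)$ with $f\colon n\to 1$ to a pair whose first component has arity $2n$ and whose second component is $(\vec x,\vec x)\in C(2n)$, whereas $\widehat C(m\circ\Delta)$ sends it to a pair with an $n$-ary first component and datum $\vec x\in C(n)$. These are distinct objects of $\widehat C(1)$ --- their first components are not even parallel, so no $2$-cell of $\W$ can reconcile them --- and $\widehat C$ is therefore only a pseudofunctor, precisely the failure mode the paper isolates in its discussion of Yanofsky's Proposition~6 (choices of representatives are pseudo-, not strictly, functorial). Mac Lane's word construction dodges this because the tensor of a word with itself \emph{is} the duplicated word; to mimic it in the cartesian setting one must first linearize $h$, replacing it by an affine $1$-cell precomposed with a renaming (as in \cref{smon-affine-lift}) and letting the renaming dictate how the $C$-data is duplicated or erased, and then prove that this linearization is strictly compatible with substitution. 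That is the genuinely new content the conjecture demands in the cartesian setting, and it is absent from your sketch. Once the construction is repaired, your remaining acknowledged gaps (pseudo-naturality of the $\phi_f$, the modification axioms) do look like routine consequences of $2$-rigidity, and your diagnosis of the obstacle in the abstract route via \cref{prop:rigid-2equivalence} --- that the pseudo-inverse need not preserve products strictly --- is correct.
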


\begin{conjecture}[C4]
  \label{conj:strict-adj}
  When $\W$ is 2-rigid, the 2-functor $\Alg(\T/\W)\to\Alg(\T)$ induced by precomposition with the quotient 2-functor $\T\to\T/\W$ has a left adjoint such that the components of the unit are equivalences.
\end{conjecture}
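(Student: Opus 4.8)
The plan is to reduce \cref{conj:strict-adj} to the general coherence theorem for pseudoalgebras over a $2$-monad, in the vein of Power~\cite{power1989general} and Lack~\cite{lack2002codescent} already invoked in the introduction, after presenting $\T/\W$ by a suitable $2$-monad on~$\Cat$.

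First I would record an easy ``weak'' version. Since $\W$ is $2$-rigid, \cref{prop:rigid-2equivalence} says that the quotient $2$-functor $Q\colon\T\to\T/\W$ is a local equivalence; as it is moreover the identity on $0$-cells, it is a biequivalence of $(2,1)$-categories. By standard $2$-category theory, precomposition with~$Q$ then induces a biequivalence between the $2$-categories of $\Cat$-valued $2$-functors, pseudo-natural transformations and modifications, and this restricts to a biequivalence $Q^*\colon\Alg(\T/\W)\to\Alg(\T)$: being product-preserving is preserved and reflected along $Q^*$ because $Q$ is bijective on objects and strictly compatible with the product structure given by addition, and the two algebra $2$-categories are \emph{full} sub-$2$-categories of the ambient $2$-functor $2$-categories. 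A biequivalence is in particular a biadjoint biequivalence, so $Q^*$ admits a left biadjoint~$L$ with unit $\eta_C\colon C\to Q^*(LC)=LC\circ Q$ an equivalence in~$\Alg(\T)$; this already yields \cref{conj:strict-equiv} (take $LC$) together with a bicategorical form of \cref{conj:strict-adj}.

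To obtain the \emph{strict} left $2$-adjoint of the statement --- the one produced by MacLane's actual strictification~\cite[Theorem~XI.3.1]{maclane1998categories}, and from which coherence theorems are usually harvested --- I would instead proceed via $2$-monads. Using the correspondence between finitary Lawvere $2$-theories and finitary $2$-monads on~$\Cat$, present $\T/\W$ by a finitary $2$-monad~$T$, so that $\Alg(\T/\W)\simeq T\text{-}\mathbf{StrAlg}$. The crucial step is then to identify
\[
  \Alg(\T)\simeq T\text{-}\mathbf{PsAlg}
\]
in such a way that, under the two identifications, the $2$-functor $\Alg(\T/\W)\to\Alg(\T)$ of the statement becomes the canonical inclusion $T\text{-}\mathbf{StrAlg}\to T\text{-}\mathbf{PsAlg}$. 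Granting this, the conclusion is immediate from the general coherence theorem for $2$-monads: $T$ is a finitary $2$-monad on the cocomplete $2$-category~$\Cat$, hence $T\text{-}\mathbf{StrAlg}\to T\text{-}\mathbf{PsAlg}$ has a left $2$-adjoint (constructed from codescent objects and flexible replacements as in~\cite{lack2002codescent}) whose unit components are internal equivalences in $T\text{-}\mathbf{PsAlg}\simeq\Alg(\T)$, which is exactly \cref{conj:strict-adj}.

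The main obstacle is this identification $\Alg(\T)\simeq T\text{-}\mathbf{PsAlg}$, and it is here that $2$-rigidity of~$\W$ is essential. A pseudo-$T$-algebra structure on a category~$X$ supplies operations and structural $2$-cells indexed by the $1$- and $2$-cells of~$\T/\W$, with the composition and unit laws holding only up to coherent invertible $2$-cells; read off over~$\T$, this gives a priori only a product-preserving \emph{pseudo}functor $\T\to\Cat$, which must then be strictified, coherently and functorially, to a genuine $2$-functor. Conversely, a $\T$-algebra has to be shown to determine a pseudo-$T$-algebra; $2$-rigidity guarantees that $Q$ loses no information (it is a local equivalence), so that the ``discrepancy'' $2$-cells one is forced to insert when transporting the $\W$-part of the structure along~$Q$ are uniquely determined and automatically coherent. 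Making this precise seems most natural through the rewriting presentation of~$\T/\W$ developed in \cref{sec:trs} --- the ``restriction to normal forms'' of \cref{prop:ars-nf-pres} --- and amounts to a relative, coherent analogue of MacLane's strictification argument; I expect this to be the real content of a proof, and it is what is left for future work.
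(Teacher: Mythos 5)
First, note that the statement you are proving is left as a conjecture in the paper: no proof is given there, and the author explicitly defers it to future work. So the question is whether your proposal closes the gap, and it does not.

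The first paragraph contains a genuine error, and it is precisely the one the paper devotes a remark to warning against. From \cref{prop:rigid-2equivalence} you only get that $Q\colon\T\to\T/\W$ is a biessentially surjective (indeed bijective-on-objects) local equivalence; this does \emph{not} make it a biequivalence of $2$-categories, because a strict $2$-functorial pseudo-inverse need not exist — only a pseudofunctor one. The paper's counterexample (the one-object $2$-categories with endomorphism monoids $\N$ and $\N/2\N$) is bijective on $0$-cells and a local equivalence but not a biequivalence, and the phenomenon occurs already in the motivating case $\T=\W=\pgpd\Mon$: a strict product-preserving section $\T/\W\to\T$ would have to choose a bracketing of each word strictly compatibly with substitution, which is impossible (substituting a right comb into a right comb is not a right comb). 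Consequently your ``easy weak version'' is not established by this argument; showing that $Q^*\colon\Alg(\T/\W)\to\Alg(\T)$ is even a biequivalence is essentially the content of \cref{conj:strict-equiv,conj:strict-adj}, not a formal consequence of $2$-rigidity.

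The second route, via a finitary $2$-monad $T$ presenting $\T/\W$ and the Power--Lack coherence theorem, is a sensible plan and is the one the introduction gestures at. But, as you yourself concede, the load-bearing step — the identification $\Alg(\T)\simeq T\text{-}\mathbf{PsAlg}$ compatibly with the inclusion of strict algebras — is exactly where all the difficulty lives: pseudo-$T$-algebras are the \emph{unbiased} weak structures (for monoids, unbiased monoidal categories, i.e.\ the paper's (M4'), which it carefully distinguishes from (M4)), so this step is the biased/unbiased comparison plus a relative strictification, and it is also where $2$-rigidity of $\W$ must actually be used. Two further points would need care even granting it: a strict $2$-adjunction transported along a biequivalence of $2$-categories yields a priori only a biadjunction, whereas the conjecture asks for a left $2$-adjoint to the specific $2$-functor induced by $Q$; and the comparison must respect product-preservation on the nose. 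As it stands the proposal is an outline whose central step is acknowledged to be open, so it does not constitute a proof of \cref{conj:strict-adj}.
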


\noindent
By \cref{ex:trs-alg}, the conjectures would allow recovering the coherence theorem for monoidal categories by taking $\W=\T=\pgpd\Mon$.
We will see in \cref{smc-coh} that the conjectures would also allow recovering the coherence theorem for symmetric monoidal categories by taking $\T=\pgpd\SMon$ to be the 2-theory presented by a suitable \TRS2, and $\W$ the subtheory generated by the generating $2$-cells corresponding to $\alpha$, $\lambda$ and $\rho$. One should also be able to obtain the coherence theorems for braided monoidal categories in a similar way.

A detailed study of those conjectures is left for future works, since it would require introducing some more categorical material, and our aim in this article is to focus on the rewriting techniques. Note that, apart from informal explanations, we could not find a proof of \cref{conj:strict-equiv,conj:strict-adj} for symmetric or braided monoidal categories in the literature, \eg in~\cite{joyal1993braided,maclane1998categories,maclane1963natural} (in~\cite[Theorem 2.5]{joyal1993braided} the result is only shown for free braided monoidal categories). Once proved, it would be interesting to investigate whether the converse implications hold, \ie whether the conditions imply $\W$ being 2-rigid.

\begin{remark}
  One could hope for the following alternative generalization of \cref{prop:equiv-alg}: \emph{a 2-functor~$F:\T\to\T'$ between theories is a biequivalence if and only if the functor $\Alg(F):\Alg(\T')\to\Alg(\T)$ induced by precomposition is an equivalence}. This is claimed in~\cite[Proposition~7]{yanofsky2001coherence}, along with the corollary that the categories $\Alg(\T/\W)$ and $\Alg(\T)$ are equivalent when $\W$ is 2-rigid (at least in the particular case where $\T$ is the theory of monoids and $\W=\T$, as in \cref{ex:trs-alg}). However, both proofs are based on the wrong claim that biessentially surjective local equivalences coincide with biequivalences~\cite[Proposition~6]{yanofsky2001coherence}.

  We recall that a 2-functor $F:\C\to\D$ between 2-categories is
  \begin{itemize}
  \item \emph{biessentially surjective} when for every $0$-cell $x$ of~$\C$ there is a $0$-cell~$y$ of $\D$ together with an equivalence $Fx\equivto y$,
  \item a \emph{local equivalence} when for every $0$-cells $x,y$ of~$\C$, the induced functor $\C(x,y)\to\C(Fx,Fy)$ is an equivalence,
  \item a \emph{biequivalence} when there is a $2$-functor $G:\D\to\C$ together with natural equivalences $G\circ F\equivto\Id_\C$ and $F\circ G\equivto\Id_\D$.
  \end{itemize}
  Any biequivalence is a biessentially surjective local equivalence, but the converse is not true: from a biessentially surjective local equivalence $F:\C\to\D$, one can in general only construct a pseudofunctor (as opposed to a 2-functor) $G:\D\to\C$ satisfying the desired properties. A concrete counter-example is given in~\cite[Example 3.1]{lack2002quillen}, as we now recall. Consider the 2-categories
  \begin{itemize}
  \item $\C$ with one 0-cell $\pt$, with $(\N,+,0)$ as monoid of endomorphisms on~$\pt$, and one 2-cell between 1-cells $m,n\in\N$ whenever $m$ and $n$ have the same parity,
  \item $\D$ with one 0-cell $\pt$, with $(\N/2\N,+,0)$ as monoid of endomorphisms on~$\pt$, and only trivial 2-cells.
  \end{itemize}
  In other terms, the 2-category~$\C$ is freely generated by one 0-cell $\pt$, one 1-cell $1$ and 2-cells are the congruence generated by $1+1=0$:
  \[
    \begin{tikzcd}[row sep=small]
      &\pt\ar[phantom,""{below,name=A}]\ar[dr,bend left,"1"]\\
      \pt\ar[ur,bend left,"1"]\ar[rr,bend right,"0"',""{name=B}]&{}&\pt
      \ar[from=A,to=B,Leftrightarrow,shorten=4pt]
    \end{tikzcd}
  \]
  and the category $\D$ is obtained from~$\C$ by formally turning 2-cells into identities. We thus have a quotient 2-functor $F:\C\to\D$ sending a 1-cell $n\in\N$ to $0$ or~$1$ depending on whether~$n$ is even or odd, \ie $F({2n})=0$ and $F({2n+1})=1$. Conversely, the functor $G:\D\to\C$ should associate to every 1-cell of~$\D$ a representative, \ie $G(0)={2m}$ and $G(1)={2n+1}$ for some $m,n\in\N$. Since we require that $G$ is functorial, we should have
  \[
    2m=G(0)=G(0+0)=G(0)+G(0)=4m
  \]
  so that $m=0$, and
  \[
    0=G(0)=G(1+1)=G(1)+G(1)=2n+2
  \]
  thus reaching a contradiction. The morale is that a choice of representatives is usually not strictly functorial, but only pseudo-functorial (above, $0$ and $2n+2$ are not equal, but they are in the same equivalence class).

  Whether the above generalization of \cref{prop:equiv-alg} holds is left open.
  However, we cannot use it to conclude that, when $\W\subseteq\T$ is 2-rigid, we have that the canonical functor $\Alg(\T/\W)\to\Alg(\T)$ is an equivalence, because the functor $\T\to\T/\W$ is in general a biessentially surjective local equivalence (\cref{prop:rigid-2equivalence}), but not a biequivalence.
  %
\end{remark}







\section{Coherent term rewriting systems}
\label{sec:trs}
\subsection{Extended rewriting systems}
We now recall the categorical setting for term rewriting systems, as well as their extension in order to handle coherence. A more detailed presentation can be found in~\cite{polygraphs,beke2011categorification,cohen2009coherence,malbos2016homological}.

\begin{definition}
  A \emph{signature} consists in a set~$\S_1$ of \emph{symbols} together with a function $s_0:\S_1\to\N$ associating to each symbol an \emph{arity}, and we write $a:n\to 1$ for a symbol~$a$ of arity~$n$. A morphism of signatures is a function between the corresponding sets of symbols which preserves arity, and we write $\nCPol1$ for the corresponding category.
\end{definition}

\begin{remark}
  If we were interested in the multi-sorted case, our signature would rather consist in a set~$\S_1$ together with a set $\S_0$ of \emph{sorts}, along with functions $s_0:\S_1\to\freecat\S_0$ (where $\freecat\S_0$ is the free monoid on $\S_0$) and $t_0:\S_1\to\S_0$, respectively indicating the sorts of the inputs and of the output. The above definition can be recovered as the particular case where $\S_0$ is the terminal set. This point of view explains why the index fo~$\S_1$ is~$1$.
\end{remark}

\noindent
There is a forgetful functor $\nLaw1\to\nCPol1$, sending a theory~$\T$ to the
set $\bigsqcup_{n\in\N}\T(n,1)$ with first projection as arity. This functor
admits a left adjoint $-^*:\nCPol1\to\nLaw1$, which we now describe.
Given a signature $\S_1$, and $n\in\N$, $\S_1^*(n,1)$ is the set of \emph{terms}
of arity~$n$: those are formed using operations, with variables
in~$\set{x_1^n,x_2^n,\ldots,x_n^n}$. Note that the superscript for variables is
necessary to unambiguously recover the type of a variable, \ie $x^n_i:n\to 1$,
but for simplicity we will often omit it in the following. More explicitly, the
family of sets $\S_1^*(n,1)$ is the smallest one such that
\begin{itemize}
\item for $1\leq i\leq n$, we have
  \[
    x^n_i\in\S_1^*(n,1)
  \]
\item given $m,n\in\N$, a symbol $a:n\to 1$ and terms
  $t_1,\ldots,t_n\in\S_1^*(m,1)$, we have
  \[
    a(t_1,\ldots,t_m)\in\S_1^*(m,1)
  \]
\end{itemize}
More generally, a morphism~$f$ in $\S_1^*(n,m)$ is an $m$-uple
\[
  f
  =
  \tuple{t_1,\ldots,t_m}
\]
of terms~$t_i$ with variables in $\set{x_1^n,\ldots,x_n^n}$, which can be
thought of as a formal \emph{substitution}. Given such a substitution~$f$ and a
term $t$, we write
\[
  t[f]
  \qquad\qquad\text{or}\qquad\qquad
  t[t_1/x_1,\ldots,t_n/x_n]
\]
for the term obtained from $t$ by formally replacing each variable $x_i^n$
by~$t_i$. This operation is thus defined inductively by
\begin{align*}
  x_i^n[f]&=t_i
  &
  a(u_1,\ldots,u_k)[f]&=a(u_1[f],\ldots,u_k[f])
\end{align*}
The composition of two morphisms $\tuple{t_1,\ldots,t_m}:\S_1^*(n,m)$ and
$\tuple{u_1,\ldots,u_k}:\S_1(m,k)$ is given by parallel substitution:
\[
  \tuple{u_1,\ldots,u_k}\circ\tuple{t_1,\ldots,t_m}=\tuple{u_1[t_1/x_1,\ldots,t_n/x_n],\ldots,u_k[t_1/x_1,\ldots,t_m/x_m]}
\]
and the identity in $\freecat\S_1(n,n)$ is $\tuple{x_1^n,\ldots,x_n^n}$.
The resulting category $\freecat\S_1$ is easily checked to be a Lawvere theory,
which satisfies the following universal property:

\begin{lemma}
  The Lawvere theory $\freecat\S_1$ is the free Lawvere theory on the
  signature~$\S_1$.
\end{lemma}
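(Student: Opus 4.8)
The plan is to verify the universal property: given any Lawvere theory $\T$ and any morphism of signatures $\phi:\S_1\to U\T$ (where $U$ is the forgetful functor sending a theory to its underlying signature), I must produce a unique Lawvere theory morphism $\tilde\phi:\freecat\S_1\to\T$ with $U\tilde\phi\circ\eta_{\S_1}=\phi$, where $\eta$ is the unit sending each symbol $a$ to itself viewed as a term. Concretely, $\phi$ assigns to each symbol $a:n\to 1$ a morphism $\phi(a):n\to 1$ in $\T$, and I need to extend this to all terms and tuples of terms.

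First I would define $\tilde\phi$ on objects as the identity on $\N$, which is forced since morphisms of Lawvere theories preserve products strictly and both theories have $\N$ as objects with addition as product. Then I would define $\tilde\phi$ on morphisms by induction on the structure of terms: set $\tilde\phi(x_i^n) = \pi_i:n\to 1$, the $i$-th projection in $\T$ (which exists and is unique by cartesianness), and $\tilde\phi(a(t_1,\ldots,t_k)) = \phi(a)\circ\tuple{\tilde\phi(t_1),\ldots,\tilde\phi(t_k)}$, using the universal property of products in $\T$ to form the tupling. For a general morphism $\tuple{t_1,\ldots,t_m}:n\to m$ in $\freecat\S_1$, set $\tilde\phi(\tuple{t_1,\ldots,t_m}) = \tuple{\tilde\phi(t_1),\ldots,\tilde\phi(t_m)}$, the induced map into the $m$-fold product $m = 1+\cdots+1$ in $\T$.

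Next I would check that $\tilde\phi$ is a well-defined product-preserving functor. Functoriality amounts to two things: that $\tilde\phi$ preserves identities (immediate, since the identity $\tuple{x_1^n,\ldots,x_n^n}$ is sent to $\tuple{\pi_1,\ldots,\pi_n} = \id_n$ by the universal property of the product), and that it preserves composition. The latter is the key computational point: I would prove by structural induction on a term $t$ the substitution compatibility lemma $\tilde\phi(t[f]) = \tilde\phi(t)\circ\tilde\phi(f)$ for any substitution $f$, the base case being the identity $\pi_i\circ\tuple{g_1,\ldots,g_n} = g_i$ and the inductive step unwinding the definition of $\tilde\phi$ on $a(\ldots)$ together with the interchange law for tupling and composition in a cartesian category; composition of general morphisms then follows since composition in $\freecat\S_1$ is componentwise parallel substitution. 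Product preservation is essentially by construction, since the projections and tuplings in $\freecat\S_1$ are sent to those in $\T$.

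Finally I would establish uniqueness: any product-preserving functor $\psi:\freecat\S_1\to\T$ with $\psi$ agreeing with $\phi$ on symbols must be the identity on objects (forced by strict product preservation), must send $x_i^n$ to $\pi_i$ (since $x_i^n$ is the $i$-th projection in $\freecat\S_1$ and $\psi$ preserves products), and must send $a(t_1,\ldots,t_k)$ to $\phi(a)\circ\tuple{\psi(t_1),\ldots,\psi(t_k)}$ by functoriality and product preservation; an easy induction on term structure then gives $\psi = \tilde\phi$. I expect the main obstacle to be bookkeeping rather than conceptual: carefully managing the superscripts on variables and the identification of $m\in\N$ with an iterated product $1+\cdots+1$, so that the tupling operations in $\T$ are applied to the right arities, and verifying the substitution lemma cleanly enough that composition-preservation drops out without a proliferation of cases.
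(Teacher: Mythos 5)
Your proposal is correct and complete. The paper actually states this lemma without proof (it is introduced with the remark that the construction is ``easily checked''), so there is no argument to compare against; what you give is the standard verification of the universal property — extension by structural induction on terms, the substitution-compatibility lemma $\tilde\phi(t[f])=\tilde\phi(t)\circ\tilde\phi(f)$ as the key step for functoriality, and uniqueness by induction using that the variables $x_i^n$ are the projections in $\freecat\S_1$ — which is exactly the argument the paper is implicitly relying on.
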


\noindent
By abuse of notation, we sometimes write
\[
  \freecat\S_1
  =
  \bigsqcup_{m,n\in\N}\S_1^*(m,n)
\]
for the set of all substitutions and
$\freecat s_0,\freecat t_0:\freecat\S_1\to\N$ for the source and target maps,
and $i_1:\S_1\to\freecat\S_1$ for the map sending an operation $a:n\to 1$ to the
substitution consisting of one term $\tuple{a(x_1^n,\ldots,x_n^n)}$, so that we
have $\freecat s_0\circ i_1=s_0$ and $\freecat t_0\circ i_1=1$.

\begin{definition}
  \label{trs}
  A \emph{term rewriting system}, or \TRS{}, $\S$ consists of a signature~$\S_1$ together with a set~$\S_2$ of \emph{rewriting rules} and functions $s_1,t_1:\S_2\to\freecat\S_1$ which indicate the source and target of each rewriting rule, and are supposed to satisfy
  \begin{align*}
    \freecat s_0\circ s_1&=\freecat s_0\circ t_1
    &
    \freecat t_0\circ s_1&=\freecat t_0\circ t_1=1    
  \end{align*}
  This data can be summarized in the following diagram:
  \[
    \begin{tikzcd}
      &\ar[dl,"a"']\ar[d,"i_1"']\S_1&\ar[dl,shift right,"s_1"']\ar[dl,shift left,"t_1"]\S_2\\
      \N&\ar[l,shift right,"\freecat s_0"']\ar[l,shift left,"\freecat t_0"]\freecat\S_1
    \end{tikzcd}
  \]
\end{definition}


\noindent
We sometimes write
\[
  \rho:t\To u
\]
for a rule $\rho$ with $t$ as source and $u$ as target. The relations satisfied by any \TRS{} ensure that both~$t$ and~$u$ have the same arity.

We now need to introduce some notions in order to be able to define rewriting in this setting. In case it helps, those are illustrated in \cref{rs-mon} below.
A \emph{context}~$C$ of arity~$n$ is a term with variables in
$\set{x_1,\ldots,x_n,\square}$ where the variable~$\hole$ is a particular
variable, the \emph{hole}, occurring exactly once. Here, we define the number
$\occurrences it$ of occurrences of a variable $x_i$ (and similarly for $\hole$)
in a term $t$ by induction by
\begin{align*}
  \occurrences i{x_i}&=1
  &
  \occurrences i{a(t_1,\ldots,t_n)}&=\sum_{k=1}^n\occurrences i{t_k}
\end{align*}
We write $\S^\hole_n$ for the set of contexts of arity~$n$. Given a context~$C$
and a term~$t$, both of same arity~$n$, we write $C[t]$ for the term obtained
from~$C$ by replacing $\hole$ by~$t$. The composition of contexts~$C$ and $D$ is
given by substitution
\[
  D\circ C=D[C]
\]
This composition is associative and admits the identity context $\hole$ as
neutral element.
A \emph{bicontext} from $n$ to $k$, is a pair $(C,f)$ consisting of a context~$C$ of arity $n$ and a substitution~$f\in\freecat\S_1(n,k)$. This data can be thought of as the specification of a function on terms
\begin{align*}
  \freecat\S_1(n,1)&\to\freecat\S_1(k,1)\\
  \tuple t&\mapsto C[\tuple t\circ f]
\end{align*}
In the following, for simplicity, we will omit the brackets and simply write~$t$ instead of~$\tuple t$ in such an expression, so that the image of the function can also be denoted $C[t\circ f]$. This function will be referred as the \emph{action} of a bicontext on terms.
The composition of bicontexts~$(C,f)$ and~$(D,g)$ of suitable types is given by $(D\circ C,f\circ g)$. The action is compatible with this composition, in the sense that we have
\[
  D[C[-\circ f]\circ g]
  =
  (D\circ C[f])[-\circ(f\circ g)]
\]

A \emph{rewriting step} of arity~$n$
\[
  C[\rho\circ f]
  :
  C[t\circ f]
  \To
  C[u\circ f]
\]
is a triple consisting of
\begin{itemize}
\item a rewriting rule $\rho:t\To u$, with $t$ and $u$ of arity~$k$,
\item a context $C$ of arity $n$,
\item a substitution $f:n\to k$ in $\freecat\S_1$.
\end{itemize}
A rewriting step can thus be thought of as a rewriting rule in a bicontext. Its source is the term $C[t\circ f]$ and its target is the term $C[u\circ f]$. We write $\steps\S_2$ for the set of rewriting steps.
A \emph{rewriting path}~$\pi$ is a composable sequence
\[
  \begin{tikzcd}[sep=large]
    C_1[t_1\circ f_1]
    \ar[r,Rightarrow,"{C_1[\rho_1\circ f_1]}"]
    &
    C_1[u_1\circ f_1]
    =
    C_2[t_2\circ f_2]
    \ar[r,Rightarrow,"{C_2[\rho_2\circ f_2]}"]
    &
    \cdots
    \ar[r,Rightarrow,"{C_n[\rho_n\circ f_n]}"]
    &
    C_n[u_n\circ f_n]
  \end{tikzcd}
\]
of rewriting steps. We write~$\freecat\S_2$ for the set of rewriting paths and
adopt the previous notation, \eg we write $\pi\pcomp\pi'$ for the concatenation
of two composable rewriting paths $\pi$ and $\pi'$.
As in \cref{sec:coh-ars}, we can also define a notion of \emph{rewriting zig-zag} which is similar to rewriting paths excepting that some rewriting steps may be taken backwards, and write $\freegpd\S_2$ for the corresponding set. We sometimes write $\pi:t\Pathto u$ (\resp $\pi:t\Zzto u$) to indicate the source and target or a rewriting path~$\pi$.

Given a signature $\S_1$, there is a forgetful functor from the category of Lawvere $2$-theories with $\freecat\S_1$ as underlying Lawvere theory to the category rewriting systems with~$\S_1$ as signature (with the expected notion of morphism).

\begin{lemma}
  Given a \TRS{} $\S$, the Lawvere $2$-theory $\freegpd\S$ with $\freecat\S_1$ as $1$-cells and $\freegpd\S_2$ as $2$-cells is free on~$\S$.
\end{lemma}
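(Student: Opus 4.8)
The plan is to verify the universal property of a free object directly, in the spirit of the analogous results for \ARS{} and \ARS{2} in \cref{sec:coh-ars} and of the freeness of $\freecat\S_1$ recalled above. Writing $U$ for the forgetful functor from Lawvere $2$-theories with $\freecat\S_1$ as underlying Lawvere theory to rewriting systems with signature $\S_1$, I would first fix the unit $\iota\colon\S\to U(\freegpd\S)$: it is the identity on $\S_1$ and sends a rule $\rho\colon t\To u$ to the length-one rewriting path $\hole[\rho\circ\id]$, viewed as a $2$-cell of $\freegpd\S$. This makes sense once one knows that $\freegpd\S$ is indeed a Lawvere $2$-theory: its underlying Lawvere theory is $\freecat\S_1$, which is free on $\S_1$ by the previous lemma; its hom-categories are groupoids since every rewriting zig-zag is invertible, the congruence imposed on zig-zags being precisely the one that turns them into a groupoid, as in the free groupoid on an \ARS{}; vertical composition of $2$-cells is concatenation of zig-zags, horizontal composition (whiskering) is the action of bicontexts on rewriting steps, extended to paths and zig-zags, and the interchange law follows from the compatibility of this action with composition of bicontexts recalled above; finally $\freegpd\S$ is cartesian with addition as product on objects, because rewriting steps and zig-zags between tuples of terms decompose componentwise just as terms do.

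Next, given a Lawvere $2$-theory $\T$ with underlying theory $\freecat\S_1$ and a morphism $F\colon\S\to U(\T)$ of rewriting systems --- that is, since the underlying theory is fixed, a map $F_2$ sending each rule $\rho\colon t\To u$ of $\S$ to a $2$-cell $t\To u$ of $\T$, compatibly with source and target --- I would define the extension $\tilde F\colon\freegpd\S\to\T$ to be the identity on $0$- and $1$-cells and to send a rewriting step $C[\rho\circ f]$ to the $2$-cell of $\T$ obtained by whiskering $F_2(\rho)$ with the $1$-cells $C$ and $f$ of $\freecat\S_1$ (which, since $\tilde F$ is the identity on $1$-cells, already live in $\T$), extending this to rewriting paths by vertical composition and to rewriting zig-zags by additionally using inverses in the groupoid $\T$. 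Since rewriting steps are by definition triples, the only identifications to respect are those of the zig-zag congruence, and these hold because a $2$-cell of $\T$ composed with its inverse is an identity; so $\tilde F$ is well defined, and $U(\tilde F)\circ\iota=F$ by construction.

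It then remains to check that $\tilde F$ is a product-preserving $2$-functor, and that it is the unique such extension. It preserves identity $2$-cells (empty paths) and vertical composition by construction; it preserves horizontal composition because whiskering a rewriting step by a context or a substitution corresponds, under $\tilde F$, to the analogous whiskering in $\T$, which is the transport through $\tilde F$ of the compatibility of the bicontext action with composition; and it preserves products because it is the identity on $0$- and $1$-cells and both theories compute the product of $2$-cells componentwise. For uniqueness, any morphism $G\colon\freegpd\S\to\T$ of $2$-theories over $\freecat\S_1$ with $U(G)\circ\iota=F$ is the identity on $0$- and $1$-cells and satisfies $G(\iota(\rho))=F_2(\rho)$ for every rule $\rho$; since every $2$-cell of $\freegpd\S$ is a vertical composite of rewriting steps and their formal inverses, and every rewriting step $C[\rho\circ f]$ is obtained from $\iota(\rho)$ by whiskering with $1$-cells of $\freecat\S_1$, the assumptions that $G$ is a product-preserving $2$-functor fixing the $1$-cells and sending $\iota(\rho)$ to $F_2(\rho)$ force $G=\tilde F$.

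I expect the main obstacle to be the bookkeeping involved in checking that $\tilde F$ respects the full $2$-categorical and cartesian structure --- in particular the interchange law and the behaviour of whiskering by bicontexts --- which is conceptually routine but notationally heavy. A cleaner alternative, in the spirit of the proof that $\freecat{(-)}\colon\nPol1\to\Cat$ is a left adjoint, is to observe that both the category of Lawvere $2$-theories over $\freecat\S_1$ and the category of rewriting systems over $\S_1$ are categories of models of projective sketches, that $U$ is induced by a morphism of sketches, and hence that $U$ admits a left adjoint~\cite{barr2000toposes,polygraphs}; it then only remains to identify its value at $\S$ with $\freegpd\S$ by computing the latter's $1$- and $2$-cells, which avoids the explicit structural verification.
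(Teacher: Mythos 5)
The paper states this lemma without proof, so there is nothing to compare against line by line; your direct verification of the universal property is the natural route, and your sketch-theoretic fallback is exactly the argument the paper itself gives for the one-dimensional analogue (the left adjoint $\freecat-:\nPol1\to\Cat$). The unit, the definition of $\tilde F$ on a rewriting step $C[\rho\circ f]$ by whiskering $F_2(\rho)$, the extension to zig-zags via inverses in~$\T$, the product-preservation argument, and the uniqueness argument are all in order.

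There is, however, one genuine gap: the interchange law. You claim it ``follows from the compatibility of the bicontext action with composition of bicontexts''. It does not: that compatibility, $D[C[-\circ f]\circ g]=(D\circ C[f])[-\circ(f\circ g)]$, concerns iterated whiskering of a \emph{single} cell by nested bicontexts, whereas interchange concerns \emph{two} rewriting steps performed at disjoint positions of a binary context, say $C[\alpha_1\circ f_1,t_2\circ f_2]\pcomp C[u_1\circ f_1,\alpha_2\circ f_2]$ versus $C[t_1\circ f_1,\alpha_2\circ f_2]\pcomp C[\alpha_1\circ f_1,u_2\circ f_2]$. As literal sequences of rewriting steps these are distinct, and the only congruence imposed on $\freegpd\S_2$ cancels a step against its formal inverse; so horizontal composition of two non-identity $2$-cells is not well defined (its two standard decompositions into whiskerings disagree) and $\freegpd\S$ as you describe it is not yet a $2$-category. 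The same issue resurfaces in the cartesian structure: since a variable may occur several times in a term, even the single whiskering $g\circ\alpha$ must apply $\alpha$ once per occurrence, and the order of those steps is canonical only up to interchange. The standard repair, as in the construction of the free $2$-category on a computad/polygraph, is to quotient zig-zags by the interchange relation in addition to the inverse relation; your $\tilde F$ still descends to this quotient because $\T$ satisfies interchange, and the rest of your argument goes through unchanged. This imprecision is arguably latent in the paper's own definition of $\freegpd\S_2$, but your proof should make the extra quotient explicit rather than assert that interchange comes for free.
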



The action of bicontexts on terms extend to rewriting steps as follows. Given a
rewriting step
\[
  C[\rho\circ f]
  :
  C[t\circ f]
  \To
  C[u\circ f]
\]
a context~$D$ and a substitution~$g$ of suitable types, we define
$D[C[\rho\circ f]\circ g]$ to be the rewriting step
\[
  (D\circ C[g])[\rho\circ(f\circ g)]
  :
  (D\circ C[g])[t\circ(f\circ g)]
  \To
  (D\circ C[g])[u\circ(f\circ g)]
\]
Moreover, we extend this action to rewriting paths and zig-zags by
functoriality, \ie
\[
  C[(p\pcomp q)\circ f]=C[p\circ f]\pcomp C[q\circ f]
\]

\begin{definition}
  \label{2trs}
  An \emph{extended term rewriting system}, or \TRS{2}, consists of a term
  rewriting system as above, together with a set $\S_3$ of \emph{coherence
    relations} and functions $s_2,t_2:\S_3\to\freegpd\S_2$, indicating their
  source and target, satisfying
  \begin{align*}
    \freegpd s_1\circ s_2&=\freegpd s_1\circ t_2
    &
    \freegpd t_1\circ s_2&=\freegpd t_1\circ t_2    
  \end{align*}
  Diagrammatically,
  \[
    \begin{tikzcd}
      &\ar[dl,"a"']\ar[d,"i_1"']\S_1&\ar[dl,shift right,"s_1"']\ar[dl,shift left,"t_1"]\S_2\ar[d,"i_2"']&\ar[dl,shift right,"s_2"']\ar[dl,shift left,"t_2"]\S_3\\
      \N&\ar[l,shift right,"\freecat s_0"']\ar[l,shift left,"\freecat t_0"]\freecat\S_1&\ar[l,shift right,"\freegpd s_1"']\ar[l,shift left,"\freegpd t_1"]\freegpd\S_2
    \end{tikzcd}
  \]
\end{definition}

\noindent
Given a \TRS2 as above, we sometimes write
\[
  A:\pi\TO\pi'
\]
to indicate that $A$ is a coherence relation in~$\S_3$ with $\pi$ as source and
$\pi'$ as target.
Given two rewriting paths~$\pi$ and~$\pi'$, we write $\pi\Cohto\pi'$ when they
are related by the smallest congruence identifying the source and target of any
coherence relation.

\begin{definition}
  The Lawvere $2$-theory \emph{presented} by a \TRS{2}~$\S$ is the $(2,1)$-category noted~$\pgpd\S$, with $\N$ as $0$-cells, $\freecat\S_1$ as $1$-cells and the quotient of $\freegpd\S_2$ under the congruence $\Cohto$ as $2$-cells.
\end{definition}

\begin{example}
  \label{ex:rs-mon}
  \label{rs-mon}
  The extended rewriting system $\Mon$ for monoids has symbols and rules
  \begin{align*}
    \Mon_1&=\set{m:2\to 1,e:0\to 1}\\
    \Mon_2&=\set{
      \begin{array}{r@{\ :\ }r@{\ \To\ }l}
        \alpha&m(m(x_1,x_2),x_3)&m(x_1,m(x_2,x_3))\\
        \lambda&m(e,x_1)&x_1\\
        \rho&m(x_1,e)&x_1            
      \end{array}
    }
  \end{align*}
  There are coherence relations~$A$, $B$, $C$, $D$ and $E$, respectively corresponding to a confluence for the five critical pairs of the rewriting system (as defined below), whose $0$-sources are respectively
  \begin{align*}
    m(m(m(x_1,x_2),x_3),x_4)
    &&
    m(m(e,x_1),x_2)
    &&
    m(m(x_1,e),x_2)
    &&
    m(m(x_1,x_2),e)
    &&
    m(e,e)
  \end{align*}
  Those coherence relations can be pictured as follows:
  \[
    \begin{tikzcd}[arrows={Rightarrow}]
      m(m(m(x_1,x_2),x_3),x_4)\ar[dd,"\alpha"']\ar[r,"\alpha"]&m(m(x_1,m(x_2,x_3)),x_4)\ar[dr,"\alpha"]\\
      \ar[rr,scaling nfold=3,"A",shorten=100pt]&&m(x_1,m(m(x_2,x_3),x_4))\ar[d,"\alpha"]\\
      m(m(x_1,x_2),m(x_3,x_4))\ar[rr,"\alpha"']&&m(x_1,m(x_2,m(x_3,x_4)))
    \end{tikzcd}
  \]
  \[
    \begin{array}{cc}
      \begin{tikzcd}[arrows={Rightarrow},column sep=-10pt]
        m(m(e,x_1),x_2)\ar[dr,"\lambda"']\ar[rr,"\alpha"]&\ar[d,phantom,"\overset B\TO",pos=.3]&\ar[dl,"\lambda"]m(e,m(x_1,x_2))\\
        &m(x_1,x_2)
      \end{tikzcd}
      &
      \begin{tikzcd}[arrows={Rightarrow},column sep=-10pt]
        m(m(x_1,e),x_2)\ar[dr,"\rho"']\ar[rr,"\alpha"]&\ar[d,phantom,"\overset C\TO",pos=.3]&\ar[dl,"\lambda"]m(x_1,m(e,x_2))\\
        &m(x_1,x_2)&
      \end{tikzcd}
      \\
      \begin{tikzcd}[arrows={Rightarrow},column sep=-10pt]
        m(m(x_1,x_2),e)\ar[dr,"\rho"']\ar[rr,"\alpha"]&\ar[d,phantom,"\overset D\TO",pos=.3]&m(x_1,m(x_2,e))\ar[dl,"\rho"]\\
        &m(x_1,x_2)
      \end{tikzcd}
      &
      \begin{tikzcd}[arrows={Rightarrow}]
        \ar[d,bend right,"\lambda"',""{name=l}]m(e,e)\ar[d,bend left,"\rho",""{name=r}]\ar[d,phantom,"\overset E\TO",pos=.3]\\
        e
      \end{tikzcd}
    \end{array}
  \]
  For concision, for each arrow, we did not indicate the proper rewriting step, but only the rewriting rule of the rewriting step (hopefully, the reader will easily be able to reconstruct the missing bicontext). For instance, the coherence relation~$C$ has type
  \[
    C
    :
    m(\rho(x_1),x_2)
    \To
    \alpha(x_1,e,x_2)\pcomp m(x_1,\lambda(x_2))
  \]
  so that the missing bicontexts for the rules labeled by $\rho$, $\alpha$ and $\lambda$ are respectively
  \begin{align*}
    m(\hole,x_2)[-\circ\tuple{x_1}]
    &&
    \hole[-\circ\tuple{x_1,e,x_2}]
    &&
    m(x_1,\hole)[-\circ\tuple{x_2}]
  \end{align*}
  This coherent term rewriting system has been considered in various places in literature~\cite{beke2011categorification,cohen2009coherence}.
\end{example}

\noindent
We mention here that the notion of Tietze transformation can be defined for \TRS2 in a similar way as for \ARS2 (\cref{ars2-tietze}):

\begin{definition}
  \label{trs2-tietze}
  The \emph{Tietze transformations} are the following possible transformations on a \TRS2~$\P$:
  \begin{enumerate}[(T2)]
  \item[(T1)] given a zig-zag $\pi:t\Zzto u$, add a new rewriting rule $\alpha:t\To u$ in $\P_2$ together with a new coherence relation $A:\alpha\TO\pi$ in~$\P_3$,
  \item[(T2)] given zig-zags $\pi,\rho:t\Zzto u$ such that $\pi\Cohto\rho$, add a new coherence relation $A:\pi\TO\rho$ in~$\P_3$.
  \end{enumerate}
  The \emph{Tietze equivalence} is the smallest equivalence relation on \TRS2 identifying $\P$ and $\Q$ whenever $\Q$ can be obtained from~$\P$ by a Tietze transformation (T1) or (T2).
\end{definition}

\begin{proposition}
  \label{trs2-tietze-correct}
  Any two Tietze equivalent \TRS2 present isomorphic groupoids.
\end{proposition}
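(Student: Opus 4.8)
The plan is to reduce the statement to the case of a single Tietze transformation and then verify directly that each of (T1) and (T2) preserves the presented Lawvere $2$-theory up to isomorphism. By \cref{trs2-tietze}, Tietze equivalence is the equivalence relation generated by the relation ``$\Q$ is obtained from~$\P$ by one application of (T1) or (T2)''; since ``presenting isomorphic theories'' is itself an equivalence relation on \TRS2, it suffices to show that $\pgpd\P$ and $\pgpd\Q$ are isomorphic whenever $\Q$ arises from~$\P$ by a single transformation, the general case then following by an immediate induction on the length of a chain of transformations.

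The case (T2) is immediate. Here $\Q$ has the same underlying \TRS{} as~$\P$, hence the same $0$- and $1$-cells, and $\Q_3=\P_3\sqcup\{A\}$ for a coherence relation $A:\pi\TO\rho$ with $\pi\Cohto\rho$ already holding in~$\P$. Adjoining the pair $(\pi,\rho)$ does not change the congruence $\Cohto$ generated on the zig-zags of~$\P$, so the identity on $0$-, $1$- and $2$-cells is a well-defined isomorphism $\pgpd\P\isoto\pgpd\Q$.

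For (T1), write $\Q_2=\P_2\sqcup\{\alpha\}$ with $\alpha:t\To u$ and $\Q_3=\P_3\sqcup\{A\}$ with $A:\alpha\TO\pi$, where $\pi:t\Zzto u$ is a fixed zig-zag in $\freegpd\P_2$. The evident inclusion of \TRS2 induces a morphism $\iota:\pgpd\P\to\pgpd\Q$ which is the identity on $0$- and $1$-cells and sends each rewriting rule of~$\P_2$ to its class in $\pgpd\Q$. To build an inverse I would appeal to the universal property of $\pgpd\Q$ as the theory presented by~$\Q$, together with the universal property of the free Lawvere $2$-theory on a \TRS{}: a morphism out of $\pgpd\Q$ is determined by its values on $0$-cells, on the operations of~$\P_1$, and on the rewriting rules of~$\Q_2$, subject only to identifying the two sides of every coherence relation of~$\Q_3$. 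Define $\psi:\pgpd\Q\to\pgpd\P$ by $n\mapsto n$, the identity on operations, each rule of~$\P_2$ to its class in $\pgpd\P$, and $\alpha\mapsto\pgpd\pi$ (the class of~$\pi$ in $\pgpd\P$); this respects~$\P_3$ trivially and respects~$A$ because the image of~$\alpha$ is~$\pgpd\pi$, which is exactly the image of~$\pi$. It then remains to check $\psi\circ\iota=\Id_{\pgpd\P}$ and $\iota\circ\psi=\Id_{\pgpd\Q}$, which by the same universal property need only be verified on generators: both composites fix every operation and every rule of~$\P_2$, and $(\iota\circ\psi)(\alpha)=\iota(\pgpd\pi)$ is the class of~$\pi$ in $\pgpd\Q$, which equals the class of~$\alpha$ precisely by the coherence relation $A\in\Q_3$. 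Hence $\iota$ is an isomorphism, and $\pgpd\P$ and $\pgpd\Q$ are isomorphic.

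The step I expect to require the most care is this appeal, in the (T1) case, to the universal property of the presented theory: one must be precise that a morphism out of the free Lawvere $2$-theory on a \TRS{} --- and hence out of any of its quotients --- is freely determined by its values on the signature and on the rewriting rules, with contexts, substitutions and all $(2,1)$-categorical composites carried along automatically, so that ``defining $\psi$ on generators'' and ``comparing two parallel morphisms on generators'' are both legitimate moves. This is just the universal property of the free Lawvere $2$-theory transported through the quotient defining the presented theory, and it is the only point where the genuine structure of Lawvere $2$-theories, rather than the bare groupoids of \cref{tietze-correct}, intervenes; otherwise the argument is formally identical to the \ARS2 case.
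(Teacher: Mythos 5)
Your proof is correct. The paper gives no proof of this proposition at all (it is asserted by analogy with \cref{tietze-correct}, which is itself dismissed as ``easy to see''), and your argument --- reduction to a single transformation, the observation that (T2) does not change the generated congruence, and the pair of mutually inverse morphisms for (T1) defined on generators via the universal property of the presented theory, with the relation $A$ supplying the identity $\iota\circ\psi=\Id$ on the new rule $\alpha$ --- is precisely the standard argument the paper leaves implicit.
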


\subsection{Rewriting properties}
\label{sec:trs-rewr}
Let~$\S$ be a \TRS{2} together with $W\subseteq\S_2$.
The \TRS{2}~$\S$ induces an \ARS{2} in each hom-set: this point of view will allow reusing the work done on \ARS{2} on \cref{sec:ars}.

\begin{definition}
  \label{hom-ars}
  Given a \TRS{2}~$\S$ and $n\in\N$, we write $\S(n,1)$ for the \ARS2 whose
  \begin{itemize}
  \item objects are the $n$-ary terms:
    \[
      \S(n,1)_0=\freecat\S_1(n,1)
    \]
  \item morphisms are the $n$-ary rewriting steps:
    \[
      \S(n,1)_1=\steps\S_2(n,1)
    \]
    where $\steps\S_2(n,1)$ is the set of rewriting steps
    \[
      C[\rho\circ f]:C[t\circ f]\To C[u\circ f]
    \]
    with both $C[t\circ f]$ and $C[u\circ f]$ of arity~$n$,
  \item coherence relations are triples $(C,A,f)$, written $C[A\circ f]$, for
    some context~$C$, coherence relation $A\in\S_3$ and substitution $f$, of
    suitable type, of the form
    \[
      C[A\circ f]
      :
      C[\pi\circ f]\TO C[\pi'\circ f]
      :
      C[t\circ f]\To C[u\circ f]
    \]
    such that both $C[t\circ f]$ and $C[u\circ f]$ of arity~$n$.
  \end{itemize}
\end{definition}

\noindent
Similarly, a set $W$ induces a set $W(m,1)\subseteq\S(m,1)_1$, where
$W(m,1)$ is the set of \emph{$W$-rewriting steps}, \ie rewriting steps of the
form $C[\alpha\circ f]$ with $\alpha\in W$.
We say that a \TRS{2}~$\S$ is \emph{$W$-terminating} / \emph{locally $W$-confluent} / \emph{$W$-confluent} / \emph{$W$-coherent} when each $\S(m,n)$ is with respect to $W(m,n)$. We say that~$\S$ is \emph{confluent} when it is $W$-confluent for~$W=\S_2$ (and similarly for other properties). More explicitly,

\begin{definition}
  A \emph{$W$-branching} $(\alpha_1,\alpha_2)$ is a pair of rewriting steps
  $\alpha_1:t\To u_1$ and $\alpha_2:t\To u_2$ in~$\steps\W$ with the same
  source:
  \[
    \begin{tikzcd}
      u_1&\ar[l,Rightarrow,"\alpha_1"']t\ar[r,Rightarrow,"\alpha_2"]&u_2
    \end{tikzcd}
  \]
  Such a $W$-branching is \emph{$W$-confluent} when there are cofinal rewriting
  paths $\pi_1:u_1\To v$ and $\pi_2:u_2\To v$ in $\freecat W$ such that
  $\pgpd{\alpha_1\pcomp\pi_1}=\pgpd{\alpha_2\pcomp\pi_2}$, which is depicted on
  the left
  \[
    \begin{tikzcd}[sep=small]
      &\ar[dl,Rightarrow,"\alpha_1"']t\ar[dr,Rightarrow,"\alpha_2"]&\\
      u_1\ar[dr,Rightarrow,dotted,"\pi_1"']&&\ar[dl,Rightarrow,dotted,"\pi_2"]u_2\\
      &v
    \end{tikzcd}
    \qquad\qquad\qquad
    \begin{tikzcd}[sep=small]
      &\ar[dl,Rightarrow,"{C[\alpha_1\circ f]}"']C[t\circ f]\ar[dr,Rightarrow,"{C[\alpha_2\circ f]}"]&\\
      C[u_1\circ f]\ar[dr,Rightarrow,dotted,"{C[\pi_1\circ f]}"']&&\ar[dl,Rightarrow,dotted,"{C[\pi_2\circ f]}"]C[u_2\circ f]\\
      &C[v\circ f]
    \end{tikzcd}
  \]
\end{definition}

\noindent
By extension of \cref{prop:lc+n-c}, we have

\begin{proposition}
  \label{prop:lc+n-c2}
  If $\S$ is $W$-terminating and locally $W$-confluent then it is $W$-confluent.
\end{proposition}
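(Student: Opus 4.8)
The plan is to reduce the statement to the abstract version already proved, \cref{prop:lc+n-c} (Newman's lemma for \ARS2), applied separately in each hom-\ARS2. Recall from \cref{hom-ars} that a \TRS2~$\S$ induces, for every $m\in\N$, an \ARS2 $\S(m,1)$, and that $W\subseteq\S_2$ induces a set $W(m,1)\subseteq\S(m,1)_1$; by definition $\S$ is $W$-terminating (\resp locally $W$-confluent, \resp $W$-confluent) precisely when every $\S(m,1)$ is $W(m,1)$-terminating (\resp locally $W(m,1)$-confluent, \resp $W(m,1)$-confluent) — the case of a general codomain~$n$ being subsumed since, by cartesianness, a rewriting step in an $n$-tuple of terms is a rewriting step in one of its components. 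It therefore suffices to show that each \ARS2 $\S(m,1)$ is $W(m,1)$-confluent, and this is exactly \cref{prop:lc+n-c} applied to $\S(m,1)$ together with $W(m,1)$.

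The one point deserving care is to make sure that confluence of the hom-\ARS2 $\S(m,1)$, as defined in \cref{sec:ars}, really expresses $W$-confluence of~$\S$ as defined here. The two notions differ only in which congruence is used to fill confluence squares: \cref{sec:ars} asks that a branching $\alpha_1,\alpha_2$ be closed by paths $\pi_1,\pi_2$ with $\alpha_1\pcomp\pi_1\cohto\alpha_2\pcomp\pi_2$ for the congruence $\cohto$ generated by the coherence relations of $\S(m,1)$, whereas here we ask for $\pgpd{\alpha_1\pcomp\pi_1}=\pgpd{\alpha_2\pcomp\pi_2}$, \ie for $\alpha_1\pcomp\pi_1\Cohto\alpha_2\pcomp\pi_2$. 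I would check that these coincide on $m$-ary rewriting paths: if $\pi,\pi'\colon t\Pathto u$ are parallel $m$-ary paths, then $\pi\cohto\pi'$ in $\S(m,1)$ iff $\pi\Cohto\pi'$. The direction from left to right is immediate because every coherence relation of $\S(m,1)$ is of the form $C[A\circ f]$ with $A\in\S_3$, hence relates $\Cohto$-equivalent paths. Conversely, a witnessing sequence of applications of coherence relations of $\S_3$ in context connecting $\pi$ to $\pi'$ necessarily keeps the source~$t$ and target~$u$ fixed, so it stays within $m$-ary paths, and each elementary step is the application of some $C[A\circ f]\in\S(m,1)_2$ in a context; thus $\pi\cohto\pi'$ in $\S(m,1)$.

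With this identification, the hypotheses translate exactly into $W(m,1)$-termination and local $W(m,1)$-confluence of each $\S(m,1)$, \cref{prop:lc+n-c} yields $W(m,1)$-confluence of each $\S(m,1)$, and hence $W$-confluence of~$\S$. The proof is thus essentially a matter of unfolding the definitions of \cref{hom-ars} plus one invocation of the abstract Newman lemma; the only — and rather mild — obstacle is the bookkeeping of the previous paragraph, ensuring that passing to hom-\ARS2s neither creates nor destroys the coherence fillings required by confluence.
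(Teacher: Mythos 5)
Your proposal is correct and is essentially the paper's intended argument: the paper defines the $W$-termination/confluence properties of a \TRS2 hom-wise via the \ARS2s $\S(m,n)$ and then simply invokes the abstract Newman lemma (\cref{prop:lc+n-c}), exactly as you do. Your extra check that the congruence $\cohto$ generated by the contextualized coherence relations $C[A\circ f]$ of $\S(m,1)$ agrees with $\Cohto$ on parallel $m$-ary paths is a sound piece of bookkeeping that the paper leaves implicit.
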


\noindent
In practice, termination can be shown as
follows~\cite[Section 5.2]{baader1999term}.

\begin{definition}
  A \emph{reduction order}~$\geq$ is a well-founded preorder on terms in
  $\freecat\S_1$ which is compatible with context extension: given terms
  $t,u\in\freecat\S_1$, $t>u$ implies $C[t\circ f]>C[u\circ f]$ for every
  context $C$ and substitution~$f\in\freecat\S_1$ (whose types are such that the
  expressions make sense).
\end{definition}

\begin{proposition}
  \label{prop:ro-term}
  A \TRS{2}~$\S$ equipped with a reduction order such that $t>u$ for any rule
  $\alpha:t\To u$ in~$W$ is $W$-terminating.
\end{proposition}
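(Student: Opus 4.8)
The plan is to show that the reduction order strictly decreases along every $W$-rewriting step, so that an infinite $W$-rewriting path would produce an infinite strictly descending chain for~$\geq$, contradicting well-foundedness.

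First, I would reduce to the abstract rewriting systems $\S(m,1)$ for $m\in\N$: by cartesianness a rewriting step in $\S(m,n)$ modifies a single term among an $n$-tuple, so an infinite rewriting path in $\S(m,n)$ restricts to an infinite rewriting path in some $\S(m,1)$, and it thus suffices to prove that each $\S(m,1)$ is $W(m,1)$-terminating. Now take an arbitrary $W$-rewriting step of arity~$m$: by \cref{hom-ars} it has the form $C[\alpha\circ f]:C[t\circ f]\To C[u\circ f]$ for some rule $\alpha:t\To u$ in~$W$, a context~$C$ and a substitution~$f$. By the hypothesis on the reduction order we have $t>u$, and since $\geq$ is compatible with context extension this yields $C[t\circ f]>C[u\circ f]$. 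Hence the source of every $W$-rewriting step is strictly greater, for~$\geq$, than its target.

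Finally, suppose for contradiction that $\S$ is not $W$-terminating. Then in some $\S(m,1)$ there is an infinite sequence $\alpha_1,\alpha_2,\ldots$ of $W$-rewriting steps such that every finite prefix is a rewriting path in $\freecat{W(m,1)}$; writing $t_0$ for the source of $\alpha_1$ and, for $i\geq 1$, $t_i$ for the target of $\alpha_i$ (equivalently the source of $\alpha_{i+1}$), the previous paragraph gives $t_0>t_1>t_2>\cdots$. This is an infinite strictly descending chain for~$\geq$, which is impossible since the reduction order is well-founded. Therefore no such infinite sequence exists and $\S$ is $W$-terminating.

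The argument is entirely routine once the definition of reduction order is unwound, and I do not expect any genuine obstacle; the only points worth a word of care are the reduction from $\S(m,n)$ to $\S(m,1)$ and the observation that a \emph{well-founded} preorder is exactly what forbids the infinite strict chain obtained at the end.
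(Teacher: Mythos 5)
Your proof is correct and follows the same route as the paper's (much terser) argument: every $W$-rewriting step $C[\alpha\circ f]:C[t\circ f]\To C[u\circ f]$ strictly decreases the reduction order by compatibility with context extension, and well-foundedness then rules out an infinite $W$-rewriting path. The extra care you take in reducing from $\S(m,n)$ to $\S(m,1)$ is harmless and slightly more explicit than the paper, but not a different method.
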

\begin{proof}
  For any rewriting step $ C[\rho\circ f] : C[t\circ f] \To C[u\circ f] $ we
  have $C[t\circ f]>C[u\circ f]$ and we conclude by well-foundedness.
\end{proof}

\noindent
Moreover, in order to construct a reduction order one can use the following
``interpretation method''~\cite[Section~5.3]{baader1999term}.

\begin{proposition}
  Let $(X,\leq)$ be a well-founded poset together with an interpretation
  \[
    \intp{a}:X^n\to X
  \]
  of each symbol $a\in\S_1$ of arity~$n$ as a function which is strictly decreasing in each argument. This induces an interpretation $\intp{t}:X^n\to X$ of every term $t$ of arbitrary arity~$n$ defined by induction by
  \begin{align*}
    \intp{x^n_i}&=\pi_i^n
    &
    \intp{a(t_1,\ldots,t_n)}&=\intp{a}\circ\tuple{\intp{t_1},\ldots,\intp{t_n}}
  \end{align*}
  where $\pi^n_i:X^n\to X$ is the projection on the $i$-th coordinate.
  We define an order on functions $f,g:X^n\to X$ by
  \[
    f\succ g
    \qquad\text{iff}\qquad
    f(x_1,\ldots,x_n)\succ g(x_1,\ldots,x_n)
    \text{ for every $x_1,\ldots,x_n\in X$}
  \]
  and we still write $\succeq$ for the order on terms such that $t\succeq u$
  whenever $\intp{t}\succeq\intp{u}$.
  This order is always a reduction order.
\end{proposition}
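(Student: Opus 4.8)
The plan is to verify the three requirements in the definition of a reduction order: that $\succeq$ is a preorder, that the strict relation $\succ$ is well-founded, and that $\succ$ is compatible with context extension. The first is immediate: reflexivity and transitivity of $\succeq$ on functions $X^n\to X$ are inherited pointwise from the poset $(X,\le)$, and $\succ$ is then transitive, irreflexive and contained in $\succeq$ (and composes with $\succeq$ on either side to land in $\succ$), so it plays the role of the strict relation associated with the preorder $\succeq$; pulling these back along $t\mapsto\intp{t}$ gives, on terms of any fixed arity, a preorder with associated strict relation $\succ$. For well-foundedness, an infinite chain $t_0\succ t_1\succ\cdots$ of $n$-ary terms would, on evaluation at any $\vec x\in X^n$, yield an infinite strictly descending chain $\intp{t_0}(\vec x)>\intp{t_1}(\vec x)>\cdots$ in $X$, contradicting well-foundedness of $\le$; here one uses that $X$ is nonempty, which is in any case forced whenever the signature has a constant.

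The substance of the proof is compatibility with contexts, which I would deduce from two lemmas, each by a straightforward induction on terms. The first is a \emph{substitution lemma}: for a $k$-ary term $t$ and a substitution $f=\tuple{s_1,\dots,s_k}:n\to k$, one has $\intp{t\circ f}(\vec x)=\intp{t}(\intp{s_1}(\vec x),\dots,\intp{s_k}(\vec x))$ for all $\vec x\in X^n$; the base case uses $\intp{x^k_i}=\pi^k_i$ and the inductive case the defining formula for $\intp{a(t_1,\dots,t_p)}$. Regarding a context $C$ of arity $n$ as a term $\bar C$ of arity $n+1$, obtained by renaming its unique hole $\hole$ to a fresh variable $x_{n+1}$, this specializes to $\intp{C[v]}(\vec x)=\intp{\bar C}(\vec x,\intp{v}(\vec x))$ for every $n$-ary term $v$. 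The second lemma, which is the heart of the matter, is a \emph{monotonicity lemma}: for an $m$-ary term $v$ and tuples $\vec y\ge\vec y'$ componentwise in $X^m$, one has $\intp{v}(\vec y)\ge\intp{v}(\vec y')$, with strict inequality whenever $y_i>y_i'$ for some index $i$ at which the variable $x_i$ actually occurs in $v$. It is proved by induction on $v$, the inductive step invoking that each $\intp{a}$ is strictly monotone (in the order-preserving sense) in every argument: monotonicity in all arguments gives the weak inequality, while for the strict one we raise strictly the argument coming from the immediate subterm in which $x_i$ occurs --- strictly, by the induction hypothesis --- and only weakly the others, then chain the resulting strict and weak comparisons through $\intp{a}$.

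It then remains to assemble the pieces. Given $t\succ u$ of arity $k$, a context $C$ of arity $n$, and $f=\tuple{s_1,\dots,s_k}:n\to k$, the substitution lemma gives, for each $\vec x\in X^n$, that $\intp{C[t\circ f]}(\vec x)=\intp{\bar C}(\vec x,\intp{t}(\vec z))$ and $\intp{C[u\circ f]}(\vec x)=\intp{\bar C}(\vec x,\intp{u}(\vec z))$, where $\vec z=(\intp{s_1}(\vec x),\dots,\intp{s_k}(\vec x))\in X^k$; since $\intp{t}(\vec z)>\intp{u}(\vec z)$ by the hypothesis $t\succ u$, and since the last variable $x_{n+1}$ of $\bar C$ does occur (in place of $\hole$) while the first $n$ arguments coincide, the monotonicity lemma gives $\intp{C[t\circ f]}(\vec x)>\intp{C[u\circ f]}(\vec x)$; as $\vec x$ was arbitrary, $C[t\circ f]\succ C[u\circ f]$. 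The only step needing genuine care is the strict clause of the monotonicity lemma --- the bookkeeping of ``one argument up strictly, the others up weakly'' and the observation that a single occurrence of $x_i$ (resp.\ of $\hole$ in $C$) already suffices for strictness; everything else is routine.
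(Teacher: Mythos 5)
Your proof is correct. The paper does not actually prove this proposition --- it is stated without proof and delegated to the standard reference (Baader--Nipkow, Section 5.3) --- and your argument (substitution lemma, then a monotonicity lemma with a weak clause and a strict clause tracking the occurrence of the distinguished variable, assembled via the view of a context as a term with one extra variable) is precisely the textbook one. Two details you handle correctly and that are worth the care you give them: the paper's phrase ``strictly decreasing in each argument'' must be read as ``strictly order-preserving'' (as your parenthetical does, and as the example $\intp{m(x_1,x_2)}=2x_1+x_2$ confirms), and the well-foundedness of the pointwise-strict order genuinely needs $X$ nonempty, a hypothesis the paper leaves implicit.
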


\noindent
Note that given a reduction order $\succeq$ defined as above, by
\cref{prop:ro-term}, if we have $t\succ u$ for every rule $\alpha:t\To u$ the
\TRS{2} is $W$-terminating.

\begin{example}
  \label{ex:mon-term}
  \label{mon-term}
  Consider the \TRS2~$\Mon$ of \cref{ex:rs-mon}. We consider the set $X=\N\setminus{0}$ and interpret the symbols as
  \begin{align*}
    \intp{m(x_1,x_2)}&=2x_1+x_2
    &
    \intp{e}&=1    
  \end{align*}
  All the rules are decreasing since we have
  \begin{align*}
    \intp{m(m(x_1,x_2),x_3)}=4x_1+2x_2+x_3&>2x_1+2x_2+x_3=\intp{m(x_1,m(x_2,x_3))}
    \\
    \intp{m(e,x_1)}=2+x_1&>x_1=\intp{x_1}
    \\
    \intp{m(x_1,e)}=2x_1+1&>x_1=\intp{x_1}    
  \end{align*}
  and the rewriting system is terminating.
\end{example}

We now briefly recall the notion of \emph{critical pair},
see~\cite{malbos2016homological} for a more detailed presentation. We say that a
branching $(\alpha_1,\alpha_2)$ is \emph{smaller} than a branching
$(\beta_1,\beta_2)$ when the second can be obtained from the first by
``extending the context'', \ie when there exists a context~$C$ and a
morphism~$f$ of suitable types such that $\beta_i=C[\alpha_i\circ f]$ for
$i=1,2$. In this case, the confluence of the first branching implies the
confluence of the second one (see the diagram on the right above). The notion of
context can be generalized to define the notion of a binary context~$C$, with
two holes, each of which occurs exactly once: we write $C[t,u]$ for the context
where the holes have respectively been substituted with terms~$t$ and~$u$. A
branching is \emph{orthogonal} when it consists of two rewriting steps at
disjoint positions, \ie when it is of the form
\[
  \begin{tikzcd}[sep=15ex]
    C[u_1\circ f_1,t_2\circ f_2]&\ar[l,Rightarrow,"{C[\alpha_1\circ f_1,t_2\circ f_2]}"']C[t_1\circ f_1,t_2\circ f_2]\ar[r,Rightarrow,"{C[t_1\circ f_1,\alpha_2\circ f_2]}"]&C[t_1\circ f_1,u_2\circ f_2]
  \end{tikzcd}
\]
for some binary context $C$, rewriting rules $\alpha_i:t_i\To u_i$ in $\S_2$ and
morphisms $f_i$ in $\freecat\S_1$ of suitable types. A branching forms a
\emph{critical pair}\label{critical-pair} when it is not orthogonal and minimal (\wrt the above
order). A \TRS{} with a finite number of rewriting rules always have a finite
number of critical pairs and those can be computed
efficiently~\cite{baader1999term}.

\begin{lemma}
  \label{lem:trs-cb-lc}
  A \TRS{2}~$\S$ is locally $W$-confluent when all its critical $W$-branchings
  are $W$-confluent.
\end{lemma}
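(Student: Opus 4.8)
The plan is to run the classical critical-pair analysis (Knuth--Bendix/Huet, in the polygraphic formulation of~\cite{malbos2016homological}), with the extra care that confluence diagrams must be checked to commute up to the congruence~$\Cohto$ and not merely on the nose. I would start from an arbitrary local $W$-branching
\[
  \beta_1=C_1[\alpha_1\circ f_1]\colon t\To u_1,
  \qquad
  \beta_2=C_2[\alpha_2\circ f_2]\colon t\To u_2,
\]
with $\alpha_1,\alpha_2\in W$ and common source a term~$t$ of some arity~$n$, so that $\beta_1,\beta_2$ live in the hom-\ARS{}~$\S(n,1)$, and split into two cases according to whether the positions of the two redexes in~$t$ are disjoint.

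First, the disjoint case: then $(\beta_1,\beta_2)$ is an orthogonal branching, of the shape displayed just before the statement for a binary context~$C$, rules $\alpha_i\colon t_i\To u_i$ and substitutions~$f_i$. I would close it with the steps $C[u_1\circ f_1,\alpha_2\circ f_2]$ and $C[\alpha_1\circ f_1,u_2\circ f_2]$, both in $\freecat W$ (since $\alpha_1,\alpha_2\in W$) and of arity~$n$; the two resulting composites are already \emph{equal} in $\freegpd\S_2$, both being the horizontal composite $C[\alpha_1\circ f_1,\alpha_2\circ f_2]$, the two factorisations coinciding by functoriality of the action of bicontexts on rewriting paths (equivalently, the interchange law of the free $(2,1)$-category). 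In particular they are $\Cohto$-equivalent, so the branching is $W$-confluent.

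Second, the non-disjoint case. The ``smaller than'' relation on branchings is well founded, and extending the context preserves non-orthogonality, so by descending from $(\beta_1,\beta_2)$ one reaches a minimal non-orthogonal branching $(\alpha_1',\alpha_2')$, which is by definition a critical $W$-branching (with $\alpha_1',\alpha_2'\in W$, as subpatterns of $\alpha_1,\alpha_2$ in context), and $(\beta_1,\beta_2)=D[(\alpha_1',\alpha_2')\circ g]$ for some context~$D$ and substitution~$g$. By hypothesis $(\alpha_1',\alpha_2')$ is $W$-confluent, say closed by $W$-paths $\pi_1',\pi_2'$ with $\pgpd{\alpha_1'\pcomp\pi_1'}=\pgpd{\alpha_2'\pcomp\pi_2'}$. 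Applying the bicontext $(D,g)$ — which preserves $\freecat W$ and, being functorial, respects $\Cohto$ — transports this into a confluence of $(\beta_1,\beta_2)$ via $D[\pi_1'\circ g]$ and $D[\pi_2'\circ g]$, with the required identity $\pgpd{\beta_1\pcomp D[\pi_1'\circ g]}=\pgpd{\beta_2\pcomp D[\pi_2'\circ g]}$. Since every local $W$-branching falls into one of the two cases, this shows $\S$ is locally $W$-confluent.

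The work is routine; the point I would be most careful about is that each confluence square commutes up to~$\Cohto$. In the orthogonal case, and in the ``variable overlap'' subcases folded into the second case (one redex strictly below a variable occurrence of the other rule, where that occurrence may be erased or duplicated in the cartesian setting), this holds for free, as an instance of interchange and functoriality already valid in~$\pgpd\S$; new information is used only through the hypothesis on critical $W$-branchings, which is where genuine overlaps are handled. Secondarily, one must check throughout that the closing paths remain inside $\freecat W$ and inside the fixed hom-\ARS{}~$\S(n,1)$, i.e.\ that all relevant arities are preserved by the context extensions involved.
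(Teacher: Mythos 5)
Your proof is correct and follows essentially the same route as the paper, whose own argument is only a two-sentence sketch (orthogonal branchings commute for free; non-minimal branchings reduce to critical ones by context extension). Your explicit treatment of the variable-overlap subcase via interchange/functoriality, and your attention to the fact that the confluence squares need only commute up to~$\Cohto$, fill in exactly the details the paper leaves implicit.
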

\begin{proof}
  Suppose that all critical $W$-branchings are confluent. A non-overlapping
  $W$-branching is easily shown to be $W$-confluent. A non-minimal $W$-branching
  is greater than a minimal one, which is $W$-confluent by hypothesis, and is
  thus itself also $W$-confluent.
\end{proof}

\noindent
We write $W_3\subseteq\S_3$ for the set of coherence relations $A:\pi\To\rho$
such that both $\pi$ and $\rho$ belong to $\freegpd W$. As a useful particular
case, we have the following variant of the Squier theorem:

\begin{lemma}
  If \TRS{2}~$\S$ has a coherence relation in~$W_3$ corresponding to a choice of
  confluence for every critical $W$-branching then it is locally $W$-confluent.
\end{lemma}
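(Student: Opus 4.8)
The plan is to invoke \cref{lem:trs-cb-lc}, which already reduces local $W$-confluence of $\S$ to the $W$-confluence of all of its critical $W$-branchings. So all that remains is to observe that the hypothesis — the presence, for every critical $W$-branching, of a coherence relation in $W_3$ realizing a chosen confluence of it — makes every such branching $W$-confluent.

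Concretely, I would fix a critical $W$-branching $(\alpha_1,\alpha_2)$ with $\alpha_i:t\To u_i$, writing $n$ for the arity of $t$. By hypothesis there is a choice of confluence for it, that is, cofinal rewriting paths $\pi_1:u_1\Pathto v$ and $\pi_2:u_2\Pathto v$ in $\freecat W$, together with a coherence relation $A\in W_3\subseteq\S_3$ whose source and target are, in one order or the other, $\alpha_1\pcomp\pi_1$ and $\alpha_2\pcomp\pi_2$; that $A$ indeed lands in $W_3$ is automatic since $\alpha_i\in W$ and $\pi_i\in\freecat W\subseteq\freegpd W$. Since $\Cohto$ is, by definition, the smallest congruence identifying the source and target of every coherence relation of $\S$, this gives $\alpha_1\pcomp\pi_1\Cohto\alpha_2\pcomp\pi_2$, hence $\pgpd{\alpha_1\pcomp\pi_1}=\pgpd{\alpha_2\pcomp\pi_2}$ in $\pgpd\S$ — which is precisely the defining condition for $(\alpha_1,\alpha_2)$ to be $W$-confluent. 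As this works uniformly for every critical $W$-branching, \cref{lem:trs-cb-lc} then yields that $\S$ is locally $W$-confluent.

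There is no genuine difficulty here: the argument is an unfolding of the definition of $W$-confluence of a branching together with an appeal to the previous lemma. The only point worth making explicit — and the closest thing to an obstacle — is the bookkeeping identifying the coherence relation $A\in\S_3$, which lives at arity $n$, with a coherence relation of the hom-level \ARS2 $\S(n,1)$ of \cref{hom-ars}, obtained by taking the trivial context and the identity substitution; this is what allows the congruence $\cohto$ used to express local confluence hom-wise to identify $\alpha_1\pcomp\pi_1$ with $\alpha_2\pcomp\pi_2$.
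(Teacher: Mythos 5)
Your proposal is correct and is exactly the argument the paper intends (the paper states this lemma without proof, as an immediate consequence of \cref{lem:trs-cb-lc}): the hypothesized coherence relation in $W_3$ directly yields $\pgpd{\alpha_1\pcomp\pi_1}=\pgpd{\alpha_2\pcomp\pi_2}$ for each critical $W$-branching, which is the definition of that branching being $W$-confluent, and \cref{lem:trs-cb-lc} then gives local $W$-confluence. Your remark about identifying the coherence relation with one of the hom-level \ARS2 via the trivial bicontext is a reasonable piece of bookkeeping to make explicit.
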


\begin{example}
  \label{ex:mon-confl}
  The \TRS{2}~$\Mon$ of \cref{ex:rs-mon}. By definition, every critical pair is confluent and $\Mon$ is thus locally confluent. From \cref{ex:mon-term,prop:lc+n-c2}, we deduce that it is confluent.
\end{example}

\noindent
As a direct consequence of \cref{prop:ars-cr,coherent-rigid}, we have

\begin{proposition}
  \label{lem:tc-coh}
  \label{2-coh-rigid}
  If~$\S$ is $W$-terminating and locally $W$-confluent then it is $W$-coherent, and the subgroupoid~$\W$ of $\pgpd\S$ generated by~$W$ is thus rigid.
\end{proposition}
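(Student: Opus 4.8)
The plan is to deduce this entirely from the abstract results of \cref{sec:ars}, applied hom-wise, together with the observation that all the rewriting notions in play for a \TRS{2} are, by definition, hom-wise notions. Recall (see \cref{hom-ars} and the discussion following it) that saying $\S$ is $W$-terminating and locally $W$-confluent means precisely that, for every $n\in\N$, the \ARS{2} $\S(n,1)$ is $W(n,1)$-terminating and locally $W(n,1)$-confluent; and that the groupoid $\pgpd{\S(n,1)}$ presented by this hom-\ARS{2} is canonically the hom-groupoid $\pgpd\S(n,1)$ of the presented $2$-theory. So I would fix $n\in\N$ and work inside $\S(n,1)$.

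First, by Newman's lemma at the level of \TRS{2}, namely \cref{prop:lc+n-c2}, the system $\S$ is $W$-confluent, hence each $\S(n,1)$ is $W(n,1)$-confluent. Since $\S(n,1)$ is moreover $W(n,1)$-terminating, it is weakly $W(n,1)$-normalizing by \cref{lem:term-wn}. We are then exactly in the hypotheses of the abstract Squier theorem \cref{prop:ars-cr}, which yields that $\S(n,1)$ is $W(n,1)$-coherent. As $n$ was arbitrary, $\S$ is $W$-coherent by definition.

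For the last assertion, apply \cref{coherent-rigid} to each \ARS{2} $\S(n,1)$: from $W(n,1)$-coherence it follows that the subgroupoid of $\pgpd{\S(n,1)}$ generated by $W(n,1)$ is rigid. Under the identification above, this subgroupoid is precisely the hom-category $\W(n,1)$ of the sub-$2$-theory $\W\subseteq\pgpd\S$ generated by $W$. Finally, since $\pgpd\S$ is cartesian, for all $m,n$ we have $\W(m,n)\cong\W(m,1)^n$, and a finite power of rigid categories is rigid; hence $\W(m,n)$ is rigid for every $m,n$, i.e.\ $\W$ is rigid.

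There is essentially no obstacle here: the proof is pure bookkeeping, chaining \cref{prop:lc+n-c2}, \cref{lem:term-wn}, \cref{prop:ars-cr} and \cref{coherent-rigid}. The only two points one must keep straight are (i) that the rewriting properties of a \TRS{2} are defined hom-wise, so the machinery of \cref{sec:ars} applies verbatim to each $\S(n,1)$, and (ii) that rigidity of the Lawvere $2$-theory $\W$ is detected on the hom-categories $\W(n,1)$ alone, thanks to cartesianness.
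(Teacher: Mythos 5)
Your proof is correct and follows essentially the same route as the paper, which simply cites \cref{prop:ars-cr} and \cref{coherent-rigid} applied hom-wise (with \cref{prop:lc+n-c2} and \cref{lem:term-wn} implicitly supplying confluence and weak normalization, exactly as you spell out). Your extra remark that rigidity of the hom-categories $\W(m,n)$ follows from that of the $\W(m,1)$ by cartesianness is a reasonable piece of bookkeeping the paper leaves implicit.
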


\noindent
For instance, from \cref{ex:mon-term,ex:mon-confl}, we deduce that the \TRS2~$\Mon$ is coherent, thus showing the coherence property (C1) for monoidal categories.

%
Fix a $W$-convergent \TRS{2}~$\S$. By \cref{lem:tc-coh}, $\pgpd\S$ is $W$-coherent, by \cref{prop:rigid-2equivalence}, the quotient functor $\pgpd\S\to\pgpd\S/W$ is a local equivalence, and by \cref{prop:quot-nf}, $\pgpd\S/W$ is obtained from $\pgpd\P$ by restricting to $1$-cells in normal form. Moreover, in good situations, we can provide a description of the quotient category~$\pgpd\S/W$ by applying \cref{prop:ars-nf-pres} hom-wise.


\section{Coherence for symmetric monoidal categories}
\label{smc-coh}
In this section, we illustrate the use of the methods developed in the article, by applying them in order to recover the coherence theorems for symmetric monoidal categories~\cite{joyal1993braided}, which requires quotienting the Lawvere 2-theory of symmetric monoids by a subtheory~$\W$, generated by the associator and the unitors. Related results using rewriting in polygraphs where obtained earlier~\cite{ acclavio2016constructive,guiraud2012coherence,lafont2003towards}. They however require heavier computations since manipulations of variables (duplication, erasure and commutation) need to be implemented as explicit rules in this context.

\subsection{A theory for symmetric monoidal categories}
A \emph{symmetric monoidal category} is a monoidal category equipped with a natural isomorphism $\gamma_{x,y}:x\otimes y\to y\otimes x$, called the \emph{symmetry}, satisfying the three axioms recalled in \cref{smc-def}. A symmetric monoidal category is \emph{strict} when the structural isomorphisms $\alpha$, $\lambda$ and $\rho$ are identities (but we do not require $\gamma$ to be an identity). We write $\SMonCat$ (\resp $\StrSMonCat$) for the category of symmetric monoidal categories (\resp strict ones).

We write $\SMon$ for the \TRS{2} obtained from $\Mon$, see \cref{ex:rs-mon}, by adding a rewriting rule
\[
  \gamma:m(x_1,x_2)\To m(x_2,x_1)
\]
corresponding to symmetry, together with a coherence relation
\[
  F:\gamma(x_1,x_2)\pcomp\gamma(x_2,x_1)\TO\id_{m(x_1,x_2)}
\]
which can be pictured as
\[
  \begin{tikzcd}
  m(x_1,x_2)\ar[d,equals]\ar[r,Rightarrow,"\gamma"]\ar[dr,phantom,"\overset F\TO"]&m(x_2,x_1)\ar[d,Rightarrow,"\gamma"]\\
  m(x_1,x_2)\ar[r,equals]&m(x_1,x_2)
\end{tikzcd}
\]
as well as the coherence relations
\[
  \begin{tikzcd}[arrows={Rightarrow},column sep=small]
    m(m(x_1,x_2),x_3)\ar[d,"\alpha"']\ar[r,"\gamma"]\ar[drr,phantom,"\overset G\TO"]&m(m(x_2,x_1),x_3)\ar[r,"\alpha"]&m(x_2,m(x_1,x_3))\ar[d,"\gamma"]\\
    m(x_1,m(x_2,x_3))\ar[r,"\gamma"']&m(m(x_2,x_3),x_1)\ar[r,"\alpha"']&m(x_2,m(x_3,x_1))
  \end{tikzcd}
  \qquad
  \begin{tikzcd}[arrows={Rightarrow},column sep=0pt]
    m(e,x_1)\ar[dr,"\lambda"']\ar[rr,"\gamma"]&\ar[d,phantom,pos=.3,"\overset I\TO"]&\ar[dl,"\rho"]m(x_1,e)\\
    &x_1
  \end{tikzcd}
\]
It is immediate to see that the algebras of $\SMon$ are precisely symmetric
monoidal categories:

\begin{proposition}
  \label{smoncat-alg}
  The category $\Alg(\pgpd\SMon)$ is isomorphic to the category $\SMonCat$.
\end{proposition}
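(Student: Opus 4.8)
The plan is to establish an isomorphism of $2$-categories $\Alg(\pgpd\SMon)\cong\SMonCat$ by unwinding both sides and matching data. First I would recall, as in \cref{ex:trs-alg}, that $\Alg(\pgpd\Mon)\cong\MonCat$: an algebra $C:\pgpd\SMon\to\Cat$ is a product-preserving $2$-functor, so it is determined by the category $C=C\,1$ together with the images of the generating $1$-cells $m:2\to1$ and $e:0\to1$ (giving a functor $\otimes:C\times C\to C$ and an object $e\in C$), the images of the generating $2$-cells $\alpha,\lambda,\rho$ (giving natural isomorphisms, invertible because we are in a $(2,1)$-category), and the requirement that the images of the coherence relations $A,B,C,D,E$ hold, which are exactly the monoidal category axioms (the pentagon plus triangle, together with $B,D,E$, which are consequences). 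Since $\SMon$ is obtained from $\Mon$ by adjoining the single $1$-cell generator $\gamma$ and the coherence relations $F$, $G$, $I$, an algebra of $\pgpd\SMon$ is precisely an algebra of $\pgpd\Mon$ equipped with an extra natural isomorphism $\gamma_{x,y}:x\otimes y\to y\otimes x$ (the image of $\gamma$) subject to the three equations forced by $F$, $G$, $I$. The key computation is to check that these three equations are literally the three axioms in the definition of a symmetric monoidal category recalled in \cref{smc-def}: $F$ gives $\gamma_{y,x}\circ\gamma_{x,y}=\id_{x\otimes y}$ (the first hexagon-free axiom), $G$ gives the hexagon axiom relating $\gamma$, $\alpha$, and tensoring, and $I$ gives $\lambda_x\circ\gamma_{x,e}=\rho_x$. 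Naturality of $\gamma$ is automatic since the image of a $1$-cell generator under a $2$-functor is a genuine functor $C^2\to C$ and the image of the rewriting step $\gamma$ is a $2$-cell, hence a natural transformation.

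Next I would carry out the analogous matching on $1$- and $2$-cells. A pseudo-natural transformation $F:C\To D$ between algebras consists of a functor $F:C\to D$ together with natural isomorphisms $\phi_f$ indexed by $1$-cells $f$, compatible with products, composition, and $2$-cells; by product-preservation and functoriality this data is generated by $\phi_m$ and $\phi_e$, i.e. by the comparison isomorphisms $F(x)\otimes F(y)\to F(x\otimes y)$ and $e_D\to F(e_C)$, and the compatibility with the $2$-cells $\alpha,\lambda,\rho,\gamma$ of $\SMon$ is exactly the coherence diagrams defining a strong symmetric monoidal functor (the extra condition coming from $\gamma$ being the hexagon-type compatibility of $\phi_m$ with the symmetries). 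Compatibility with the coherence relations $A,\dots,I$ imposes nothing new on morphisms since those are equalities of $2$-cells in $\pgpd\SMon$. Similarly a modification $\mu:F\TO G$ is a natural transformation $F\TO G$ compatible with the $2$-cells of $\SMon$, which unwinds to the definition of a monoidal natural transformation between symmetric monoidal functors. Assembling, the assignment $C\mapsto(C,\otimes,e,\alpha,\lambda,\rho,\gamma)$ and the identity-on-underlying-data assignments on $1$- and $2$-cells give a strict $2$-functor $\Alg(\pgpd\SMon)\to\SMonCat$, with an evident inverse, and both composites are the identity.

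The main obstacle, and the only place that needs genuine care, is the verification that the presentation data of $\SMon$ — in particular the generating set of coherence relations $\{A,B,C,D,E,F,G,I\}$ — matches the \emph{chosen} axioms of a symmetric monoidal category, given that $\SMon$ carries more coherence relations than the three axioms in \cref{smc-def} (just as $\Mon$ carries five rather than two). I would handle this exactly as in \cref{ex:trs-alg}: the extra relations $B,C,D,E$ (and whichever among $F,G,I$ are not literally the chosen axioms) are derivable consequences of the chosen ones, by the classical Kelly-type argument \cite{kelly1964maclane} (see also \cite{guiraud2012coherence}), so imposing all of them is equivalent to imposing the minimal set; hence a product-preserving $2$-functor out of $\pgpd\SMon$ is the same as a symmetric monoidal category, with no loss or gain. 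The rest is bookkeeping: checking that composition of pseudo-natural transformations corresponds to composition of symmetric monoidal functors, and that the $2$-categorical structures (vertical/horizontal composition of modifications) agree, all of which is routine once the generators-and-relations description is in hand.
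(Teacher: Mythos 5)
Your proposal is correct and is essentially the argument the paper has in mind: the paper gives no written proof, declaring the result ``immediate'' and pointing back to \cref{ex:trs-alg} for the monoidal case, and your unwinding of generators ($m$, $e$, $\gamma$), generating $2$-cells, and coherence relations $A$--$E$, $F$, $G$, $I$ --- together with the Kelly-type observation that the redundant relations are derivable --- is exactly the intended verification. The only nitpick is that $I$ as drawn reads $\rho_x\circ\gamma_{e,x}=\lambda_x$ rather than $\lambda_x\circ\gamma_{x,e}=\rho_x$, but these are interchangeable in the presence of $F$, and your closing remark about derivability already covers this.
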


Since our aim is to study the relationship between symmetric monoidal categories and their strict version, it is natural to consider the set of rewriting rules
\[
  W=\set{\alpha,\lambda,\rho}
\]
\ie all the rules excepting~$\gamma$. Namely,

\begin{lemma}
  The category $\Alg(\SMon/W)$ is isomorphic to the category $\StrSMonCat$ of
  strict monoidal categories.
\end{lemma}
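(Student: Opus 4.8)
The plan is to combine the universal property of the quotient $2$-theory with the identification $\Alg(\pgpd\SMon)\cong\SMonCat$ of \cref{smoncat-alg}. First I would argue that $\Alg(\pgpd\SMon/W)$ is isomorphic to the full sub-$2$-category of $\Alg(\pgpd\SMon)$ spanned by the $W$-strict algebras, i.e.\ those product-preserving $2$-functors $C\colon\pgpd\SMon\to\Cat$ sending every $2$-cell in $W$ to an identity. Indeed, working hom-wise, the quotient $2$-theory satisfies $(\pgpd\SMon/W)(m,n)=\pgpd\SMon(m,n)/\W(m,n)$, so by the universal property of quotient categories (\cref{quotient-category}) a $2$-functor $\pgpd\SMon/W\to\Cat$ amounts to a $2$-functor $\pgpd\SMon\to\Cat$ whose action on each hom-category is $\W(m,n)$-strict, that is, to a $W$-strict $2$-functor; and since the quotient $2$-functor $q\colon\pgpd\SMon\to\pgpd\SMon/W$ is the identity on $0$-cells, preservation of products is transported back and forth.

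The same reasoning applies to the $1$- and $2$-cells of $\Alg$: a pseudo-natural transformation between $W$-strict algebras $C=\bar C\circ q$ and $D=\bar D\circ q$ has the same defining data whether considered over $\pgpd\SMon$ or over $\pgpd\SMon/W$, since its components $\phi_f$ are indexed by $1$-cells (which $q$ only quotients) and its compatibility with the $2$-cells of $\pgpd\SMon$ reduces to compatibility with those of $\pgpd\SMon/W$, because $q$ is full on $2$-cells (\cref{lem:quotient-surj}, applied hom-wise) and $C$, $D$ factor through $q$; similarly for modifications. Hence the identification of $\Alg(\pgpd\SMon/W)$ with the $W$-strict algebras of $\pgpd\SMon$ is an isomorphism of ($2$-)categories.

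It then remains to recognize the $W$-strict algebras. Under the isomorphism $\Alg(\pgpd\SMon)\cong\SMonCat$ of \cref{smoncat-alg}, an algebra $C$ sends $m$ and $e$ to the tensor product and unit, and the generating $2$-cells $\alpha$, $\lambda$, $\rho$, $\gamma$ to the associator, the left and right unitors, and the symmetry of the corresponding symmetric monoidal category. A $2$-functor is $W$-strict precisely when it is $\W$-strict, and $\W$ is generated, from $\alpha$, $\lambda$ and $\rho$, by vertical composition, whiskering, products and inverses — operations which all send identity $2$-cells to identity $2$-cells; therefore $C$ is $W$-strict if and only if $C\alpha$, $C\lambda$ and $C\rho$ are identities, i.e.\ exactly when the associator and unitors of the associated symmetric monoidal category are identities, with no constraint on $\gamma$. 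This is the definition of a strict symmetric monoidal category, and the pseudo-natural transformations and modifications between such algebras restrict to the strong monoidal functors and monoidal natural transformations of $\StrSMonCat$, yielding $\Alg(\pgpd\SMon/W)\cong\StrSMonCat$. The part requiring the most care is the second one: checking that the morphisms and $2$-cells of $\Alg$ (and not merely the objects) transfer across $q$, and unwinding "$W$-strict'' into "$\alpha$, $\lambda$, $\rho$ act as identities'' via the fact that $\W$ is generated by these three $2$-cells.
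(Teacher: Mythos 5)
Your proof is correct and matches the argument the paper intends but leaves implicit: the lemma is stated without proof, by analogy with \cref{ex:trs-alg}, where the same identification (algebras of the quotient theory are exactly the algebras sending the quotiented $2$-cells to identities) is asserted for $\Mon$. Your write-up fills in precisely that reasoning --- the universal property of the quotient applied hom-wise, the reduction of $W$-strictness to $C\alpha$, $C\lambda$, $C\rho$ being identities, and the transfer of pseudo-natural transformations and modifications (where the one point needing care, which you handle, is that $q$ also quotients $1$-cells, so one uses that a $W$-strict algebra sends $\sim_\W$-equivalent $1$-cells to equal functors).
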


\begin{lemma}
  The \TRS2 $\SMon$ is $W$-coherent.
\end{lemma}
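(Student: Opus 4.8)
The strategy is to invoke the abstract machinery developed in \cref{sec:ars,sec:trs}: by \cref{2-coh-rigid} (equivalently \cref{lem:tc-coh}), it suffices to show that $\SMon$ is $W$-terminating and locally $W$-confluent. Both conditions are checked hom-wise, so in fact we work in each \ARS2 $\SMon(n,1)$ with respect to $W(n,1)$.

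First I would establish $W$-termination. The rewriting rules in $W$ are exactly $\alpha$, $\lambda$ and $\rho$, which are the rules of $\Mon$; the extra rule $\gamma$ is not in $W$. Hence the interpretation given in \cref{ex:mon-term}, namely $\intp{m(x_1,x_2)}=2x_1+x_2$ and $\intp{e}=1$ over $X=\N\setminus\set0$, already yields $t\succ u$ for every rule $\alpha:t\To u$ in $W$, and \cref{prop:ro-term} gives $W$-termination. (Note we do \emph{not} need $\gamma$ to be decreasing, which is fortunate since it is not: $\gamma$ merely permutes arguments.)

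Next I would establish local $W$-confluence using \cref{lem:trs-cb-lc}: it is enough to check that every critical $W$-branching is $W$-confluent, where a $W$-branching consists of two rewriting steps whose underlying rules lie in $W$. Since the rules involved are only $\alpha$, $\lambda$, $\rho$, the relevant critical branchings are precisely those among these three rules, i.e.\ the five critical pairs of $\Mon$ listed in \cref{ex:rs-mon}, with $0$-sources $m(m(m(x_1,x_2),x_3),x_4)$, $m(m(e,x_1),x_2)$, $m(m(x_1,e),x_2)$, $m(m(x_1,x_2),e)$ and $m(e,e)$. For each of these, the coherence relations $A$, $B$, $C$, $D$, $E$ of $\Mon$ (which survive in $\SMon$) provide a confluence diagram all of whose edges are rewriting paths in $\freecat W$, so each belongs to $W_3$ in the sense of the Squier-type lemma preceding \cref{ex:mon-confl}. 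One must also confirm that adding $\gamma$ creates no new \emph{$W$-critical} branchings: any overlap involving a $\gamma$-step has at least one edge not in $W$, hence is not a $W$-branching, and the new coherence relations $F$, $G$, $I$ are likewise not in $W_3$; so they are simply irrelevant to $W$-local-confluence. Therefore $\SMon$ is locally $W$-confluent, and by \cref{2-coh-rigid} it is $W$-coherent.

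The main point requiring care — though it is conceptually routine — is the bookkeeping that no critical $W$-branching is missed: one should argue that a $W$-branching at a term is, after restricting attention to the rules in $W$, exactly a $\Mon$-branching, so the critical $W$-branchings of $\SMon$ coincide with the critical pairs of $\Mon$, all already known to be confluent via $A,\dots,E$. Everything else is an immediate reuse of \cref{ex:mon-term,ex:mon-confl} together with the general theorems.
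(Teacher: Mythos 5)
Your proposal is correct and follows essentially the same route as the paper: reduce to $W$-termination (via the interpretation of \cref{ex:mon-term}, which only needs to handle $\alpha$, $\lambda$, $\rho$) and local $W$-confluence (via the five critical pairs of $\Mon$, confluent by definition through the coherence relations $A$--$E$), then conclude by \cref{lem:tc-coh}. The only difference is that you spell out the bookkeeping — in particular that $\gamma$ contributes no new critical $W$-branchings — which the paper leaves implicit.
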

\begin{proof}
  Since $W$ consists in $\alpha$, $\lambda$ and $\rho$ only, this can be deduced as in the case of monoids: the \TRS2 is $W$-terminating by \cref{ex:mon-term} and $W$-locally confluent by definition (\cref{ex:rs-mon}), it is thus $W$-coherent by \cref{lem:tc-coh}.
\end{proof}

Provided that \cref{conj:strict-equiv} holds, we could deduce that any symmetric
monoidal category is monoidally equivalent to a strict one.
Note that the above reasoning only depends on the convergence of the subsystem
induced by~$W$, \ie on the fact that every diagram made of~$\alpha$, $\lambda$
and~$\rho$ commutes, but it does not require anything on diagrams
containing~$\gamma$'s. In particular, if we removed the compatibility relations
$G$, $H$, $I$ and $J$, the strictification theorem would still hold. The
resulting notion of strict symmetric monoidal category would however be worrying
since, for instance, in absence of~$I$, the 2-cell
\[
  \gamma_{e,x_1}:m(e,x_1)\To m(x_1,e)
\]
would induce, in the quotient, a non-trivial automorphism
\[
  \gamma_{e,x_1}:x_1\To x_1
\]
of each variable~$x_1$. We prove below (\cref{smoncat-strong-coh}) a variant of the coherence theorem that is ``stronger'' in the sense that it requires these axioms to hold and implies that the identity is the only automorphism of~$x_1$.


\subsection{Every affine diagram commutes}
We have seen that, for the theory of monoidal categories, ``every diagram commutes'', in the sense that $\pgpd\Mon$ is a $2$-rigid $(2,1)$-category. For symmetric monoidal categories, we do not expect this to hold since we have two rewriting paths
\begin{align*}
  \gamma_{x_1,x_1}:m(x_1,x_1)&\To m(x_1,x_1)
  &
  \id_{m(x_1,x_1)}:m(x_1,x_1)&\To m(x_1,x_1)
\end{align*}
which are both from $m(x_1,x_1)$ to itself, and are not equal in general as explained in the introduction. It can however be shown that it holds for the subclass of $2$-cells whose source and target are affine terms:

\begin{definition}
  A term~$t$ is \emph{affine} if no variable occurs twice, \ie $\occurrences it\leq 1$ for every index~$i$.
\end{definition}

\noindent
We now explain this, thus recovering a well-known property~\cite[Theorem 4.1]{maclane1963natural} using rewriting techniques. In order to use those, it will be convenient to work with a variant $\SMon'$ of the \TRS2 $\SMon$, obtained by adding a new generator~$\delta$ as well as coherence relations corresponding to the local confluence diagrams: this variant will allow proving \cref{P-P'} below. A similar completion has been investigated by Lafont~\cite[Figure~9]{lafont2003towards}, although working in a monoidal setting whereas we are working in a cartesian one.
We write~$\SMon'$ for the \TRS2 obtained from $\SMon$ by adding a rewriting rule
\[
  \delta:m(x_1,m(x_2,x_3))\To m(x_2,m(x_1,x_3))
\]
removing the coherence relation~$G$ and adding coherence relations
\[
  \begin{tikzcd}[arrows={Rightarrow}]
    m(x_1,m(x_2,x_3))\ar[d,equals]\ar[dr,phantom,"\overset{F'}\TO"]\ar[r,"\delta"]&m(x_2,m(x_1,x_3))\ar[d,"\delta"]\\
    m(x_1,m(x_2,x_3))\ar[r,equals]&m(x_1,m(x_2,x_3))
  \end{tikzcd}
  \qquad
  \begin{tikzcd}[arrows={Rightarrow}]
    m(m(x_1,x_2),x_3)\ar[d,"\alpha"']\ar[dr,phantom,"\overset{G'}\TO"]\ar[r,"\gamma"]&m(m(x_2,x_1),x_3)\ar[d,"\alpha"]\\
    m(x_1,m(x_2,x_3))\ar[r,"\delta"']&m(x_2,m(x_1,x_3))
  \end{tikzcd}
\]
\[
  \begin{tikzcd}[arrows={Rightarrow},column sep=small]
    m(m(x_1,x_2),x_3)\ar[d,"\alpha"']\ar[rr,"\gamma"]\ar[drr,phantom,"\overset H\TO"]&&m(x_3,m(x_1,x_2))\ar[d,equals]\\
    m(x_1,m(x_2,x_3))\ar[r,"\gamma"']&m(x_1,m(x_3,x_2))\ar[r,"\delta"']&m(x_3,m(x_1,x_2))
  \end{tikzcd}
  \quad
  \begin{tikzcd}[arrows={Rightarrow},column sep=0pt]
    m(x_1,e)\ar[dr,"\rho"']\ar[rr,"\gamma"]&\ar[d,phantom,pos=.3,"\overset J\TO"]&\ar[dl,"\lambda"]m(e,x_1)\\
    &x_1
  \end{tikzcd}
\]
\[
  \begin{tikzcd}[arrows={Rightarrow}]
    m(m(x_1,x_2),m(x_3,x_4))\ar[drr,phantom,"\overset K\TO"]\ar[d,"\alpha"']\ar[rr,"\delta"]&&m(x_3,m(m(x_1,x_2),x_4))\ar[d,"\alpha"]\\
    m(x_1,m(x_2,m(x_3,x_4)))\ar[r,"\delta"']&m(x_1,m(x_3,m(x_2,x_4)))\ar[r,"\delta"']&m(x_3,m(x_1,m(x_2,x_4)))
  \end{tikzcd}
\]
\[
  \begin{tikzcd}[arrows={Rightarrow}]
    m(x_1,m(m(x_2,x_3),x_4))\ar[drr,phantom,"\overset L\TO"]\ar[d,"\alpha"']\ar[rr,"\delta"]&&m(m(x_2,x_3),(x_1,x_4))\ar[d,"\alpha"]\\
    m(x_1,m(x_2,m(x_3,x_4)))\ar[r,"\delta"']&m(x_2,m(x_1,m(x_3,x_4)))\ar[r,"\delta"']&m(x_2,m(x_3,m(x_1,x_4)))
  \end{tikzcd}
\]
\[
  \begin{tikzcd}[arrows={Rightarrow}]
    m(x_1,m(x_2,x_3))\ar[dr,phantom,"\overset{M}\TO"]\ar[d,"\gamma"']\ar[r,"\delta"]&m(x_2,m(x_1,x_3))\ar[d,"\gamma"]\\
    m(m(x_2,x_3),x_1)\ar[r,"\alpha"']&m(x_2,m(x_3,x_1))
  \end{tikzcd}
\]
\[
  \begin{tikzcd}[arrows={Rightarrow}]
    m(x_1,m(x_2,x_3))\ar[drr,phantom,"\overset{N}\TO"]\ar[d,"\gamma"']\ar[r,"\delta"]&m(x_2,m(x_1,x_3))\ar[r,"\delta"]&m(x_2,m(x_3,x_1))\ar[d,"\delta"]\\
    m(x_1,m(x_3,x_2))\ar[r,"\delta"']&m(x_3,m(x_1,x_2))\ar[r,"\gamma"']&m(x_3,m(x_2,x_1))
  \end{tikzcd}
\]
\[
  \begin{tikzcd}[arrows={Rightarrow}]
    m(x_1,m(x_2,m(x_3,x_4)))\ar[drr,phantom,"\overset{O}\TO"]\ar[d,"\delta"']\ar[r,"\delta"]&m(x_2,m(x_1,m(x_3,x_4)))\ar[r,"\delta"]&m(x_2,m(x_3,m(x_1,x_4)))\ar[d,"\delta"]\\
    m(x_1,m(x_3,m(x_2,x_4)))\ar[r,"\delta"']&m(x_3,m(x_1,m(x_2,x_4)))\ar[r,"\delta"']&m(x_3,m(x_2,m(x_1,x_4)))
  \end{tikzcd}
\]
\[
  \begin{tikzcd}[arrows={Rightarrow},column sep=-5pt]
    m(e,m(x_1,x_2))\ar[dr,"\lambda"']\ar[rr,"\delta"]&\ar[d,phantom,"\overset{P}\TO"]&m(x_1,m(e,x_2))\ar[dl,"\lambda"]\\
    &m(x_1,x_2)
  \end{tikzcd}
  \quad
  \begin{tikzcd}[arrows={Rightarrow},column sep=-5pt]
    m(x_1,m(e,x_2))\ar[dr,"\lambda"']\ar[rr,"\delta"]&\ar[d,phantom,"\overset{Q}\TO"]&m(e,m(x_1,x_2))\ar[dl,"\lambda"]\\
    &m(x_1,x_2)
  \end{tikzcd}
\]
\[
  \begin{tikzcd}[arrows={Rightarrow},column sep=0pt]
    m(x_1,m(x_2,e))\ar[dr,"\rho"']\ar[rr,"\delta"]&\ar[d,phantom,"\overset{R}\TO"]&m(x_2,m(x_1,e))\ar[dl,"\rho"]\\
    &m(x_1,x_2)
  \end{tikzcd}
\]

\noindent
The following adapts~\cite[Proposition~3.3.5]{guiraud2012coherence} from the monoidal setting to our cartesian setting, see also~\cite{acclavio2016constructive}:

\begin{proposition}
  \label{smon-completion}
  The \TRS2 $\SMon$ and $\SMon'$ present isomorphic categories.
\end{proposition}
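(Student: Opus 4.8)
The natural strategy is to show that $\SMon$ and $\SMon'$ are Tietze equivalent (\cref{trs2-tietze}) and then invoke \cref{trs2-tietze-correct}. The crucial point is that the new generator $\delta$ is \emph{definable} in $\SMon$: the zig-zag
\[
  \alpha^{-}\pcomp m(\gamma(x_1,x_2),x_3)\pcomp\alpha\ :\ m(x_1,m(x_2,x_3))\Zzto m(x_2,m(x_1,x_3))
\]
(first de-associate, then swap the two outer factors, then re-associate) has precisely the source and target of $\delta$. So a Tietze transformation (T1) applied to $\SMon$ introduces $\delta$ together with a coherence relation $\bar G:\delta\TO\alpha^{-}\pcomp m(\gamma(x_1,x_2),x_3)\pcomp\alpha$, which, composed on the left with $\alpha$, is exactly the coherence relation $G'$ of $\SMon'$ in its squared form. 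Since Tietze equivalence is defined as an equivalence relation, we are also allowed to run transformations backwards, so in particular $\bar G$ may later be replaced by $G'$.

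\textbf{Key steps.} First I would perform the (T1) step above, obtaining the system $\SMon$ extended by $\delta$ and $\bar G$ (equivalently $G'$). Next I would check that each remaining coherence relation of $\SMon'$ not already present in $\SMon$ --- namely $F'$, $H$, $J$, $K$, $L$, $M$, $N$, $O$, $P$, $Q$, $R$ --- becomes, after unfolding every occurrence of $\delta$ via $\bar G$, a $\cohto$-consequence of the coherence relations of $\SMon$ (the pentagon $A$, the unit relations $B$, $C$, $D$, $E$, the symmetry involution $F$, the hexagon $G$, and the unit--symmetry relation $I$); each such relation can therefore be adjoined by a Tietze transformation (T2). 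For instance $F':\delta\pcomp\delta\TO\id$ unfolds to $\alpha^{-}\pcomp m(\gamma\pcomp\gamma,x_3)\pcomp\alpha$, which collapses to the identity using $F$ and the pentagon; the relations $P$, $Q$, $R$, $J$ unfold to the unit coherences $B$, $C$, $D$, $E$, $I$; and $M$, $N$, $H$ unfold to (pastings of) the hexagon $G$ with $A$. Dually, once $G'$ and $M$ are available, the original hexagon $G$ is itself derivable (unfolding $M$ via $\bar G$ literally yields $G$), so $G$ can be discarded by an inverse (T2), and $\bar G$ can be traded for the squared form $G'$. Composing these transformations turns $\SMon$ into $\SMon'$, and \cref{trs2-tietze-correct} then gives the isomorphism of presented categories.

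\textbf{Main obstacle.} The conceptual content is light --- everything is "morally true" in any symmetric monoidal category --- but the genuine work is the combinatorial bookkeeping in the middle step. The hardest cases are the relations $K$, $L$, $N$, $O$ built from composites of three copies of $\delta$ (the higher interchange / Yang--Baxter-type laws): unfolding each $\delta$ into $\alpha$'s and $\gamma$'s produces a large hexagonal or bigger diagram whose commutativity must be assembled from several instances of the pentagon $A$ and the hexagon $G$, with careful attention to the bicontexts. This is exactly the cartesian-setting analogue of the monoidal computation of~\cite[Proposition 3.3.5]{guiraud2012coherence}, and it is where all the real effort goes; none of it, however, is mathematically deep, so it can reasonably be summarised rather than carried out in full.
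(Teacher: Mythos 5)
Your proposal is correct and follows essentially the same route as the paper: establish a Tietze equivalence by adding $\delta$ via (T1) with the definition $\delta=\alpha\pcomp m(\gamma(x_1,x_2),x_3)\pcomp\alpha^{-}$ (formally the relation $G'$), adjoining the remaining coherence relations by (T2) since they unfold to consequences of $A$–$F$, $G$, $I$, and finally discarding $G$ as derivable from $G'$ and $M$. The paper likewise leaves the bookkeeping for $K$–$R$ as a routine verification, so there is no substantive difference beyond the order in which the transformations are performed.
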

\begin{proof}
  By \cref{trs2-tietze-correct}, it is enough to show that both \TRS2 are Tietze
  equivalent. The commutation of~$H$ and~$I$ is immediate in presence of the
  other axioms, we can thus add them using Tietze transformations~(T2).
  Namely, we can show the commutation of~$H$ by using~$F$ and~$G$ twice:
  \[
     \begin{tikzcd}[arrows={Rightarrow}]
       m(m(x_1,x_2),x_3)\ar[dr,Rightarrow,scaling nfold=3,"G"{xshift=-6pt, yshift=1pt},shorten=20pt]\ar[dd,"\alpha"']\ar[rrr,bend left=15,"\gamma"{name=g1}]&&&\ar[lll,"\gamma"description,""{name=g2}]m(x_3,m(x_1,x_2))\ar[dd,"\gamma"]\\
       &m(m(x_1,x_3),x_2)\ar[d,"\alpha"]\ar[r,bend right=15,"\gamma"',""'{name=g4}]&\ar[l,bend right=15,"\gamma"',""'{name=g3}]m(m(x_3,x_1),x_2)\ar[ur,"\alpha"]\ar[dr,Rightarrow,scaling nfold=3,"G"{xshift=-6pt, yshift=1pt},shorten=20pt]\\
       m(x_1,m(x_2,x_3))\ar[r,"\gamma"']&m(x_1,m(x_3,x_2))\ar[r,"\gamma"']&m(m(x_3,x_2),x_1)\ar[r,"\alpha"']&m(x_3,m(x_2,x_1))
       \ar[from=g1,to=g2,Rightarrow,scaling nfold=3,shorten=8pt,"F"{', xshift=-3pt}]
       \ar[from=g3,to=g4,Rightarrow,scaling nfold=3,shorten=6pt,"F"{', xshift=-3pt}]
  \end{tikzcd}
  \]
  and the commutation of~$J$ can be obtained from~$F$ and $I$:
  \[
    \begin{tikzcd}[arrows={Rightarrow}]
      m(x_1,e)\ar[dr,"\rho"']\ar[rr,bend left,"\gamma"{name=g1}]&\ar[d,phantom,pos=.35,"\overset I\Lleftarrow"]&\ar[ll,"\gamma"description,""{name=g2}]\ar[dl,"\lambda"]m(e,x_1)\\
      &x_1
      \ar[from=g1,to=g2,Rightarrow,scaling nfold=3,shorten=5pt,"F"{', xshift=-3pt}]
    \end{tikzcd}
  \]
  Next, by a Tietze transformation~(T1), we can add the rule $\delta$ together with its definition
  \[
    \delta(x_1,x_2,x_3)
    =
    \alpha(x_1,x_2,x_3)\circ m(\gamma(x_1,x_2),x_3)\circ\alpha(x_1,x_2,x_3)^{-1}
  \]
  which is formally given by the relation~$G'$.
  From this definition, one easily shows that the coherence relations~$K$ to~$R$ are derivable and can thus be added by Tietze transformations~(T2).
  Finally, the coherence relation~$G$ is then superfluous, since it can be derived as
  \[
    \begin{tikzcd}[arrows={Rightarrow},column sep=small]
      m(m(x_1,x_2),x_3)\ar[d,"\alpha"']\ar[r,"\gamma"]\ar[dr,phantom,"\overset{G'}\TO",pos=.3]&m(m(x_2,x_1),x_3)\ar[r,"\alpha"]\ar[dr,phantom,"\overset{M}\TO",pos=.7]&m(x_2,m(x_1,x_3))\ar[d,"\gamma"]\\
      m(x_1,m(x_2,x_3))\ar[r,"\gamma"']\ar[urr,"\delta"]&m(m(x_2,x_3),x_1)\ar[r,"\alpha"']&m(x_2,m(x_3,x_1))
    \end{tikzcd}
  \]
  and can thus be removed by a Tietze transformation~(T2).
\end{proof}

\begin{lemma}
  \label{smon'-lc}
  The \TRS{2} $\SMon'$ is locally confluent.
\end{lemma}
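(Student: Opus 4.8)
The plan is to reduce the claim to a finite check on critical pairs. By \cref{lem:trs-cb-lc}, the \TRS2 $\SMon'$ is locally confluent as soon as each of its critical branchings is confluent, so the whole argument amounts to enumerating the critical branchings of $\SMon'$ and, for each one, exhibiting a confluence diagram that commutes in $\pgpd{\SMon'}$.

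First I would write down the five rewriting rules $\alpha,\lambda,\rho,\gamma,\delta$ with their left-hand sides $m(m(x_1,x_2),x_3)$, $m(e,x_1)$, $m(x_1,e)$, $m(x_1,x_2)$, $m(x_1,m(x_2,x_3))$, and compute all overlaps at non-variable positions. Since the rule set is finite this yields finitely many critical branchings, which I would group as follows: \textbf{(a)} the five branchings already present in $\Mon$ (overlaps among $\alpha,\lambda,\rho$, with sources $m(m(m(x_1,x_2),x_3),x_4)$, $m(m(e,x_1),x_2)$, $m(m(x_1,e),x_2)$, $m(m(x_1,x_2),e)$, $m(e,e)$), confluent by the relations $A$--$E$ inherited from $\Mon$; \textbf{(b)} the overlaps of $\gamma$ with another rule, namely with $\alpha$ at the root and on the inner factor, with $\lambda$, with $\rho$, and with $\delta$ at the root and on the inner factor, confluent respectively by $H$, $G'$, $I$, $J$, $M$, $N$; \textbf{(c)} the overlaps of $\delta$ with $\alpha$ (at the root, and with $\alpha$ acting on the inner factor of $\delta$), with $\delta$, with $\lambda$ (at the root and on the inner factor) and with $\rho$, confluent respectively by $K$, $L$, $O$, $P$, $Q$, $R$; and \textbf{(d)} the one remaining overlap, in which a $\delta$-redex sits inside the left factor of an $\alpha$-redex (source $m(m(x_1,m(x_2,x_3)),x_4)$), whose closing square I would obtain by pasting together the pentagon $A$, the relation $L$, and naturality and invertibility of $2$-cells. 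For each branching in (a)--(c) the confluence is literally one of the stated coherence relations, read off at the relevant position, using that confluence of a branching is inherited under context extension. The relations $F$ and $F'$ do not arise from any critical branching and play no role here.

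The main obstacle is purely bookkeeping: making the enumeration of overlaps exhaustive — in particular not forgetting overlaps where one redex lies inside the rigid (non-variable) part of the other, such as case (d) — and, for each one, checking that the displayed coherence relation really has a source and target matching the peak after the appropriate substitution and context extension. This is the cartesian counterpart of the computation carried out in the monoidal setting by Guiraud and Malbos, adapting~\cite[Proposition~3.3.5]{guiraud2012coherence}, and of Lafont's completion~\cite[Figure~9]{lafont2003towards}; it is routine but must be done with care. Once every critical branching has been dispatched, \cref{lem:trs-cb-lc} yields that $\SMon'$ is locally confluent.
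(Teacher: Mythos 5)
Your proof follows the paper's argument exactly: reduce to critical branchings via \cref{lem:trs-cb-lc}, then match each branching against a generating coherence relation of $\SMon'$; your dictionary in cases (a)--(c) is the intended one, and your count of overlaps is complete. The one substantive point is your case (d): the overlap of a $\delta$-redex with the left factor of an $\alpha$-redex, with source $m(m(x_1,m(x_2,x_3)),x_4)$, is indeed a genuine critical pair that is \emph{not} matched by any generating relation of $\SMon'$ (the two $\alpha$--$\delta$ relations $K$ and $L$ cover only the root--root overlap and the overlap of $\alpha$ with the inner factor of $\delta$), whereas the paper's one-sentence proof claims that every critical pair involving $\gamma$ or $\delta$ is handled ``by definition''; your explicit treatment of this case is therefore a genuine improvement in rigor. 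Do be careful with how you close it, though: ``naturality'' of the structural $2$-cells is not available for free in a presented theory, so the commutation of the closing square --- with cofinal paths given by $\alpha$ on the inner factor followed by $\delta$ on one side, and $\alpha$ at the root followed by $\alpha$ on the inner factor on the other, both reaching $m(x_2,m(x_1,m(x_3,x_4)))$ --- must be derived from the generators. One derivation expands the left-factor instance of $\delta$ via $G'$ as $\alpha^-\pcomp\gamma\pcomp\alpha$, applies the pentagon $A$ twice, and uses the exchange law for rewriting steps at disjoint positions (which is what ``naturality'' amounts to here and does hold in the free $(2,1)$-category); the relation $L$ does not appear to be the right ingredient. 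With that step made precise, the argument is correct.
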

\begin{proof}
  By \cref{lem:trs-cb-lc}, it is enough to show that all the critical pairs are confluent, which holds by definition of $\SMon'$: the critical pairs involving $\alpha$, $\lambda$ and $\rho$ are handled in \cref{ex:rs-mon}, and those involving $\gamma$ or $\delta$ and another rewriting rule in the above definition of~$\SMon'$.
\end{proof}

\noindent
Again, analogous results in a monoidal (as opposed to cartesian) setting predate this work, see~\cite[Figure~51]{lafont2003towards} and \cite[Section~5.3.3]{guiraud-phd}.

The \TRS2 $\SMon'$ is not terminating (even when restricted to affine terms)
because of the rules $\gamma$ and~$\delta$ which witnesses for the commutativity
of the operation~$m$: for instance, we have the loop
\begin{equation}
  \label{gamma-loop}
  \begin{tikzcd}[arrows={Rightarrow},sep=large]
    m(x_1,x_2)\ar[r,"{\gamma(x_1,x_2)}"]&m(x_2,x_1)\ar[r,"{\gamma(x_2,x_1)}"]&m(x_1,x_2)
  \end{tikzcd}
\end{equation}
In order to circumvent this problem, we are going to formally ``remove'' the
second morphism above and only keep instances of~$\gamma$ (\resp $\delta$) which
tend to make variables in decreasing order. Namely, by the coherence
relation~$F$, \ie
\[
  \begin{tikzcd}[arrows={Rightarrow}]
    &m(x_2,x_1)\ar[d,Rightarrow,"F"{', xshift=-3pt},scaling nfold=3,shorten=6pt]\ar[dr,"{\gamma(x_2,x_1)}"]&\\
    m(x_1,x_2)\ar[ur,"{\gamma(x_1,x_2)}"]\ar[rr,equals]&{}&m(x_1,x_2)
  \end{tikzcd}
\]
we have $\gamma(x_2,x_1)=\gamma(x_1,x_2)^-$ so that $\gamma(x_2,x_1)$ is superfluous and we can remove it, by using Tietze transformations, without changing the presented $(2,1)$-category. Note that this operation is clearly not stable under substitution (for instance consider the substitution $\tuple{x_2,x_1}$ which exchanges the two variable names), so that this cannot actually be performed at the level of $(2,1)$-categories, but it can if we work within the hom-groupoids, which will be enough for our purposes.
If we remove all the rewriting steps involving $\gamma$ which tend to put variables in increasing order as explained above, we still have some loops such as
\[
  \begin{tikzcd}[arrows={Rightarrow}]
    m(e,x_1)\ar[r,"{\gamma(e,x_1)}"]&m(x_1,e)\ar[r,"{\gamma(x_1,e)}"]&m(e,x_1)
  \end{tikzcd}
\]
Intuitively, this is because the above rewriting path involves terms containing a unit~$e$, whereas our previous criterion relies on the order of variables. Fortunately, we can first remove all units by applying the rules~$\lambda$ and~$\rho$, and then apply the above argument.

Fix an arity $n\in\N$, and consider the \ARS2~$\P=\SMon'(n,1)$ as defined in \cref{hom-ars}. We write $\P'$ for the \ARS2 obtained from~$\P$ by
\begin{itemize}
\item removing from $\P_1$ the terms where the unit $e$ occurs, excepting~$e$ itself,
\item removing from $\P_2$ the rewriting steps whose source or target terms contain~$e$ (in particular, we remove all rewriting steps involving~$\lambda$
  or~$\rho$),
\item removing from~$\P_3$ the coherence relations where a removed step occurs in the source or the target.
\end{itemize}

\begin{lemma}
  \label{P-P'}
  The groupoid presented by~$\P'$ is equivalent to the one presented by~$\P$.
\end{lemma}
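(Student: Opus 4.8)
The plan is to recognise $\P'$ as the restriction of $\P=\SMon'(n,1)$ to the normal forms of its unit-eliminating subsystem, and then to apply the machinery of \cref{prop:ars-nf-pres,normal-subcat-equiv}. Concretely, let $U\subseteq\P_1$ be the set of rewriting steps of the form $C[\lambda\circ f]$ or $C[\rho\circ f]$. Since the only symbols are the binary $m$ and the nullary $e$, a term is a $U$-normal form precisely when no subterm has shape $m(e,-)$ or $m(-,e)$, that is, precisely when it contains no occurrence of $e$ or is the term $e$ itself; moreover $e$ is a normal form for the whole of $\SMon'$, and no rewriting step not involving $\lambda$ or $\rho$ has $e$ as source or target. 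Comparing with the description of $\P'$ given in the statement, one checks that $\P'=\P\setminus U$ in the notation preceding \cref{prop:ars-nf-pres}: its objects are the $U$-normal forms, its rewriting steps are those of $\P$ with $U$-normal source and target (hence none involving $\lambda$ or $\rho$), and its coherence relations are those of $\P$ whose source and target survive.

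Next I would verify the four hypotheses of \cref{prop:ars-nf-pres} for $\P$ and $U$. For \cref{ars-nf-pres-cv}, the interpretation of \cref{ex:mon-term} (namely $\intp{m(x_1,x_2)}=2x_1+x_2$ and $\intp{e}=1$) is strictly decreased by $\lambda$ and by $\rho$, so $\P$ is $U$-terminating; and the only critical $U$-branching is the one at $m(e,e)$, closed by the coherence relation $E$ (there is no $\lambda$–$\lambda$ or $\rho$–$\rho$ overlap because $e$ is a constant), so $\P$ is locally $U$-confluent and hence $U$-convergent by \cref{newman}. Condition \cref{ars-nf-pres-nt} holds because the only rules applicable to an $e$-free term are $\alpha$, $\gamma$ and $\delta$, and these merely rearrange the term tree without introducing any $e$. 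For \cref{ars-nf-pres-res,ars-nf-pres-res-coh} I would invoke \cref{ars-nf-pres-local} to reduce to pushing a single $\lambda$- or $\rho$-step past a rewriting step, resp.\ past a coherence relation: the relevant critical $U$-branchings are exactly those recorded by the coherence relations $B$, $C$, $D$, $E$, $I$, $J$, $P$, $Q$, $R$ of $\SMon'$, and in each of them the side that follows the non-unit rule is again built from $\lambda$/$\rho$ steps, which gives \cref{ars-nf-pres-res}; condition \cref{ars-nf-pres-res-coh} then follows from the fact — guaranteed by \cref{smon'-lc} together with the generating coherence relations of $\SMon'$ — that all the resulting confluences of confluences hold.

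Granting these, \cref{prop:ars-nf-pres} yields an isomorphism $\NF{\pgpd\P}\cong\pgpd{\P\setminus U}=\pgpd{\P'}$, where $\NF{\pgpd\P}$ is the full subcategory of $\pgpd\P$ on the $U$-normal forms; and since $\P$ is weakly $U$-normalizing, \cref{normal-subcat-equiv} gives that the inclusion $\NF{\pgpd\P}\hookrightarrow\pgpd\P$ is an equivalence. Composing, $\pgpd{\P'}$ is equivalent to $\pgpd\P$, which is the claim.

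I expect the main obstacle to be \cref{ars-nf-pres-res-coh}: checking that each coherence relation of $\SMon'$ remains coherent after a $\lambda$- or $\rho$-step is pushed simultaneously through its source and its target is a genuine (if finite) three-dimensional confluence computation, and it is precisely what forces the long list of auxiliary coherence relations in the definition of $\SMon'$. A minor point to be careful about is the identification $\P'=\P\setminus U$: the term $e$ is itself a $U$-normal form even though it "contains $e$", so one must note that $e$ is isolated in $\SMon'$ (no rewriting step into or out of it other than $\lambda$/$\rho$ steps, which are discarded anyway) for the two descriptions to agree literally.
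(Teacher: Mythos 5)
Your proposal is correct and follows essentially the same route as the paper: identify $\P'$ with $\P\setminus U$ for $U$ the $\lambda$/$\rho$-steps, verify the four hypotheses of \cref{prop:ars-nf-pres} (convergence via the interpretation and the single critical pair $E$, preservation of $e$-freeness, and the local closing conditions via \cref{ars-nf-pres-local} and the listed coherence relations), and compose the resulting isomorphism with the equivalence of \cref{normal-subcat-equiv}. Your enumeration of the rule-versus-$U$ critical pairs ($B$, $C$, $D$, $E$, $I$, $J$, $P$, $Q$, $R$) is in fact more complete than the paper's, and you correctly flag condition \cref{ars-nf-pres-res-coh} as the remaining finite computation, which is exactly the step the paper illustrates with the $\lambda$-versus-$A$ example.
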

\begin{proof}
  We write $W\subseteq\P_1$ for the set of rewriting steps involving~$\lambda$
  or~$\rho$.
  By \cref{smon'-lc}, the \ARS2 $\P'$ is locally $W$-confluent, and thus, by \cref{normal-subcat-equiv}, $\pgpd\P$ is equivalent to $N(\pgpd\P)$, the full subcategory on $W$-normal forms. In turn, by \cref{prop:ars-nf-pres} (see below for details), the category $N(\pgpd\P)$ is isomorphic to the groupoid presented by~$\P\setminus W$ and we conclude by observing that the \ARS2 $\P'$ is precisely~$\P\setminus W$ (terms in normal form are precisely those where $e$ does not occur, with the exception of $e$ itself).

  Let us explain why the conditions of \cref{prop:ars-nf-pres} are satisfied.
  \begin{enumerate}
  \item We have seen above that $\P$ is $W$-convergent.
  \item It is immediate to check that no rewriting rule can produce a term containing~$e$ from a term which does not have this property.
  \item From \cref{ars-nf-pres-local}, in order to show this condition, we have to check that every diagram of the form
    \[
      \begin{tikzcd}
        t\ar[d,"\omega"']\ar[r,"\alpha"]&u\\
        t'
      \end{tikzcd}
    \]
    can be closed as in \cref{eq:ars-nf-pres-res} for arbitrary rewriting steps $\alpha\in\P_1$ and $\omega\in W$. It is enough to show this when they form a critical pair. There are five of them, which correspond to the five coherence relations~$B$, $C$, $D$, $I$ and~$J$, from which we conclude.
  \item From \cref{ars-nf-pres-local}, in order to show this condition, we have to check that every diagram of the form
    \[
      \begin{tikzcd}
        t\ar[d,"\omega"']\ar[r,bend left,"\alpha",""{name=alpha}]\ar[r,bend right,"\beta"',""'{name=beta}]&u\\
        t'
        \ar[from=alpha,to=beta,Rightarrow,"X"',shorten=4pt]
      \end{tikzcd}
    \]
    can be closed as in \cref{eq:ars-nf-pres-res-coh} for $\omega$ in~$W$. Again, it is enough to show this in situations which are not orthogonal and minimal, in a similar sense as for critical pairs,
    which we call a ``critical pair between a rewriting rule and a coherence relation''. For instance, one such critical pair between~$\lambda$ and~$A$ can be closed as follows:
    \[
      \begin{tikzcd}
        m(m(m(e,x_2),x_3),x_4)\ar[dd,bend right,dotted]\ar[d,"\alpha"'{name=a1}]\ar[r,"\alpha"]&\ar[dd,bend right,dotted]m(m(e,m(x_2,x_3)),x_4)\ar[r,"\alpha"]&m(e,m(m(x_2,x_3),x_4))\ar[dd,bend left,dotted]\ar[d,"\alpha"{name=a2}]\\
        m(m(e,x_2),m(x_3,x_4))\ar[dd,bend right,dotted]\ar[rr,"\alpha"']&&m(e,m(x_2,m(x_3,x_4)))\ar[dd,bend left,dotted]\\
        m(m(x_2,x_3),x_4)\ar[d,"\alpha"'{name=a3}]\ar[r,equals]&m(m(x_2,x_3),x_4)\ar[r,equals]&m(m(x_2,x_3),x_4)\ar[d,"\alpha"{name=a4}]\\
        m(x_2,m(x_3,x_4))\ar[rr,equals]&&m(x_2,m(x_3,x_4))
        \ar[from=a1,to=a2,Rightarrow,"A",shorten=120pt]
        \ar[from=a3,to=a4,equals,shorten=120pt]
      \end{tikzcd}
    \]
    where the vertical dotted arrows are the obvious rewriting steps involving~$\lambda$ (this is the only critical pair between~$\lambda$ and~$A$ and there are three critical pairs between $\rho$ and~$A$).\qedhere
  \end{enumerate}
\end{proof}

\begin{lemma}
  \label{P'-lc}
  The \ARS2~$\P'$ is locally confluent.
\end{lemma}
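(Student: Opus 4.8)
The plan is to proceed as in \cref{smon'-lc}: reduce local confluence to the confluence of critical branchings (the argument recalled in \cref{lem:trs-cb-lc} applies verbatim to the \ARS2~$\P'$, together with the standard treatment of orthogonal branchings), and then to check that every critical branching of~$\P'$ is one of~$\SMon'$ whose chosen confluence does not leave~$\P'$.

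First I would determine which rewriting steps actually survive in~$\P'$. By construction $\P'$ keeps only the steps of~$\SMon'(n,1)$ whose source and target are free of occurrences of~$e$. Since both the source and the target of~$\lambda$ and of~$\rho$ mention~$e$, no instance of these two rules remains, while $\alpha$, $\gamma$ and~$\delta$ — whose left- and right-hand sides involve only~$m$ and variables — survive in every context acting on an $e$-free term; moreover, as already observed in the proof of \cref{P-P'}, none of $\alpha$, $\gamma$, $\delta$ can create an occurrence of~$e$, so every rewriting path issued from an $e$-free term, and hence every coherence relation of~$\SMon'$ built only from these rules, stays inside~$\P'$. Consequently the critical branchings of~$\P'$ are exactly the critical branchings of~$\SMon'$ whose two steps use rules among~$\alpha$, $\gamma$, $\delta$; as the left-hand sides of these rules and all their superpositions are built only from~$m$ and variables, such critical branchings have $e$-free source and are precisely those closed, in~$\SMon'$, by a coherence relation among~$A$, $G'$, $H$, $K$, $L$, $M$, $N$, $O$ (the remaining critical branchings of~$\SMon'$ are closed by~$B$, $C$, $D$, $E$, $I$, $J$, $P$, $Q$, $R$, each of which involves~$\lambda$ or~$\rho$, hence an occurrence of~$e$).

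Each of $A$, $G'$, $H$, $K$, $L$, $M$, $N$, $O$ has $e$-free source and target built only from~$\alpha$, $\gamma$, $\delta$, so it still belongs to~$\P'_3$, and each critical branching of~$\P'$ is therefore confluent within~$\P'$; combined with the orthogonal case this gives local confluence. The one step that must be carried out with care is the bookkeeping: one has to inspect the coherence relations of~$\SMon'$ displayed before \cref{smon-completion} to confirm that the confluence diagrams for the critical branchings between~$\alpha$, $\gamma$ and~$\delta$ genuinely avoid~$e$, $\lambda$ and~$\rho$, and that deleting the $e$-containing part of~$\P$ leaves each of these diagrams intact. This is the only real obstacle; everything else is the routine critical-pair bookkeeping already carried out for~$\SMon'$ in \cref{smon'-lc}.
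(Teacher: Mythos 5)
Your proof is correct and rests on the same key observation as the paper's: the rules surviving in~$\P'$ (namely $\alpha$, $\gamma$ and $\delta$) cannot create an occurrence of~$e$, so the confluence diagram of any local branching with $e$-free source remains inside~$\P'$. The paper reaches the conclusion slightly more economically by restricting the already-established local confluence of~$\P$ (\cref{smon'-lc}) to $e$-free sources rather than re-enumerating the critical pairs and the coherence relations $A$, $G'$, $H$, $K$, $L$, $M$, $N$, $O$ that close them, but the content of the argument is the same.
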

\begin{proof}
  We can deduce local confluence of~$\P'$ from the one of~$\P$: given a local
  pair with~$t$ as source, it is confluent in~$\P$ by \cref{smon'-lc} and
  thus in~$\P'$. Namely, since~$t$ lies in~$\P'$, the whole diagram does by
  property \cref{ars-nf-pres-nt} shown in the proof of \cref{P-P'} above.
\end{proof}

\noindent
Given a term~$t$, we write $\vars t$ the list of variables occurring in it, from
left to right, \eg $\vars{m(m(x_2,e),x_1)}=x_2x_1$. We order variables by
$x_i\succeq x_j$ whenever $i\leq j$ and extend it to lists of variables by
lexicographic ordering. Given terms $t$ and $u$, we write $t\succeq u$ when
$\vars t$ is greater than~$u$ according to the preceding order.

\begin{lemma}
  \label{affine-wf}
  The preorder $\succeq$ is well-founded on affine terms with fixed arity.
\end{lemma}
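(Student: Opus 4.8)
The plan is to factor the preorder $\succeq$ through the finite set of possible variable lists. Fix the arity $n$. Since a term is affine exactly when no variable occurs more than once, for an affine term $t$ of arity $n$ the list $\vars t$ is an \emph{injective word} over the $n$-element alphabet $\set{x_1,\ldots,x_n}$: a finite sequence of pairwise distinct variables, possibly empty (as for $t = e$). I would write $L_n$ for the set of all such words and note that it is finite, with $\sum_{k=0}^{n} n!/(n-k)!$ elements.

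Next I would observe that the order on variables ($x_i \succeq x_j$ iff $i \leq j$) is a total order on the alphabet, so its lexicographic extension is a total order on $L_n$. On the \emph{finite} set $L_n$ this total order is a well-order, hence admits no infinite strictly decreasing chain. (Should one prefer not to fix a convention for comparing words of unequal length, it suffices that the strict lexicographic order is transitive and irreflexive, so that on a finite set it is acyclic and therefore well-founded.)

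Finally, by definition $t \succeq u$ holds precisely when $\vars t \succeq \vars u$ in $L_n$. Hence an infinite sequence $t_1 \succ t_2 \succ \cdots$ of affine terms of arity $n$ would yield an infinite strictly decreasing sequence $\vars{t_1} \succ \vars{t_2} \succ \cdots$ in $L_n$, contradicting the previous paragraph, so $\succeq$ is well-founded on affine terms of arity $n$. I do not anticipate any real obstacle: the only subtleties are that distinct terms can have the same variable list (so $\succeq$ is a genuine preorder on terms, not a partial order, which is harmless), and that although there are infinitely many affine terms of a given arity they collapse to finitely many words — which is exactly what makes the argument work, and also why the "fixed arity" hypothesis is essential, since over varying arities the set of variable lists is no longer finite.
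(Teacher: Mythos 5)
Your proof is correct and follows essentially the same route as the paper's: an infinite strictly decreasing chain of affine terms of fixed arity would project to an infinite strictly decreasing chain of variable lists, which is impossible because affineness and the fixed arity leave only finitely many such lists. The extra care you take about injective words and the lexicographic comparison of unequal-length words is a harmless elaboration of the same one-line argument.
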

\begin{proof}
  Any infinite decreasing sequence $t_1\succ t_2\succ\ldots$ of terms, would
  induce an infinite decreasing sequence $\vars t_1\succ\vars t_2\succ\ldots$ of
  lists of variables, but there is only a finite number of those since we
  consider affine terms (so that there are no repetitions of variables) of fixed
  arity (so that there is a finite number of variables).
\end{proof}


%
A rewriting step $\rho:t\To u$ in $\P_1'$ is \emph{decreasing} when $t\succ u$.
We write~$\P''$ for the \ARS2 obtained from~$\P'$ by
\begin{itemize}
\item removing from~$\P_1'$ all the rewriting steps of the form
  \[
    C[\gamma(t_1,t_2)]
    :
    C[m(t_1,t_2)]
    \To
    C[m(t_2,t_1)]
  \]
  which are not decreasing,
\item replacing in the source or target of a relation in~$\P_2'$ all the
  non-decreasing steps $C[\gamma(t_1,t_2)]$ by $C[\gamma(t_2,t_1)^-]$.
\end{itemize}

\begin{lemma}
  \label{P'-P''}
  The \ARS2 $\P'$ and $\P''$ present isomorphic groupoids.
\end{lemma}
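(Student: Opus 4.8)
The plan is to construct directly a mutually-inverse pair of functors between $\pgpd{\P'}$ and $\pgpd{\P''}$, rather than arguing via Tietze transformations: passing from $\P'$ to $\P''$ amounts to performing the rule-removal move of \cref{tietze-remove-rule} simultaneously on all the non-decreasing instances of~$\gamma$, and since for a fixed arity there are infinitely many such steps this cannot be realized by a finite sequence of Tietze moves.

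First I would record the key observation. Given a non-decreasing rewriting step $a = C[\gamma(t_1,t_2)] : C[m(t_1,t_2)] \To C[m(t_2,t_1)]$ of~$\P'$, the opposite step $b = C[\gamma(t_2,t_1)] : C[m(t_2,t_1)] \To C[m(t_1,t_2)]$ is again a step of~$\P'$, with the same endpoints as~$a$ in the opposite order; since the terms are affine, $\vars{t_1}$ and $\vars{t_2}$ are disjoint and nonempty, so exactly one of $a$, $b$ is decreasing, whence $b$ is decreasing and is a step of~$\P''$. Moreover, the instance $C[F\circ\tuple{t_1,t_2}]$ of the coherence relation~$F$ of~$\SMon'$ is a coherence relation of~$\P'$ of type $a\pcomp b\To\id_{C[m(t_1,t_2)]}$, so that $\pgpd{a\pcomp b}=\id_{C[m(t_1,t_2)]}$, i.e.\ $\pgpd a=\pgpd{b^-}$, in~$\pgpd{\P'}$. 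Let $\sigma$ denote the operation on rewriting zig-zags of~$\P'$ which leaves every step unchanged except that it replaces each occurrence of a non-decreasing $\gamma$-step~$a$ by the zig-zag~$b^-$ (which has the same endpoints): this operation is compatible with concatenation and inversion, it sends every zig-zag of~$\P'$ to a zig-zag of~$\P''$, and by the displayed identity $\pgpd{\sigma(\pi)}=\pgpd\pi$ in~$\pgpd{\P'}$ for every zig-zag~$\pi$. By construction, $\sigma$ is precisely the operation that turns a coherence relation of~$\P'_2$ into the corresponding coherence relation of~$\P''_2$.

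From this I would define two functors. First, the inclusion $G:\pgpd{\P''}\to\pgpd{\P'}$ which is the identity on objects and sends each rewriting step of~$\P''$ (a step of~$\P'$) to itself; it respects $\cohto$ because each coherence relation of~$\P''_2$ has the form $\sigma(p')\To\sigma(q')$ for some coherence relation $p'\To q'$ of~$\P'_2$, and its source and target become equal in~$\pgpd{\P'}$ by the identity above. Second, the functor $H:\pgpd{\P'}\to\pgpd{\P''}$ induced by~$\sigma$ on zig-zags; it respects $\cohto$ because $\sigma$ carries each coherence relation of~$\P'_2$ to the corresponding one of~$\P''_2$. Then $H\circ G$ is the identity of~$\pgpd{\P''}$, since $\sigma$ fixes every generating step of~$\P''$ (none is a non-decreasing $\gamma$-step); and $G\circ H$ is the identity of~$\pgpd{\P'}$, since $G(H(\pgpd\pi))=\pgpd{\sigma(\pi)}=\pgpd\pi$ for every zig-zag~$\pi$ of~$\P'$. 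Hence $\pgpd{\P'}$ and $\pgpd{\P''}$ are isomorphic.

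The main obstacle — or rather the only point requiring care — is to check that $\P''$ is a well-formed \ARS2, equivalently that $\sigma$ takes values in~$\freegpd{\P''_1}$: this is exactly the statement that the opposite $b$ of a non-decreasing $\gamma$-step $a$ is decreasing, which uses affineness of the terms through $\vars{t_1}\ne\vars{t_2}$, together with the relation~$F$ witnessing that $a$ and $b^-$ become equal in the presented groupoid. The remaining verifications (functoriality of $G$ and $H$, and that they are mutually inverse) are routine.
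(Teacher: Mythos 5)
Your route is genuinely different from the paper's. The paper disposes of this lemma in one line by invoking \cref{tietze-remove-rule} on the instances $C[F\circ f]:a\pcomp b\To\id$ of the relation~$F$ (there $a$ occurs once in the source and not in the target, and the prescribed replacement $p_1^-\pcomp q\pcomp p_2^-$ is exactly your $b^-$), and then concludes by \cref{tietze-correct}. Your objection that this requires removing infinitely many rules, whereas Tietze equivalence as defined is generated by single moves and hence only reaches finite chains, is a fair criticism of that argument as literally stated; your direct construction of the substitution~$\sigma$ and of the mutually inverse functors $G$ and $H$ is essentially the honest unfolding of what the simultaneous removal has to mean, and all the verifications you list (compatibility of~$\sigma$ with concatenation, descent to the quotient by~$\Cohto$ on both sides, $H\circ G=\id$ because $\sigma$ fixes $\P''$-steps, $G\circ H=\id$ because $\pgpd{\sigma(\pi)}=\pgpd\pi$) go through. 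So as a proof strategy this is sound and arguably more robust than the paper's.

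There is, however, a gap in the step you yourself single out as the only delicate one. You assert that the terms are affine, so that $\vars{t_1}$ and $\vars{t_2}$ are disjoint and exactly one of $a$, $b$ is decreasing. But $\P'$ is not restricted to affine terms: it is obtained from $\SMon'(n,1)$ by removing only the terms containing~$e$, so it still contains, e.g., $m(x_1,x_1)$, and the step $\gamma(x_1,x_1):m(x_1,x_1)\To m(x_1,x_1)$ is non-decreasing while being equal to its own ``opposite''. Both copies are therefore removed in~$\P''$, and $\sigma$ sends this step to the inverse of a removed step, so $H$ is not well defined (indeed $\P''$ itself, as defined, has coherence relations whose source or target is not a zig-zag over~$\P''_1$). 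To be clear, this defect is inherited from the paper — the same degenerate instances break the hypothesis of \cref{tietze-remove-rule}, since $a$ then occurs twice in the source of~$F$ — and it is harmless for the downstream use, which only ever looks at affine terms (\cref{affine-coh,smon-affine-coh}). But your proof, like the paper's, needs either to restrict $\P'$ and $\P''$ to affine terms from the outset (in which case, note, there are only finitely many steps of each arity and the Tietze route would also work verbatim), or to treat separately the non-decreasing $\gamma$-steps whose opposite is also non-decreasing.
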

\begin{proof}
  This is a direct application of \cref{tietze-remove-rule}, because $\P_3'$
  contains the coherence rules
  \[
    \begin{tikzcd}[row sep=small]
      &C[m(t_2,t_1)\circ f]\ar[dr,bend left=10,"{C[\gamma(t_2,t_1)\circ f]}"]\ar[phantom,""{name=top}]\\
      C[m(t_1,t_2)\circ f]\ar[ur,bend left=10,"{C[\gamma(t_1,t_2)\circ f]}"]\ar[rr,equals,bend right=10,""'{name=eq}]&{}&C[m(t_1,t_2)\circ f]
      \ar[from=top,to=eq,Rightarrow,"{C[F\circ f]}"',shorten=10pt]
    \end{tikzcd}
  \]
  which allow us to conclude.
\end{proof}

In the following, for a \ARS2 whose objects are terms such as $\P''$, we say that it is ``terminating on affine terms'' when there is no infinite sequence of rewriting steps $t_0\to t_1\to\ldots$ where the term $t_0$ is affine. Note that all the rules of~$\P''$ rewrite affine terms into affine terms, so that all the $t_i$ are also necessarily affine in such a sequence of rewriting steps.

\begin{lemma}
  \label{P''-confl}
  The \ARS{2}~$\P''$ is terminating on affine terms and locally confluent.
\end{lemma}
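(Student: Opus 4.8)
The plan is to prove the two assertions separately, in each case bootstrapping from what is already known about $\P'$.

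\emph{Termination on affine terms.} First I would note that every rule of $\P''$ sends an affine term to an affine term, so along any rewriting sequence issued from an affine term every term stays affine and carries the same (finite) multiset of variables. I would then use a lexicographic combination of two well-founded measures. The primary component is the preorder $\succeq$ of \cref{affine-wf}, comparing the lists of variables read from left to right; the secondary component is the weight associated to a standard context-compatible interpretation, for instance the one of \cref{ex:mon-term} with $\intp{m(x_1,x_2)}=2x_1+x_2$, which is strictly decreased by every $\alpha$-step. The point is that an $\alpha$-step leaves the left-to-right list of variables unchanged while strictly decreasing the weight, whereas every $\gamma$- or $\delta$-step retained in $\P''$ is, by the very definition of $\P''$, \emph{decreasing}, i.e.\ strictly decreases the primary component. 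Hence the lexicographic measure strictly decreases at each $\P''$-step, and as it is well-founded on affine terms of a fixed arity, $\P''$ admits no infinite rewriting sequence starting from an affine term.

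\emph{Local confluence: reduction to critical branchings.} Every rewriting step of $\P''$ is a rewriting step of $\P'$, so a local $\P''$-branching is a local $\P'$-branching and thus confluent in $\P'$ by \cref{P'-lc}. Orthogonal branchings are confluent and enlarging the outer context of a branching preserves confluence; moreover enlarging the outer context preserves the property of a commutation step being decreasing, since this depends only on the two subterms being swapped. So I would reduce to the minimal non-orthogonal local $\P''$-branchings. Unlike for a genuine term rewriting system, \cref{lem:trs-cb-lc} cannot be applied directly here, because substituting into a decreasing commutation step may turn it into a non-decreasing one; consequently the relevant critical branchings cannot be taken to have pattern variables at the overlap, and instead form finitely many families indexed by the $\succeq$-orderings of the leftmost variables of the subterms occurring at the overlap. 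The $\alpha/\alpha$ family closes as for $\Mon$, using the restriction of $A$; and the overlaps of $\alpha$ with $\gamma$ at a left child, of $\delta$ with $\alpha$, and the $\delta/\delta$ and $\gamma/\delta$ overlaps, close using (restrictions of) the relations $G'$, $K$, $L$, $M$, $N$, $O$, $F'$ of $\SMon'$, whose closing commutation steps carry exactly the same ``decreasing'' side-condition as the branching step and hence already lie in $\P''$.

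\emph{The main obstacle.} The hard case is the overlap of $\alpha$ with $\gamma$ at the root, closed in $\SMon'$ by the relation $H$: its closing path from $m(t_1,m(t_2,t_3))$ to $m(t_3,m(t_1,t_2))$ uses a $\gamma$-step swapping $t_2$ and $t_3$, which need not be decreasing even when the root $\gamma$-step of the branching is. For such instances I would reorganize the closing using $\delta$-steps instead: the ordering of the leftmost variables of $t_1,t_2,t_3$ forced by the branching step being decreasing makes the relevant $\delta$-steps decreasing, so one can drive both members of the branching downward toward their common $\P''$-normal form within $\P''$. The resulting square commutes in $\pgpd{\P''}$ because $\pgpd{\P''}\cong\pgpd{\P'}$ by \cref{P'-P''} and the corresponding square already commutes in $\pgpd{\P'}$ by \cref{P'-lc}, the only change being that sub-paths running through non-decreasing commutation steps are replaced by their reversed, decreasing counterparts, which is legitimate up to $\cohto$ by the self-inverse relations $F$ and $F'$. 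I expect the bulk of the work to lie precisely in this finite but somewhat tedious case analysis, making sure that every instance of the $\alpha$-at-root-against-commutation overlaps really does re-close inside $\P''$ with a commuting coherence square. Granting it, $\P''$ is locally confluent, and combined with the termination just established it is in fact convergent on affine terms by \cref{prop:lc+n-c2}.
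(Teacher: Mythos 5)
Your termination argument is exactly the paper's: the lexicographic product of the order $\succ$ of \cref{affine-wf} with the interpretation of \cref{mon-term}, where the retained commutation steps strictly decrease the first component and the $\alpha$-steps preserve it while decreasing the second. That half is fine.

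On local confluence you are right to distrust the claim that the closings inherited from $\SMon'$ consist only of decreasing commutation steps, and your counterexample for $H$ is genuine: with $t_1=x_0$, $t_2=x_2$, $t_3=x_1$ the root $\gamma$ of the branching is decreasing but the step $m(t_1,\gamma(t_2,t_3))$ in the closing of $H$ is not. (The paper's own proof is a one-liner here and elides exactly this point.) Two things in your repair still need attention, however. First, $H$ is not the only culprit: the closings of $K$, $L$ and $M$ have the same defect. For $M$, say, the branching between the root $\delta$ and the root $\gamma$ on $m(t_1,m(t_2,t_3))$ only forces the leading variable of $t_1$ to be $\succ$-above that of $t_2$, while the closing uses an inner $\gamma$ swapping $t_1$ and $t_3$, which is decreasing only when the leading variable of $t_1$ is $\succ$-above that of $t_3$: take $t_1=x_1$, $t_2=x_2$, $t_3=x_0$. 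So the case analysis is larger than you indicate (only $N$ and $O$ close for free, because their branching hypotheses compose transitively). Second, the justification you give for the commutativity of the re-closed square does not go through as stated: \cref{P'-lc} only gives local confluence of $\P'$, and since $\P'$ is not terminating this yields no statement about equality of parallel zig-zags in $\pgpd{\P'}$; moreover, replacing a non-decreasing forward step by its reversed decreasing counterpart via $F$ or $F'$ produces a zig-zag, not the cospan of forward $\P''$-paths with a $\cohto$-filling that the definition of a confluent branching demands. What is actually needed, for each problematic family, is an explicit pair of forward $\P''$-paths to a common reduct together with a derivation of their $\cohto$-equality from the generating relations $F$, $F'$, $H$, $K$, $L$, $M$, $N$, $O$; this is doable but it is precisely the content that your sketch (and, to be fair, the paper's proof) leaves implicit.
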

\begin{proof}
  %
  In order to show termination, we can take the lexicographic product of the
  orders~$\succeq$ of \cref{affine-wf} and the one of \cref{mon-term}. This
  order is well-founded as a lexicographic product of well-founded orders. The
  rewriting steps involving $\gamma$ are strictly decreasing \wrt $\succeq$, by
  definition of $\P''$. The rewriting steps involving~$\alpha$ are left
  invariant by~$\succeq$ but are strictly decreasing \wrt the second order. We
  deduce that~$\P''$ is terminating.

  We have seen in \cref{P'-lc} that $\P'$ is locally confluent. We thus have
  that $\P''$ is also locally confluent because the confluence diagrams
  involving $\gamma$ (namely $G$, $H$, $I$ and $J$) only require decreasing
  instances of rewriting rules involving $\gamma$.
\end{proof}

\noindent
As a direct consequence, we have:

\begin{lemma}
  \label{affine-coh}
  Given two rewriting zig-zags $p,q:t\zzto u$ in $\P''$ such that $t$ is affine, we have that $u$ is also affine and $p\cohto q$.
\end{lemma}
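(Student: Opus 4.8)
The plan is to restrict $\P''$ to affine terms and then invoke the abstract Squier theorem of \cref{sec:ars}. Write $\Q$ for the full sub-\ARS{2} of $\P''$ whose objects are the affine terms. Every rewriting rule surviving in $\P''$ --- namely the instances of $\alpha$ and $\delta$, and the decreasing instances of $\gamma$, the rules $\lambda$ and $\rho$ having been discarded when passing from $\P$ to $\P'$ --- is linear, \ie variable-preserving, so a rewriting step of $\P''$ has an affine source if and only if it has an affine target. Consequently $\Q$ is genuinely a sub-\ARS{2}, and any zig-zag of $\P''$ whose source is affine lies entirely within affine terms; in particular its target is affine, which already gives the first assertion about~$u$.

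Next I would show that $\Q$ is convergent. Termination of $\Q$ is exactly the statement, proved in \cref{P''-confl}, that $\P''$ is terminating on affine terms. For local confluence I would argue as in \cref{P'-lc}: a local branching of $\Q$ has an affine source~$t$, hence by \cref{P''-confl} it is confluent in~$\P''$; every term appearing in the chosen confluence diagram is reachable from~$t$ and hence affine, and every intermediate zig-zag occurring in the witnessing $\cohto$-equivalence has affine endpoints and hence, again by linearity of the rules, is itself affine, so the whole confluence diagram together with its $\cohto$-witness already lives in~$\Q$. By Newman's lemma (\cref{prop:lc+n-c}), $\Q$ is confluent, hence $W$-convergent for $W=\Q_1$, and therefore weakly $W$-normalizing by \cref{lem:term-wn}.

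Finally I would apply \cref{prop:ars-coh}: a weakly $W$-normalizing and $W$-confluent \ARS{2} is $W$-coherent, so any two parallel zig-zags $p,q\colon t\zzto u$ in $\freegpd\Q_1$ satisfy $p\cohto q$ in $\pgpd\Q$. Since $t$ is affine, the zig-zags $p$ and $q$ of the statement are exactly zig-zags of this kind, and since the coherence relations of $\Q$ form a subset of those of $\P''$, the relation $p\cohto q$ holds a fortiori in $\pgpd{\P''}$. The only genuinely delicate point is the bookkeeping of the second paragraph --- that passing to the affine fragment preserves local confluence and keeps all the closing coherence cells available --- and this works precisely because every rule of $\P''$ is variable-preserving.
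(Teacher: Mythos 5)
Your proof is correct and follows exactly the route the paper takes: restrict $\P''$ to affine terms, use \cref{P''-confl} for termination and local confluence there, then conclude by Newman's lemma (\cref{prop:lc+n-c}) and the abstract Squier theorem (\cref{prop:ars-coh}). The paper states this in one line, leaving implicit the bookkeeping you carry out explicitly (that the linearity of $\alpha$, $\gamma$, $\delta$ makes the affine fragment closed under rewriting in both directions, so the restriction is well-defined and coherence transfers back to $\pgpd{\P''}$), which is a welcome clarification rather than a deviation.
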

\begin{proof}
  By \cref{P''-confl}, the restriction of $\P''$ to affine terms is terminating and locally confluent, thus confluent by \cref{newman} and thus coherent by \cref{prop:ars-cr}.
\end{proof}

\noindent
From the properties shown in \cref{ars-rewriting}, we deduce that $\P''$ is coherent, which implies the following:

\begin{lemma}
  \label{smon-affine-coh}
  Given two terms $t,u:n\to 1$, with $t$ affine, there is at most one 2-cell $t\To u$ in $\pgpd\SMon$.
\end{lemma}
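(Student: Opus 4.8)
The plan is to transport \cref{affine-coh}, a coherence statement about the rewriting system $\P''$, back along the chain of equivalences connecting $\P''$ to $\pgpd\SMon$; once everything is lined up, the claim is immediate.

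First I would reduce the problem to a statement about the hom-groupoid $\pgpd\P$, where $\P=\SMon'(n,1)$ is the rewriting system of \cref{hom-ars}. By \cref{smon-completion}, the $(2,1)$-categories $\pgpd\SMon$ and $\pgpd\SMon'$ are isomorphic; since this isomorphism is produced by Tietze transformations, which never alter the signature, it is the identity on $0$- and $1$-cells, and hence restricts to an isomorphism of hom-groupoids $\pgpd\SMon(n,1)\cong\pgpd\SMon'(n,1)$ that is the identity on objects. Unfolding the definition of the presented $2$-theory (any $1$-cell occurring in a zig-zag between $1$-cells of arity $n$ is itself of arity $n$), we have $\pgpd\SMon'(n,1)=\pgpd\P$. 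It therefore suffices to prove that, whenever $t$ is affine, $\pgpd\P$ has at most one morphism from $t$ to any term $u$.

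Next I would assemble the equivalence $E\colon\pgpd{\P''}\to\pgpd\P$ and pin down how it acts on objects. Writing $W$ for the set of rewriting steps of $\P$ using $\lambda$ or $\rho$, \cref{P-P',P'-P''} combine to give such an $E$, and tracing the constructions it is the composite of the identity-on-objects isomorphism $\pgpd{\P''}\cong\pgpd{\P'}$ of \cref{P'-P''} (a Tietze-type transformation, which leaves objects untouched) with the equivalence $\pgpd{\P'}\to\pgpd\P$ of \cref{P-P'}, which in turn factors as an identity-on-objects isomorphism $\pgpd{\P'}\cong\NF{\pgpd\P}$ followed by the inclusion $\NF{\pgpd\P}\hookrightarrow\pgpd\P$ of the full subgroupoid on $W$-normal forms. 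So, up to this identification of objects, $E$ is just the inclusion of the full subgroupoid of $\pgpd\P$ on $W$-normal forms: it is fully faithful, and its objects are exactly the $W$-normal forms, which by inspection of the rules of $\SMon'$ are precisely the $n$-ary terms containing no occurrence of $e$ (together with $e$ itself when $n=0$).

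Finally I would combine these ingredients. Given $t$ affine and $u$ arbitrary, since $\P$ is $W$-convergent (established inside the proof of \cref{P-P'}) it is weakly $W$-normalizing by \cref{lem:term-wn}, so one can pick $W$-normalization paths $n_t\colon t\pathto\nf t$ and $n_u\colon u\pathto\nf u$. The rules $\lambda,\rho$ neither introduce nor duplicate variables, so $\nf t$ is still affine; and $\nf t,\nf u$, being $W$-normal, are objects of $\P''$. As $\pgpd\P$ is a groupoid the classes $\pgpd{n_t},\pgpd{n_u}$ are isomorphisms, so pre- and post-composition with them gives a bijection $\pgpd\P(t,u)\cong\pgpd\P(\nf t,\nf u)$, which by full faithfulness of $E$ is in bijection with $\pgpd{\P''}(\nf t,\nf u)$; and each element of the latter being the $\cohto$-class of a zig-zag $\nf t\zzto\nf u$ in $\P''$, \cref{affine-coh} (applicable since $\nf t$ is affine) shows that any two such zig-zags are $\cohto$-equivalent, so this hom-set has at most one element. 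Chaining the bijections yields at most one $2$-cell $t\To u$ in $\pgpd\SMon$. I expect the main obstacle to be exactly the bookkeeping of the third paragraph: one must check carefully that $E$ is fully faithful and identity-on-objects (and that normalization keeps affine terms affine and produces unit-free terms), so that the affine coherence of $\P''$ genuinely translates into a statement about $2$-cells of $\pgpd\SMon$; everything else is routine.
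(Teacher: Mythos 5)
Your proof is correct and follows essentially the same route as the paper: it chains the identifications given by \cref{smon-completion}, \cref{P-P'}, \cref{P'-P''} and then applies \cref{affine-coh}. The extra bookkeeping in your third paragraph (passing to the $W$-normal forms $\nf t,\nf u$ and checking that normalization preserves affineness) is a welcome precision that the paper's terse chain of isomorphisms leaves implicit, since $t$ and $u$ need not themselves be objects of $\P'$.
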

\begin{proof}
  We write $\P=\SMon'(n,1)$. Given two rewriting zig-zags $p,q:t\Zzto u$, we have
  \begin{align*}
    \pgpd{\SMon}(p,q)
    &\isoto\pgpd\P(p,q)&&\text{by \cref{smon-completion}}\\
    &\isoto\pgpd\P'(p,q)&&\text{by \cref{P-P'},}\\
    &\isoto\pgpd\P''(p,q)&&\text{by \cref{P'-P''},}\\
    &\isoto 1&&\text{by \cref{affine-coh},}
  \end{align*}
  from which we conclude.
\end{proof}

\noindent
Finally, we thus conclude to the following coherence theorem~(S1):

\begin{theorem}
  \label{smoncat-strong-coh}
  \label{smc-S1}
  In a symmetric monoidal category, every diagram whose $0$-source is a tensor product of distinct objects, and whose morphisms are composites and tensor products of structural morphisms, commutes.
\end{theorem}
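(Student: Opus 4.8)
The theorem is the algebraic-structure counterpart of \cref{smon-affine-coh}, which already says that for affine terms $t$ and arbitrary $u$ there is at most one $2$-cell $t\To u$ in $\pgpd\SMon$. So the first step is purely translational: recall from \cref{smoncat-alg} that $\Alg(\pgpd\SMon)\cong\SMonCat$, so that a symmetric monoidal category $C$ is the same thing as a product-preserving $2$-functor $\pgpd\SMon\to\Cat$. A diagram in $C$ whose $0$-source is a tensor product $x_1\otimes\cdots\otimes x_n$ of \emph{distinct} objects and whose edges are composites and tensors of structural morphisms is, by this correspondence, the image under such a $2$-functor of a diagram of $2$-cells in $\pgpd\SMon$ between $1$-cells $n\to 1$. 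The hypothesis that the $x_i$ are pairwise distinct is exactly what lets us choose the source $1$-cell to be an \emph{affine} term: an iterated tensor of $n$ distinct variables corresponds to an affine term $t:n\to 1$ (no variable repeated), and the target $1$-cell $u:n\to 1$ is whatever term the target object names.

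**Carrying it out.** Given two structural morphisms $f,g$ in $C$ with common source $x_1\otimes\cdots\otimes x_n$ and common target, I would lift them to rewriting zig-zags $p,q:t\Zzto u$ in $\SMon$ with $t$ affine, representing $2$-cells $\pgpd p,\pgpd q:t\To u$ in $\pgpd\SMon$. By \cref{smon-affine-coh} we have $\pgpd p=\pgpd q$ in $\pgpd\SMon$, i.e. $p\cohto q$. Applying the $2$-functor $C:\pgpd\SMon\to\Cat$ (which sends equal $2$-cells to equal natural transformations, hence equal components) yields $f=g$ in $C$. Since $f$ and $g$ were arbitrary parallel structural morphisms out of a tensor of distinct objects, every such diagram commutes. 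One small point to spell out: a ``diagram'' in the statement is a (finite) graph of structural morphisms, and commutativity means any two directed paths with the same endpoints agree; it suffices to treat the case of two parallel composites, which is what the above does, and the general diagram case follows since any two paths from the distinguished $0$-source to a fixed vertex are parallel structural morphisms and hence equal.

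**Where the work actually lies.** Essentially all the mathematical content has been front-loaded into \cref{smon-affine-coh} and its supporting chain (\cref{smon-completion}, \cref{P-P'}, \cref{P'-P''}, \cref{affine-coh}), so the proof of \cref{smoncat-strong-coh} itself is short. The one genuine thing to be careful about is the translation between the syntactic statement (``at most one $2$-cell $t\To u$ for $t$ affine'') and the semantic statement (``every diagram of distinct objects commutes in every symmetric monoidal category''): I need to make sure that \emph{every} structural morphism out of $x_1\otimes\cdots\otimes x_n$ really does arise as the image of a $2$-cell of $\pgpd\SMon$ whose $1$-source is this particular affine term, and that distinctness of the $x_i$ is not merely sufficient but is used precisely to guarantee affineness of that $1$-source. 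That is the only place a reader might want a sentence of justification; the rest is a direct appeal to the already-established coherence of $\pgpd\SMon$ on affine $1$-cells together with functoriality of the algebra $C$.
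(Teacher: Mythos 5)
Your proposal is correct and follows exactly the same route as the paper's proof: view the symmetric monoidal category as a product-preserving $2$-functor $\pgpd\SMon\to\Cat$ via \cref{smoncat-alg}, observe that a diagram whose $0$-source is a tensor of distinct objects corresponds to parallel $2$-cells out of an affine term, and conclude by \cref{smon-affine-coh} and functoriality. Your extra remarks on why distinctness yields affineness and on reducing a general diagram to pairs of parallel composites are sound elaborations of points the paper leaves implicit.
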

\begin{proof}
  Fix a symmetric monoidal category~$\C$. By \cref{smoncat-alg}, $\C$ can be seen as a product preserving 2-functor $\pgpd\SMon\to\Cat$. A coherence diagram in~$\C$ thus corresponds to a pair of rewriting paths $p,q:t\Pathto u$ in~$\freecat\SMon_2$ for some terms~$t$ and~$u$. When the 0-source of the coherence diagram is a tensor product of distinct objects, $t$ affine, and we have $p=q$ in $\pgpd\SMon$ by \cref{smon-affine-coh}.
\end{proof}

Previous theorem essentially follows from \cref{affine-coh}, which identifies a class of 1-cells of~$\pgpd\SMon$, the affine ones, between which there is at most one 2-cell. The following proposition characterizes, among them, those between which there actually exists a 2-cell.

\begin{proposition}
  \label{affine-2-cells}
  Given two affine terms~$t$ and~$u$, there exists a rewriting zig-zag $p:t\Zzto u$ if and only if~$t$ and~$u$ have the same variables.
\end{proposition}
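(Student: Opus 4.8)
The plan is to treat the two implications separately: the \emph{only if} direction is an invariance argument, and the \emph{if} direction a normalization argument. For \emph{only if}, I would first observe that each of the four rewriting rules $\alpha$, $\lambda$, $\rho$, $\gamma$ of~$\SMon$ has the property that its source and its target involve exactly the same set of variables (namely $\{x_1,x_2,x_3\}$ for~$\alpha$, $\{x_1,x_2\}$ for~$\gamma$, and $\{x_1\}$ for~$\lambda$ and~$\rho$). Consequently, for any rewriting step $C[r\circ f]\colon C[s\circ f]\To C[s'\circ f]$ (with $r\colon s\To s'$ a rule), the terms $C[s\circ f]$ and $C[s'\circ f]$ involve the same variables: those of~$C$ outside the hole, together with $\bigcup\{\vars{f_j}\mid x_j\in\vars s\}=\bigcup\{\vars{f_j}\mid x_j\in\vars{s'}\}$. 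The same holds for a backward step, so by induction on length any zig-zag $p\colon t\Zzto u$ forces $t$ and~$u$ to involve the same variables; since $t$ and~$u$ are affine, this is the same as having the same variables with multiplicity.

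For \emph{if}, suppose $t$ and~$u$ are affine with the same variable set~$V$. First I would, using only~$\lambda$ and~$\rho$ and a routine induction on term structure (applying $\lambda$ or~$\rho$ at a subterm of the form $m(e,v)$ or $m(v,e)$), rewrite $t$ to a term~$t_0$ in which the unit~$e$ does not occur, unless $V=\emptyset$, in which case~$t$ rewrites all the way to~$e$; likewise $u$ to~$u_0$. If $V=\emptyset$ then $t_0=u_0=e$ and we are done. Otherwise $t_0$ and~$u_0$ are $e$-free affine terms with variable set~$V$, and it suffices to exhibit, for each such term, a rewriting zig-zag to the \emph{canonical right comb} $c_V=m(x_{i_1},m(x_{i_2},m(\dots,x_{i_k})\dots))$ listing the elements of~$V$ in increasing order of index, since then concatenating $t\Pathto t_0\Zzto c_V$ with the reversed zig-zag of $u\Pathto u_0\Zzto c_V$ yields a zig-zag $t\Zzto u$.

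Reaching $c_V$ from an $e$-free affine term proceeds in two stages. Using~$\alpha$ inside contexts (a straightforward induction that repeatedly pushes~$\alpha$ through a leftmost bracket) one obtains a right comb with the \emph{same} left-to-right list of variables. It then remains to permute the leaves of this comb into sorted order; since adjacent transpositions generate the symmetric group on the positions $\{1,\dots,k\}$, it is enough to realize the swap of two adjacent leaves by a zig-zag between right combs, and this is done inside the appropriate context either by a single application of~$\gamma$ (for the innermost pair) or, for a pair $y$ at position $p<k-1$ and its successor $z$ followed by the rest $r$, by the three-step zig-zag in which $m(y,m(z,r))$ rewrites backward by~$\alpha$ to $m(m(y,z),r)$, then forward by~$\gamma$ to $m(m(z,y),r)$, then forward by~$\alpha$ to $m(z,m(y,r))$; equivalently, one may use a single~$\delta$-step in the variant~$\SMon'$ of the previous section (a $\delta$-step is anyway a composite of $\alpha^{\pm}$ and $\gamma$ steps). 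Composing finitely many such elementary zig-zags along any sorting sequence gives $t_0\Zzto c_V$, and symmetrically $u_0\Zzto c_V$.

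The argument is essentially routine; the one point that deserves care is this last combinatorial step — checking that an arbitrary permutation of the leaves of a right comb is realizable by a zig-zag of~$\SMon$ — which I reduce to adjacent transpositions and then to the elementary three-step manipulation above. A more economical alternative for the \emph{if} direction is to invoke the convergence of~$\P''$ on affine terms (\cref{P''-confl}): by \cref{affine-coh} its normal forms are unique and reached by rewriting paths, and one checks that they are exactly the sorted right combs (for a term with at least one variable) or~$e$ (otherwise), hence depend only on the variable set; thus $t_0$ and~$u_0$ have equal normal forms, and the required zig-zag is obtained by concatenating a reduction of~$t$ with the reversed zig-zag of a reduction of~$u$.
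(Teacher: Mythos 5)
Your proposal is correct and follows essentially the same route as the paper: the forward direction is the same invariance check that every rule of $\SMon$ preserves the set of variables, and the backward direction likewise normalizes both terms to right combs with respect to $W=\set{\alpha,\lambda,\rho}$ and then realizes the connecting permutation as a composite of adjacent transpositions implemented by $\gamma$ and $\delta$ (or, as you correctly note, by the three-step $\alpha^-$, $\gamma$, $\alpha$ zig-zag in $\SMon$ itself). Your extra care about the empty-variable case and the remark that $\delta$ unfolds to $\alpha^{\pm}$ and $\gamma$ are welcome but do not change the argument.
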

\begin{proof}
  The left-to-right implication can be proved by checking that all the rewriting rules of~$\SMon$ preserve the variables of rewritten terms.
  Now, consider the right-to-left implication and suppose that $t$ and~$u$ are two terms with the same variables. By \cref{ex:mon-term,normal-subcat-equiv}, we can suppose that~$t$ and~$u$ are in normal form with respect to the rules $W=\set{\alpha,\lambda,\rho}$, and thus respectively of the form
  \begin{align*}
    t&=m(x_0,m(x_1,m(\ldots,m(x_{n-2},x_{n-1}))))
    \\
    u&=m(x_{f(0)},m(x_{f(1)},m(\ldots,m(x_{f(n-2)},x_{f(n-1)}))))
  \end{align*}
  (up to renaming the variables of~$t$) for some bijection $f:\intset{n}\to\intset{n}$ with $\intset{n}=\set{0,\ldots,n-1}$. From the well-known fact that any such bijection can be obtained as a composite of adjacent transpositions, it is easy to construct a 2-cell $t\Pathto u$ as a composite of rewriting steps $\gamma$ or $\delta$ corresponding to those transpositions. For instance, with $t=m(x_0,m(x_1,x_2))$ and $u=m(x_2,m(x_1,x_0))$, the bijection $f:[3]\to[3]$ is defined by $f(0)=2$, $f(1)=1$ and $f(0)=0$, which can be obtained as the composite of adjacent transpositions $(01)$ and $(12)$, and the corresponding 2-cell is
  \[
    \begin{tikzcd}[arrows={Rightarrow},column sep=15ex]
      m(x_0,m(x_1,x_2))\ar[r,"{\delta(x_0,x_1,x_2)}"]&m(x_1,m(x_0,x_2))\ar[r,"{m(x_1,\gamma(x_0,x_2))}"]&m(x_2,m(x_1,x_0))
    \end{tikzcd} \qedhere
  \]
\end{proof}

\subsection{General coherence}
Finally, we explain how to recover the more general coherence theorem for symmetric monoidal categories. Given a natural number~$n$, we write $\intset{n}=\set{0,\ldots,n-1}$ for the cardinal with $n$ elements.

We define a Lawvere $2$-theory $\mathcal{S}$ as follows. Its 0-cells are natural numbers, as for any Lawvere theory. Given a 0-cell~$n$, a 1-cell $f:n\to 1$ is a list $[f^1,f^2,\ldots,f^{\sizeof{l}}]$ of elements of~$\intset{n}$, sometimes being referred to as \emph{colors}, the natural number $\sizeof{f}$ being the \emph{length} of the list. Since a Lawvere 2-theory is cartesian, we more generally have that a morphism $n\to m$ is a tuple $\tuple{f_0,\ldots,f_{m-1}}$ of $m$ 1-cells $f_i:n\to 1$. Given 1-cells $f:m\to n$ and $g:n\to 1$, their composition is the list computed by taking the list $g$, replacing each color $g^i$ by $f_i$ so that we obtain a list of lists, and then flattening the resulting list, \ie taking the concatenation of all the lists. For instance, we have the composite
\[
  \begin{tikzcd}
    4\ar[r,"\tuple{[1,1],[3,3,2],[2,0,3]}"]&[12ex]3\ar[r,"\tuple{[2,0,2]}"]&[3ex]1
  \end{tikzcd}
  \qquad=\qquad
  \begin{tikzcd}
    4\ar[r,"\tuple{2,0,3,1,1,2,0,3}"]&[10ex]1
  \end{tikzcd}
\]
The 2-cells $\alpha:f\To g:n\to 1$ are color-preserving bijections on $n$ elements, \ie bijections $\alpha:\intset{n}\to\intset{n}$ such that $f^{\alpha(i)}=g^i$ for every $i\in\intset{n}$. More generally, 2-cells $\alpha:f\To g:m\to 1$ are tuples $\tuple{\alpha^0,\ldots,\alpha^{m-1}}$ of 2-cells $\alpha^i:n\to 1$ for $i\in\intset{m}$. The 1-composition of 2-cells is the usual composition of functions, and the 0-composition is left to the reader.

We claim that this 2-theory provides an explicit description of the quotient category for the theory of symmetric monoids, thus establishing a coherence theorem (S2) or (C2). In order to prove this, we will need to relate affine terms to non-affine ones. Informally, every non-affine term can be described as an affine term in which some variables have been identified, and moreover rewriting between non-affine terms can be uniquely lifted to the affine term they originate from. We say that a 1-cell $f:m\to n$ of~$\freecat\SMon_1$ is a \emph{renaming} when it is a tuple of variables, \ie of the form~$\tuple{x_{i_0},\ldots,x_{i_{k-1}}}$.

\begin{lemma}
  \label{smon-affine-lift}
  Consider the \TRS2 $\SMon$. For any term~$t:n\to 1$, there is a natural number~$\tilde n$ and an affine term $\tilde t:\tilde n\to 1$ together with a renaming $f:n\to\tilde n$ such that $t=\tilde t\circ f$:
  \[
    \begin{tikzcd}
      \tilde n\ar[r,dotted,"\tilde t"]&1\ar[d,equals]\\
      n\ar[u,"f"]\ar[r,"t"']&1
    \end{tikzcd}
  \]
  Moreover, for any rewriting zig-zag $p:t\Zzto u$, there is a unique term~$\tilde u$ and rewriting path $p:\tilde t\Zzto\tilde u$ such that $\tilde p\circ f=p$:
  \[
    \begin{tikzcd}
      \tilde n
      \ar[r,bend left,"\tilde t"{name=TT}]
      \ar[r,bend right,dotted,"\tilde u"'{name=UU}]
      &
      1\ar[d,equals]
      \\
      n
      \ar[u,"f"]
      \ar[r,bend left,"t"{name=T}]
      \ar[r,bend right,"u"'{name=U}]
      &1
      \ar[from=T,to=U,Rightarrow,shorten=3pt,"p"']
      \ar[from=TT,to=UU,Rightarrow,dotted,shorten=3pt,"\tilde p"']
    \end{tikzcd}
  \]
  and~$\tilde u$ is affine. Moreover, any equivalence $p\Cohto q$ between $p,q:t\Zzto u$ lifts as an equivalence~$\tilde p\Cohto\tilde q$.
\end{lemma}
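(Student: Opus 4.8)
The plan is to prove the three assertions in turn: the affine lift $\tilde t$ of a term, the existence and uniqueness of the lift $\tilde p$ of a zig‑zag, and the liftability of coherence. For the first, I would induct on the structure of $t$: take $(\tilde n,\tilde t,f)=(1,x_1^1,\tuple{x_i^n})$ for $t=x_i^n$, take $(0,e,\tuple{})$ for $t=e$, and for $t=m(t_1,t_2)$ combine the inductive data, setting $\tilde n=\tilde n_1+\tilde n_2$, letting $\tilde t$ be $m(\tilde t_1,\tilde t_2)$ with the variables of the second component shifted by $\tilde n_1$, and $f$ the concatenation of the two renamings. Then the two components of $\tilde t$ use disjoint variables and each is affine, so $\tilde t$ is affine, and $\tilde t\circ f=m(\tilde t_1\circ f_1,\tilde t_2\circ f_2)=t$; concretely $\tilde t$ is $t$ with its variable leaves relabelled $x_1,\dots,x_{\tilde n}$ from left to right, with $\tilde n=\sum_i\occurrences{i}{t}$.

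For the second assertion the crucial point is that every rewriting rule of $\SMon$, and every reversed rule, is \emph{left‑linear}: the sources $m(m(x_1,x_2),x_3)$, $m(e,x_1)$, $m(x_1,e)$, $m(x_1,x_2)$ of $\alpha,\lambda,\rho,\gamma$ and the targets $m(x_1,m(x_2,x_3))$, $x_1$, $x_1$, $m(x_2,x_1)$ are all affine. By induction on the length of $p$ it suffices to lift a single (possibly reversed) step $\beta=C[\rho\circ g]\colon t\to t'$. The redex of $\beta$ occupies a position $p_0$ of $t$; since $f$ is a renaming, $\tilde t$ has the same underlying tree as $t$ up to relabelling leaves, so position $p_0$ of $\tilde t$ carries a subterm matching the (left‑linear, hence uniquely matched) source of $\rho$, and applying $\rho$ there yields $\tilde t'$ and $\tilde\beta\colon\tilde t\to\tilde t'$ with $\tilde\beta\circ f=\beta$. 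No rule changes the number of variable leaves (the rules $\lambda,\rho$ only delete the unit $e$, which carries no variable), so $\tilde t'$ again has arity $\tilde n$, the \emph{same} renaming $f$ satisfies $\tilde t'\circ f=t'$, and $\tilde t'$ is again affine (reordering leaves or deleting a unit preserves affineness). Uniqueness of $\tilde\beta$ — hence inductively of $\tilde p$ and $\tilde u$ — follows because $\beta$ determines the redex position and the rule, both are unaffected by postcomposition with the renaming $f$, and the matching substitution into $\tilde t$ is then forced by left‑linearity. Iterating gives the required $\tilde p\colon\tilde t\Zzto\tilde u$ with $\tilde u$ affine, unique with $\tilde p\circ f=p$; note also that the step‑by‑step construction makes the lift of a concatenation the concatenation of the lifts.

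For the third assertion, recall that $\Cohto$ is the reflexive–transitive closure of the elementary relation $\Leftrightarrow$, so by induction it suffices to treat one step $p=r_1\pcomp C[\sigma\circ g]\pcomp r_2\Leftrightarrow r_1\pcomp C[\sigma'\circ g]\pcomp r_2=q$ coming from a coherence relation $X\colon\sigma\To\sigma'$ of $\SMon$ (the reversed case being symmetric), where $\sigma,\sigma'\colon w\Pathto w'$ share an affine source term $w$ and affine target term $w'$. One checks that linearisation commutes with contexts and substitutions: linearising the common source term $C[w\circ g]$ of $\sigma$ and $\sigma'$ yields $\tilde C[w\circ\tilde g]$ for a suitable linearised context $\tilde C$ and substitution $\tilde g$. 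All coherence relations $A,B,C,D,E,F,G,I$ of $\SMon$ are \emph{generic}, i.e. every term occurring along $\sigma$ and $\sigma'$ is affine; hence $\tilde C[\sigma\circ\tilde g]$ and $\tilde C[\sigma'\circ\tilde g]$ are paths through affine terms, they satisfy $\tilde C[\sigma\circ\tilde g]\circ f=C[\sigma\circ g]$ and $\tilde C[\sigma'\circ\tilde g]\circ f=C[\sigma'\circ g]$ (using compatibility of the bicontext action with composition, so that the bicontext $(\tilde C,\tilde g)$ composed with the renaming $f$ returns $(C,g)$), and they have common source $\tilde C[w\circ\tilde g]$ and common target $\tilde C[w'\circ\tilde g]$. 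By the uniqueness from the second part, these are exactly the lifts of $C[\sigma\circ g]$ and $C[\sigma'\circ g]$ from the shared lifted source; since the lifts $\tilde r_1$ of $r_1$ and $\tilde r_2$ of $r_2$ are therefore common to $\tilde p$ and $\tilde q$, the coherence relation $\tilde C[X\circ\tilde g]$ gives $\tilde p=\tilde r_1\pcomp\tilde C[\sigma\circ\tilde g]\pcomp\tilde r_2\Leftrightarrow\tilde r_1\pcomp\tilde C[\sigma'\circ\tilde g]\pcomp\tilde r_2=\tilde q$, whence $\tilde p\Cohto\tilde q$.

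The main obstacle I expect is the third part, and within it the bookkeeping establishing that linearisation commutes with contexts and substitutions, and the resulting identification of the unique lift of a coherence relation applied in a context with that coherence relation applied in the corresponding lifted context; this is routine but requires careful tracking of variable names, and it rests on the two facts — verified rule by rule — that every rule of $\SMon$ is left‑linear and that every coherence relation of $\SMon$ runs through affine terms only.
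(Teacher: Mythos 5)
Your proposal is correct and takes essentially the same route as the paper's (much terser) proof: construct $\tilde t$ by relabelling variable occurrences distinctly, lift rewriting steps using the fact that every rule of $\SMon$ has affine source and target, and lift coherence relations using the fact that they run entirely through affine terms. You have simply spelled out in detail the bookkeeping that the paper leaves implicit.
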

\begin{proof}
  Given a term~$t$, we can construct $\tilde t$ by renaming distinctively every occurrence of each variable. For instance, the term~$m(x_1,m(x_0,x_1)):2\to 1$ can be obtained form the affine term $m(x_0,m(x_1,x_2)):3\to 1$ with the renaming $\tuple{x_1,x_0,x_1}:2\to 3$.
  The fact that we can lift rewriting steps follows from the fact that all the rewriting rules in $\SMon_1$ have affine source and targets. And similarly for coherence relations.
\end{proof}

\noindent
The same structure can be identified in $\mathcal{S}$. We say that a 1-cell $f:n\to 1$ is affine, when it consists in a list which does not contain the same element twice. We say that a 1-cell $f:m\to n$ is a \emph{renaming} when it is a tuple of lists of length~1, \ie of the form $\tuple{[i_0],\ldots,[i_{k-1}]}$.

\begin{lemma}
  \label{S-affine-lift}
  Consider the Lawvere 2-theory $\mathcal{S}$. For every 1-cell $t:n\to 1$ there is an affine 1-cell $\tilde t:n\to 1$ together with a renaming $f:n\to\tilde n$ such that $t=\tilde t\circ f$, moreover any 2-cell $p:t\To u$ lifts uniquely as a 2-cell $\tilde p:\tilde t\To\tilde u$ such that $\tilde p\circ f=f$.
\end{lemma}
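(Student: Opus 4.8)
The plan is to carry out the explicit construction underlying the statement, in direct analogy with the proof of \cref{smon-affine-lift}, but using the concrete combinatorial description of~$\mathcal{S}$, where the role played there by occurrences of variables in a term is played here by positions in a list.

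First I would handle the $1$-cells. Given $t=(t_0,\dots,t_{\ell-1}):n\to 1$ with $\ell=\sizeof{t}$, I set $\tilde n:=\ell$, take $\tilde t:=(0,1,\dots,\ell-1):\tilde n\to 1$ (the ``identity list'', which is affine since its colors are pairwise distinct), and let $f:n\to\tilde n$ be the renaming $\tuple{[t_0],\dots,[t_{\ell-1}]}$. Unfolding the composition recipe of~$\mathcal{S}$, the composite $\tilde t\circ f$ is obtained by replacing at each position~$i$ the color $\tilde t_i=i$ by the singleton $f_i=[t_i]$ and flattening, which gives back $(t_0,\dots,t_{\ell-1})=t$; hence $t=\tilde t\circ f$.

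Next the $2$-cells. Recall that a $2$-cell $p:t\To u$ is a color-preserving bijection $\sigma$ of the common set of positions of~$t$ and~$u$, \ie a bijection $\sigma:\intset{\ell}\to\intset{\ell}$ with $t_{\sigma(i)}=u_i$ (so in particular $\sizeof{u}=\ell$). I would set $\tilde p:=\sigma$ and $\tilde u:=(\sigma(0),\dots,\sigma(\ell-1))$. Then $\tilde u$ is affine because $\sigma$ is a bijection; $\tilde p$ is a legitimate $2$-cell $\tilde t\To\tilde u$, since the condition $\tilde t_{\tilde p(i)}=\tilde u_i$ reads $\sigma(i)=\sigma(i)$, the list $\tilde t$ being the identity; and unfolding the composition recipe gives $\tilde u\circ f=(t_{\sigma(0)},\dots,t_{\sigma(\ell-1)})=(u_0,\dots,u_{\ell-1})=u$, so the source and target of $\tilde p\circ f$ are $t$ and $u$. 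To see that $\tilde p\circ f=p$, the point is that since $f$ is a renaming the operation $-\circ f$ merely relabels colors, preserving length and positions, and hence leaves the underlying permutation of any $2$-cell unchanged; so the permutation underlying $\tilde p\circ f$ is again $\sigma$, \ie $\tilde p\circ f=p$. For uniqueness, if $\tilde p':\tilde t\To\tilde u'$ also satisfies $\tilde p'\circ f=p$, the same observation forces the permutation underlying $\tilde p'$ to be $\sigma$, whence $\tilde p'=\sigma=\tilde p$ and $\tilde u'_i=\tilde t_{\tilde p'(i)}=\sigma(i)=\tilde u_i$, so $\tilde u'=\tilde u$.

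The only slightly delicate step, and the one I would write out carefully, is the behavior of $0$-composition against renamings: one must check that right-whiskering a $2$-cell by a renaming acts as the identity on its underlying permutation — precisely what would fail for a general $1$-cell, which may duplicate or merge colors — and it is this fact that makes the lift both well defined and unique. Everything else is routine bookkeeping against the composition formula of~$\mathcal{S}$.
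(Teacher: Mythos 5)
Your proposal is correct and follows essentially the same construction as the paper: $\tilde t$ is the identity list $[0,\dots,\ell-1]$, $f$ the tuple of singletons $\tuple{[t_0],\dots,[t_{\ell-1}]}$, and the lift of a $2$-cell is the same underlying bijection, with $\tilde u=[\sigma(0),\dots,\sigma(\ell-1)]$. You merely spell out the verification (in particular that whiskering by a renaming preserves the underlying permutation, since the substituted blocks all have length one) which the paper dismisses as ``easily shown''.
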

\begin{proof}
  Given a 1-cell $t:n\to 1$, \ie a list $[i_0,i_1,\ldots,i_{k-1}]$ with values in $\intset{n}$, we can take $\tilde n=k$ and $\tilde t=[0,1,\ldots,k-1]:k\to 1$ together with the renaming $f=\tuple{[i_0],[i_1],\ldots,[i_{k-1}]}:n\to k$. Given a morphism $p:t\To u$, \ie a color-preserving bijection $p:\intset{k}\to\intset{k}$, it is easily shown that the same bijection provides a suitable 2-cell $t\To u$, with $u=[p(0),p(1),\ldots,p(k-1)]$, and that this is the only possible one.
\end{proof}

\noindent
We can finally show the announced coherence theorem (C2) for symmetric monoidal categories:

\begin{theorem}
  \label{smc-S2}
  We have an isomorphism of Lawvere 2-theories $\pgpd\SMon/W\isoto\mathcal{S}$.
\end{theorem}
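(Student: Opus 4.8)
The plan is to construct an explicit product-preserving $2$-functor $\Phi\colon\pgpd\SMon\to\mathcal S$, to check that it is $W$-strict, and then to prove that the induced $2$-functor $\bar\Phi\colon\pgpd\SMon/W\to\mathcal S$ is bijective on $0$-, $1$- and $2$-cells, hence an isomorphism of Lawvere $2$-theories. To define $\Phi$ I would use the universal property of the presented theory: it suffices to send the symbols $m,e$ to $1$-cells and the rules $\alpha,\lambda,\rho,\gamma$ to $2$-cells in such a way that the coherence relations of $\SMon$ become equalities. I take $\Phi(m)=[0,1]\colon 2\to 1$, $\Phi(e)=[\,]\colon 0\to 1$, $\Phi(\alpha)=\Phi(\lambda)=\Phi(\rho)=\id$, and $\Phi(\gamma)$ the transposition $[0,1]\To[1,0]$. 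With these choices a term $t\colon n\to 1$ is sent to the list $\vars t$ of its variable occurrences read from left to right (reindexed from $\set{x_1,\dots,x_n}$ to $\intset n$), and a rewriting zig-zag is sent to the permutation of occurrences that it induces.

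To see that $\Phi$ is well defined I would note that every coherence relation of $\SMon$ — that is, $A$, $B$, $C$, $D$, $E$ inherited from $\Mon$ together with $F$, $G$, $I$ — has source and target which are affine terms sharing the same variables. Between two affine $1$-cells the category $\mathcal S(n,1)$ has at most one morphism, since a color-preserving bijection out of a list of distinct colors is uniquely determined; hence the two composite $2$-cells that such a relation would constrain must coincide, and $\Phi$ descends to $\pgpd\SMon$. By construction $\Phi$ preserves products and sends the generators $\alpha,\lambda,\rho$ of $W$ to identities, so it is $W$-strict and factors uniquely through the quotient as $\bar\Phi\colon\pgpd\SMon/W\to\mathcal S$.

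Next I would handle the $0$- and $1$-cells. On objects $\bar\Phi$ is the identity. Since $\SMon$ is $W$-convergent (it is $W$-terminating by \cref{ex:mon-term} and locally $W$-confluent by the critical pairs of \cref{ex:rs-mon}, hence $W$-confluent by \cref{prop:lc+n-c2}), \cref{prop:quot-nf} applied hom-wise identifies $(\pgpd\SMon/W)(n,1)$ with the full subcategory of $\pgpd\SMon(n,1)$ on $W$-normal forms. These normal forms are precisely $e$ together with the right-combs $m(x_{i_0},m(x_{i_1},m(\dots,x_{i_{k-1}})))$ of variables with $k\geq 1$, which are therefore in bijection with the lists over $\intset n$, i.e.\ with the $1$-cells $n\to 1$ of $\mathcal S$; moreover in both theories the composite of two such $1$-cells is computed by substitution followed by flattening, so $\bar\Phi$ is a bijection on $1$-cells $n\to 1$, hence on all $1$-cells by cartesianness.

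It remains to show that $\bar\Phi$ is bijective on $2$-cells, which is the crux. Fix $W$-normal forms $t,u\colon n\to 1$; by \cref{prop:quot-nf} the $2$-cells $t\To u$ in $\pgpd\SMon/W$ are the same as those in $\pgpd\SMon$. Let $\tilde t\colon\tilde n\to 1$ and the renaming $f\colon n\to\tilde n$ be the affine data of \cref{smon-affine-lift}. That lemma expresses the set of $2$-cells $t\To u$ in $\pgpd\SMon$ as a disjoint union, indexed by the affine terms $\tilde u$ with $\tilde u\circ f=u$, of the sets of $2$-cells $\tilde t\To\tilde u$; each of these is a singleton, since $\tilde t$ and $\tilde u$ are then affine with the same variables (\cref{affine-2-cells} gives existence, \cref{smon-affine-coh} uniqueness, both resting ultimately on \cref{affine-coh}). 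Symmetrically, \cref{S-affine-lift} decomposes the set of $2$-cells $\Phi t\To\Phi u$ in $\mathcal S$ as a disjoint union over the affine lifts of $\Phi u$, each piece again a singleton. Since $\Phi$ restricts to a bijection between affine $W$-normal forms and affine $1$-cells of $\mathcal S$ and satisfies $\Phi(\tilde u\circ f)=\Phi\tilde u\circ\Phi f$, it puts the two indexing sets in bijection, and functoriality of $\Phi$ makes the square relating the two decompositions commute; therefore $\bar\Phi$ is a bijection on $2$-cells. Combining this with the bijections on $0$- and $1$-cells yields the isomorphism $\pgpd\SMon/W\isoto\mathcal S$. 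The main obstacle is exactly this last step: it requires the nontrivial coherence input \cref{affine-coh} (available only through the auxiliary rewriting systems $\SMon'$ and $\P''$) together with both affine-lifting lemmas, plus some care in tracking variable occurrences against colors; the remaining verifications are routine.
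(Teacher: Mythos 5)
Your proof is correct and follows essentially the same route as the paper: identification of $1$-cells via $W$-normal forms and \cref{prop:quot-nf}, uniqueness and existence of $2$-cells between affine terms via \cref{affine-2-cells} and the coherence result, and the reduction of general $2$-cells to affine ones via the two lifting lemmas \cref{smon-affine-lift} and \cref{S-affine-lift}. The only difference is that you make the comparison functor $\Phi$ explicit via the universal property of the presentation (which the paper leaves implicit), a welcome extra precision rather than a divergence.
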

\begin{proof}
  The two 2-theories have the same 0-cells: the natural numbers, as any Lawvere theory. Since $W$ forms a rewriting system, the 1-cells of $\pgpd\SMon/W$ can be identified to the 1-cells $n\to 1$ of $\pgpd\SMon$ which are in normal form with respect to the rules of~$W$, \ie terms of the form
  \[
    m(x_{i_0},m(x_{i_1},m(\ldots,m(x_{i_{k-2}},x_{i_{k-1}}))))
  \]
  for some $k\in\N$ with $i_j\in\intset{n}$ for $j\in\intset{k}$, and those are clearly in bijection with lists
  \[
    [i_0,i_2,\ldots,i_{n-2},i_{k-1}]
  \]
  of elements of~$\intset{n}$, and the compositions coincide in both categories.
  Given two terms $t,u:n\to 1$, the category $\pgpd\SMon/W(t,u)$ can be identified with $\pgpd\SMon(\nf t,\nf u)$ by \cref{prop:quot-nf}, where $\nf t$ is the normal form of $t$ with respect to the rules of~$W$ and similarly for~$\nf u$. In the following, we implicitly assume that the 1-cells we consider are in normal form and make this identification.
  Given two terms $t,u:n\to 1$, with $t$ affine, we know by \cref{affine-2-cells} that there is at most one 2-cell $t\To u$ in $\pgpd\SMon/W$, and this is the case precisely when $t$ and $u$ have the same variables. If we consider the corresponding lists, still written~$t$ and~$u$, in $\mathcal{S}$, we can observe that there is most one color-preserving bijection between the two lists (because they do not contain the same color twice) and this is the case precisely when they have the same colors. We thus have a bijection between the sets of 2-cells $\pgpd\SMon/W(t,u)\isoto\mathcal{S}(t,u)$ when $t$ and $u$ are affine.
  Finally, given two arbitrary 1-cells~$t,u:n\to 1$, by \cref{smon-affine-lift} the set of 2-cells $\pgpd\SMon/W(t,u)$ can be described as
  \[
    \pgpd\SMon/W(t,u)
    =
    \bigsqcup_{\substack{u':n\to 1\\\text{$u'$ affine}}}\setof{p:t\To u'}{u'\circ f=u}
  \]
  Similarly, by \cref{S-affine-lift} the set of 2-cells $\mathcal{S}(t,u)$ can be described as
  \[
    \mathcal{S}(t,u)
    =
    \bigsqcup_{\substack{u':n\to 1\\\text{$u'$ affine}}}\setof{p:t\To u'}{u'\circ f=u}
  \]
  Those sets are isomorphic as disjoint unions of 2-cells whose source is affine, and the bijection can be checked to be compatible with composition.
\end{proof}

\noindent
Alternatively, the proof could be done by constructing explicitly a presentation of the Lawvere 2-theory $\mathcal{S}$, following previous work in the setting of monoidal categories~\cite{guiraud2012coherence,lafont2003towards}. The proof scheme used above however has the advantage of being simpler to check, and we believe that it is quite general; in particular, extensions to the study of coherence of fundamental structures such as cartesian closed categories should be developed in subsequent works.





\section{Future works}
\label{sec:conclusion}
We claim that the framework developed here applies to a wide variety of algebraic structures, which will be explored in subsequent work. In fact, the full generality of the framework was not needed for (symmetric) monoidal categories, since the rules of the corresponding theory never need to duplicate or erase variables (and, in fact, those can be handled by traditional polygraphs~\cite{guiraud2012coherence,lafont2003towards}). This is however, needed for the case of rig categories~\cite{laplaza1972coherence}, which feature two monoidal structures $\oplus$ and $\otimes$, and natural isomorphisms such as
\[
  \delta_{x,y,z}:x\otimes(y\oplus z)\to(x\otimes y)\oplus(x\otimes z)
\]
where $x$ occurs twice in the target, generalizing the laws for rings. Those were a motivating example for this work, and we will develop elsewhere a proof of coherence of those structures based on our rewriting framework, as well as related approaches on the subject~\cite{bonchi2022tape,bonchi2023deconstructing}.


The importance of the notion of polygraph can be explained by the fact that they are the cofibrant objects in a model structure on $\omega$-categories~\cite{polygraphs,lafont2010folk}. It would be interesting to develop a similar point of view for higher-dimensional term rewriting systems: a first step in this direction is the model structure developed in~\cite{yanofsky2001coherence}.

\bibliographystyle{alphaurl}
\bibliography{papers}

\end{document}